\setlist[enumerate,1]{label=(\alph*), font=\normalfont}
\setlist[enumerate,2]{label=(\roman*), font=\normalfont,, ref=\theenumi(\roman*)}% ensures that enumerate in theorems is not in italics
\numberwithin{equation}{section}
\theoremstyle{plain}
\newtheorem{theorem}{Theorem}[section]
\newtheorem{lemma}[theorem]{Lemma}
\newtheorem{proposition}[theorem]{Proposition}
\newtheorem{corollary}[theorem]{Corollary}
\newtheorem{conjecture}[theorem]{Conjecture}
\newtheorem*{KitaokaConj}{Kitaoka's Conjecture}
\theoremstyle{definition}
\newtheorem{definition}[theorem]{Definition}
\newtheorem{remark}[theorem]{Remark}
\newtheorem{example}[theorem]{Example}
\newtheorem*{convention}{Convention}
\newtheorem*{remarkunnum}{Remark}
\DeclareMathOperator{\sign}{sgn}
\DeclareMathOperator{\disc}{disc}
\DeclareMathOperator{\rank}{rank}
\newcommand{\R}{\mathbb{R}}
\newcommand{\Z}{\mathbb{Z}}
\newcommand{\Q}{\mathbb{Q}}
\newcommand{\vct}[1]{\bm{#1}} % vector
\newcommand{\e}{\vct{e}}
\newcommand{\f}{\vct{f}}
\newcommand{\vu}{\vct{u}}
\newcommand{\vv}{\vct{v}}
\newcommand{\vw}{\vct{w}}
\newcommand{\vx}{\vct{x}}
\newcommand{\ip}{\mathfrak{p}}
\newcommand{\ia}{\mathfrak{a}}
\newcommand{\ms}{\mathfrak{s}} %signature
\newcommand{\ve}{\varepsilon}
\newcommand{\abs}[1]{\left|#1\right|}
\newcommand{\co}{\mathcal{O}}
\newcommand{\U}{\mathcal{U}}
\newcommand{\UK}[1][K]{\U_{#1}}
\newcommand{\UKPlus}[1][K]{\UK[{#1}]^+}
\newcommand{\UKctv}[1][K]{\UK[{#1}]^2}
\newcommand{\OK}[1][K]{\co_{{\:\!\! #1}}}
\newcommand{\OKPlus}[1][K]{\OK[#1]^+}
\newcommand{\OKctv}[1][K]{\OK[#1]^2}
\newcommand{\qf}[1]{\langle#1\rangle} % diagonal quadratic form
\newcommand{\dual}[1]{#1^{\#}}
\newcommand{\Tr}[2]{\mathrm{Tr}_{#1}\left(#2\right)} % trace
\newcommand{\norm}[2]{\mathrm{N}_{#1}\left(#2\right)} % norm
\def\house#1{{%
		\setbox0=\hbox{$#1$}
		\vrule height \dimexpr\ht0+1.4pt width .4pt depth \dp0\relax
		\vrule height \dimexpr\ht0+1.4pt width \dimexpr\wd0+2pt depth \dimexpr-\ht0-1pt\relax
		\llap{$#1$\kern1pt}
		\vrule height \dimexpr\ht0+1.4pt width .4pt depth \dp0\relax
}}
\newcommand{\anglematrix}[3]{\bigl\langle \begin{smallmatrix} #1 & #2 \\ #2 & #3 \end{smallmatrix}\bigr\rangle}
\newcommand{\iks}{x}
\newcommand{\ips}{y}
\newcommand{\iss}{z}
\newcommand{\ifs}{w}
\newcommand{\its}{t}
\title[Non-universality of ternary quadratic forms over fields containing $\sqrt2$]{Non-universality of ternary quadratic forms over fields containing $\mathbf{\sqrt2}$}
\author{Krist\'yna Kramer}
\address{Charles University, Faculty of Mathematics and Physics, Department of Algebra, Sokolov\-sk\' a~83, 18600 Praha~8, Czech Republic}
\email{kristyna.kramer@gmail.com}
\author{Jakub Kr\'asensk\'y}
\address{Czech Technical University in Prague, Faculty of Information Technology, Department of Applied Mathematics, Thákurova~9, 16000 Praha~6, Czech Republic}
\email{jakub.krasensky@fit.cvut.cz}
\thanks{K.K. was supported by Charles University project PRIMUS/25/SCI/008.}
\keywords{universal quadratic form, quadratic lattice, ternary quadratic form, quartic number field, totally real, Kitaoka's Conjecture, lifting problem, universality criterion}
\subjclass[2020]{11E12, 11E20, 11R04, 11R16, 11R80}
\begin{document}
	
	\begin{abstract}
		We prove Kitaoka's conjecture for all totally real number fields of degree $4$ -- namely, there is no positive definite classical quadratic form in three variables which is universal. To achieve this, we study the fields (often without restricting the degree) where $2$ is a square, because in this arguably most difficult case, the recent results connecting Kitaoka's conjecture to sums of integral squares do not apply. We also prove some other properties of ternary quadratic forms over fields containing $\sqrt2$, for example in relation to the lifting problems for universal quadratic forms and for criterion sets.
	\end{abstract}
	
	\maketitle
	
	%=====================================================
	\section{Introduction}

	Lagrange's four-square theorem states that all positive integers can be written as a sum of four squares. This is considerably easier to prove than Legendre's three-square theorem (a positive integer is a sum of three squares if and only if it is not of the form $4^a(8k+7)$). In general, ternary quadratic forms tend to be the most difficult to understand -- one example of a very hard open problem is Kaplansky's conjecture that $\iks^2+2\ips^2+5\iss^2+\iks \iss$ represents all odd positive integers (proven by Rouse \cite{Rouse} under GRH).
	
	Among the big modern achievements in the theory of positive definite integral quadratic forms are the $15$-theorem of Conway--Schneeberger and the $290$-theorem of Bhargava--Hanke, which fully characterise the \emph{universal} quadratic forms, i.e., those that represent all positive integers (like the sum of four squares). A much older observation, following from the properties of Hilbert symbol, is that a positive definite quadratic form in three variables over $\Z$ cannot be universal.
	
	The situation changes if we switch to quadratic forms over number fields. All totally positive algebraic integers in $\Q(\!\sqrt5)$ are a sum of three integral squares in this field \cite{Ma}. This is the earliest and simplest example of a universal ternary quadratic form over a totally real number field. Such quadratic forms are rare, as predicted by the following influential conjecture.
	
	\begin{KitaokaConj}
		There are only finitely many totally real number fields that admit a universal ternary classical quadratic form.
	\end{KitaokaConj}
	
	Before we proceed with the historical overview, let us briefly explain the notions used in the conjecture. (For proper and complete definitions, see Section~\ref{se:prelims} or the standard book \cite{OMbook}.) Let $K$ be a totally real number field, $\OK$ its ring of integers and $\OKPlus\subset\OK$ the set of totally positive algebraic integers in $K$ (i.e., the algebraic integers that are positive in all embeddings $K\hookrightarrow \R$). An \emph{$n$-ary quadratic form} $Q$ is a homogeneous polynomial over $K$ of degree $2$ in $n$ variables. It is \emph{universal} if $Q\bigl(\OK^{\;\! n} \setminus \{\mathbf{0}\}\bigr)=\OKPlus$. Further, it is \emph{classical} if the corresponding symmetric matrix has entries in $\OK$; in particular, the coefficients of all mixed terms lie in $2\OK$. 
	
	There are many interesting results about universal quadratic forms and related topics; we refer the reader to the surveys \cite{Kim-survey,Kala-survey}. Here we focus on the results that directly concern Kitaoka's conjecture. 
	
	Chan--Kim--Raghavan \cite{CKR} proved that a real quadratic field admits a universal ternary classical quadratic form if and only if it is one of $\Q(\!\sqrt2)$, $\Q(\!\sqrt3)$, and $\Q(\!\sqrt5)$. Moreover, they provided a full list of such forms over these three fields (for $\Q(\!\sqrt2)$, we give this list in \eqref{eq:ListQFs2}). Recently, this result has been extended to not-necessarily-classical universal quadratic forms by Kala--Krásenský--Park--Yatsyna--\.{Z}mija \cite{KK+}; there are at most $11$ real quadratic fields admitting such a ternary quadratic form. Moreover, for each of these fields, an explicit probably universal quadratic form was given, but proving their universality is possibly very hard. Kim--Kim--Park \cite{KKP} proved that there are in fact only finitely many $\Q(\!\sqrt{n})$ admitting a universal $7$-ary quadratic form (later, \cite{KK+} provided an improved upper bound on $n$). This is complemented by Kim's construction of infinitely many real quadratic fields with a universal $8$-ary classical quadratic form \cite{Kim-octonary}. 
	
	It is important to bear in mind that by Earnest--Khosravani \cite{EK}, a universal ternary quadratic form can only exist in number fields of even degree (a generalisation of the fact that three variables do not suffice over $\Q$). Thus, after the solution for quadratic fields, number fields of degree $4$ are of particular interest. By Krásenský--Tinková--Zemková\footnote{The former name of the first author.} \cite{KTZ}, no real biquadratic field admits a universal ternary classical quadratic form. On the other hand, Krásenský--Scharlau announced the existence of a non-classical universal ternary quadratic form over the biquadratic field $\Q(\!\sqrt2,\sqrt5)$ as well as over another quartic field, $\Q(\zeta_{20}^{}+\zeta_{20}^{-1})$. 
	
	Beside these results in low-degree fields, there are two results valid for fields of all degrees. Kala--Yatsyna \cite{KY-weak} proved the \enquote{Weak Kitaoka's conjecture}: \emph{For any fixed $d$, there are only finitely many number fields of degree $d$ admitting a universal ternary quadratic form.} Kala--Kramer--Krásenský \cite{KKK} solves Kitaoka's conjecture for fields of odd discriminant (namely, $\Q(\!\sqrt{5})$ is the only field with a universal ternary classical quadratic form where $2$ is unramified) and also establishes a surprising connection with sums of squares: If $K \not\ni \sqrt2$ admits a universal ternary classical quadratic form, then $2\OKPlus\subset\sum^4\square$.
	
	\medskip
	
	Many of these results are in fact formulated and proven in a more general setting, namely for \emph{quadratic lattices} instead of \emph{quadratic forms}. If $K$ has class number $1$, then these two notions are equivalent, while in general, a quadratic lattice needs not correspond to a quadratic form. For example, \cite{KK+} actually proves that there are at most $13$ real quadratic fields admitting a universal ternary lattice; but the only candidates for universality over $\Q(\!\sqrt{10})$ and $\Q(\!\sqrt{65})$ are non-free and hence do not yield a universal quadratic form. From the other results, \cite{KKP,EK,KY-weak} and some parts of \cite{KKK} hold for all lattices, while the techniques used in \cite{CKR,KTZ} and other parts of \cite{KKK} require the more restricted setting of quadratic forms.
	
	\medskip
	
	In this article, we resolve Kitaoka's conjecture for all quartic fields. 
	
	\begin{theorem}\label{th:QuarticKitaoka2}
		Let $K$ be a totally real number field with $[K:\Q]=4$. Then there is no universal ternary classical quadratic form over $K$.
	\end{theorem}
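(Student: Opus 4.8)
My plan is to disprove universality by producing a single totally positive integer $\alpha\in\OKPlus$ that $Q$ fails to represent, organising the search around the dyadic primes and using the standard two-step local-to-global framework for ternary lattices. The primary tool is a \emph{local-integral} obstruction: if some $\alpha\in\OKPlus$ is not represented by $Q$ over the completion $\co_\ip$ for a prime $\ip\mid2$, then it is not represented over $K$. (This is the mechanism behind the classical failure of $x^2+y^2+z^2$ over $\Z$, where $7$ is already not a sum of three squares $2$-adically, even though the ternary space is isotropic over $\Q_2$.) For the residual forms that represent every totally positive integer integrally at all primes -- i.e.\ that represent all of $\OKPlus$ by their genus -- I would fall back on a \emph{spinor-genus} obstruction, locating $\alpha\in\OKPlus$ represented by the genus but not by $Q$ itself. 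The real places never obstruct, since $Q$ is positive definite. In every case I would first use that a universal $Q$ represents the unit $1$, so $Q\cong\qf{1}\perp B$ with $B$ a positive definite binary classical form and $\disc Q=\disc B$, reducing the problem to the dyadic data of $B$.

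I would then split according to whether $\sqrt2\in K$. Suppose first $\sqrt2\notin K$. If $2$ is unramified (odd discriminant), \cite{KKK} already shows that the only qualifying totally real field is the quadratic field $\Q(\sqrt5)$, so no quartic $K$ arises. If $2$ ramifies but $\sqrt2\notin K$, then by the sums-of-squares connection of \cite{KKK} a universal ternary classical form forces the inclusion $2\OKPlus\subseteq\sum\nolimits^4\square$; the plan is to show this inclusion fails for every quartic field of this type, where a dyadic integral computation at a prime $\ip\mid2$ -- exploiting both the ramification of $2$ and the absence of $\sqrt2$ from $K_\ip$ -- should produce an element of $2\OKPlus$ that is not a sum of four integral squares. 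This disposes of the non-$\sqrt2$ case using only prior results.

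The core of the theorem is the case $\sqrt2\in K$, so that $K\supseteq\Q(\sqrt2)$ and each completion $K_\ip$ with $\ip\mid2$ contains the ramified dyadic field $\Q_2(\sqrt2)$, where the sums-of-squares reduction is unavailable and the dyadic analysis must be carried out directly. Here I would classify, over $\co_\ip$, exactly which square classes the local lattice $\qf{1}\perp B$ integrally represents; the presence of $\sqrt2$ forces a more rigid dyadic shape on classical $B$, and for most $B$ this leaves a totally positive integer $2$-adically unrepresentable, giving the local-integral obstruction. For the remaining $B$ whose genus represents all of $\OKPlus$, I would pin $B$ down further using that $Q$ must also represent a finite criterion set of small totally positive integers, and then invoke a spinor-genus and class-group computation to find a totally positive $\alpha$ in the genus but not represented by $Q$. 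The degree-$4$ hypothesis is decisive: $\Q(\sqrt2)$ has only two real places and unit rank $1$, and this scarcity of signs and square classes is exactly what permits the universal ternary forms of \eqref{eq:ListQFs2}; a totally real quartic field has four real places, unit rank $3$, and a correspondingly larger group $\UKPlus/\UKctv$, and it is this extra freedom that guarantees that for every classical $B$ one of the two obstructions occurs.

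The step I expect to be the main obstacle is the dyadic integral theory in the presence of $\sqrt2$: over $K_\ip\supseteq\Q_2(\sqrt2)$ one must determine precisely which elements a classical ternary lattice represents, and the several possible factorisations of $2$ in $K/\Q(\sqrt2)$ each demand separate treatment, all while tracking how representing $1$ and the criterion set constrains $B$. The hardest sub-case is the genus-universal one, where the non-representation is genuinely global and calls for controlling the spinor genus and the relevant class group uniformly over all quartic $K\ni\sqrt2$. The biquadratic fields $\Q(\sqrt2,\sqrt m)$ are already settled in \cite{KTZ} and may be quoted, so the new difficulty concentrates in the cyclic and non-Galois quartic fields containing $\sqrt2$, for which the general machinery developed for $\sqrt2\in K$ must be deployed.
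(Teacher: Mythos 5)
Your outer reduction (quote \cite{KKK} to dispose of $\sqrt2\notin K$, quote \cite{KTZ} for biquadratic fields, split off $\qf{1}$ via representation of a unit) matches the paper's skeleton, but the core machinery you propose---a dyadic local-integral obstruction with a spinor-genus fallback---cannot carry the weight of the theorem, because in the decisive cases the failure of universality is a global phenomenon invisible to every completion. The hard quartic fields are precisely those containing $\sqrt2$ with $\UKPlus=\UKctv$, such as $\Q(\!\sqrt2,\sqrt5)$, $\Q(\zeta_{16}^{}+\zeta_{16}^{-1})$ and $\Q\bigl(\!\sqrt{5(2+\sqrt2)}\bigr)$. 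Over such fields the relevant candidates---the lifts of the four forms \eqref{eq:ListQFs2} and certain unimodular non-LIRE lattices---have isotropic ternary spaces at every dyadic prime (universality over $\Q(\!\sqrt2)$ forces isotropy over $\Q_2(\!\sqrt2)$, and isotropy persists under extension); for $K=\Q(\!\sqrt2,\sqrt5)$ the dyadic prime is even unramified in $K/\Q(\!\sqrt2)$, so the lifted lattices inherit local universality and no $\alpha\in\OKPlus$ is excluded by any $\co_\ip$. This is exactly why the paper's obstructions are global: indecomposable elements of the wrong norm (Corollary~\ref{co:NormIndecAboveSqrt2}, e.g.\ $\gamma=\frac{5+\sqrt2+\sqrt5+\sqrt{10}}2$ of norm $9$ over $\Q(\!\sqrt2,\sqrt5)$), Cauchy--Schwarz/Gram-matrix arguments forcing representing vectors of $1$, $2+\sqrt2$, $3$, $3(2-\sqrt2)$ to be orthogonal (hence rank $\geq4$), and freeness/class-group arguments. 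Your fallback for the locally universal case---spinor genus theory for \emph{definite ternary} lattices---is precisely the regime where genus and spinor genus do not determine representation and where effective statements require GRH (cf.\ Kaplansky's conjecture and \cite{Rouse}, recalled in the introduction); you give no way to produce and certify a missed element for a given form, and, worse, no mechanism uniform over the infinitely many quartic fields. The paper obtains uniformity quite differently: under condition \eqref{eq:SmallCondition} the four-element escalation forces rank $\geq4$ or $\sqrt{5(2+\sqrt2)}\in K$ (Proposition~\ref{pr:nonLIRE-5lambda}), while fields violating \eqref{eq:SmallCondition} have discriminant bounded by a Schur-type estimate (Proposition~\ref{pr:KubasBound}), leaving a finite computer-checked list (Lemma~\ref{le:solve1308fields}) plus five exceptional fields (Lemma~\ref{le:exceptional}).

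Two specific assertions in your plan are, moreover, false or unsupported. First, the claim that a totally real quartic field has ``a correspondingly larger group $\UKPlus/\UKctv$'' which ``guarantees'' one of your obstructions inverts the actual difficulty: when $\abs{\UKPlus/\UKctv}>1$ the theorem is immediate from Proposition~\ref{pr:KKK}, and all of the work concerns the many quartic fields with $\UKPlus=\UKctv$ ($360$ of the $1308$ fields below the discriminant bound in Lemma~\ref{le:solve1308fields} have trivial quotient). Second, in your branch ``$2$ ramifies but $\sqrt2\notin K$'' you intend to show $2\OKPlus\not\subset\sum^4\square$ for \emph{every} such quartic field by a dyadic computation; but $\Q(\zeta_{20}^{}+\zeta_{20}^{-1})$ is such a field, it satisfies $2\OKPlus\subset\sum\square$ by \cite{KY-EvenBetter}, and it reportedly even admits a universal non-classical ternary form, so no dyadic obstruction is available there---failures of sums-of-squares representations over totally real fields are global (Lagrange- and Siegel-type statements are not local theorems). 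Eliminating that field, and the case $\sqrt2\notin K$ in general, requires the full strength of \cite[Thm.~1.2]{KKK}, which is what the paper quotes; the sums-of-squares connection alone does not suffice.
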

	
	This will be proved as Theorem~\ref{th:QuarticKitaokaFreeLattices}. There, it is formulated for classical free lattices instead of classical quadratic forms (an equivalent statement written in a more geometric language). This leads us to the following explicit version of Kitaoka's conjecture.

	\begin{conjecture} \label{co:KitaokaStrong}
		Let $K$ be a totally real number field such that there exists a universal ternary classical quadratic form, or more generally, a universal ternary classical quadratic lattice over $K$. Then $K=\Q(\!\sqrt2)$, $\Q(\!\sqrt3)$, or $\Q(\!\sqrt5)$. 
	\end{conjecture}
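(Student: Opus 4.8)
Because the Conjecture asserts an exact finite list across \emph{all} degrees, a complete proof is presently out of reach; what follows is the program one would pursue, built on the tools assembled above, together with the point at which it currently stalls. The natural first move is to reduce the degree. By Earnest--Khosravani any totally real field carrying a universal ternary classical lattice has even degree, and the cases $[K:\Q]=2$ and $[K:\Q]=4$ are settled by Chan--Kim--Raghavan and by Theorem~\ref{th:QuarticKitaoka2}, respectively (the latter in the form-, i.e. free-lattice, case, with non-free ternary lattices folded into the per-field class-group bookkeeping). It therefore remains to exclude every even degree $\geq 6$ and to certify that the three surviving fields genuinely admit such a form. The crux is thus twofold: a \emph{uniform bound on the degree}, and, within each admissible degree, a \emph{finiteness statement} refining the list to nothing new.

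For the degree bound I would split along the dichotomy that organises the whole paper, according to whether $\sqrt2\in K$. When $\sqrt2\notin K$, the implication of Kala--Kramer--Kr\'asensk\'y forces $2\OKPlus\subset\sum^4\square$, and the plan is to show this containment cannot hold once $[K:\Q]$ is large. Since the four-square problem over $K$ is governed by local densities at the dyadic primes, the strategy is to produce, by a geometry-of-numbers argument in the trace lattice $\OK$, a totally positive $2\alpha$ that is everywhere-locally a sum of four squares except at some $\ip\mid 2$ -- a dyadic obstruction whose existence should be forceable for all sufficiently large degree. When $\sqrt2\in K$, this route is unavailable (this is exactly the difficult case the paper isolates), and one must work directly with the local structure at the prime $\ip$ with $\ip^2\mid 2$: a classical ternary $\OK$-lattice has, through the Jordan splitting of its completion over $\co_{K_\ip}$, only a restricted set of represented square classes, and the goal is to exhibit a totally positive integer -- a unit, or a small generator of $\OKPlus$ -- lying outside that set, again uniformly in the degree.

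Once the degree is bounded, say by $D$, the Weak Kitaoka conjecture of Kala--Yatsyna reduces each even $d$ with $4<d\le D$ to an effectively listable finite set of fields. For each candidate one would run the local--global machinery: the requirement that $1$ and the totally positive units be represented bounds the discriminant of any would-be universal ternary lattice, so only finitely many genera occur, and in each genus one seeks a totally positive integer that is a local non-representation at some $\ip\mid 2$ or an archimedean obstruction. The same analysis, run in the positive direction over $\Q(\!\sqrt2),\Q(\!\sqrt3),\Q(\!\sqrt5)$, certifies that the explicit forms there (such as those listed in~\eqref{eq:ListQFs2}) are in fact universal, closing the list.

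The main obstacle is the uniform degree bound in the case $\sqrt2\in K$. The sum-of-four-squares argument is comparatively robust because it reduces to a single dyadic density computation, but when $2$ ramifies the Jordan splitting of a ternary $\co_{K_\ip}$-lattice is governed by the subtle norm--scale behaviour of its dyadic unimodular components, and extracting an omitted square class \emph{without reference to the degree} is precisely the phenomenon that this paper controls only for quartic fields. Until such a degree-free dyadic obstruction is available, one cannot bridge the gap between the per-degree finiteness of Weak Kitaoka and the finiteness across all degrees that the Conjecture demands; this is why the statement stands as a conjecture rather than a theorem.
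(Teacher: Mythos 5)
The statement you were asked to prove is, in the paper, exactly what its label says: a conjecture. The paper never proves it and does not claim to; its own contribution is partial supporting evidence (Theorem~\ref{th:QuarticKitaoka2} for free lattices over quartic fields, Proposition~\ref{pr:quartic250k} for all lattices over quartic fields of discriminant at most $250\,000$, Section~\ref{se:notSquarefree} for fields where $2$ is not squarefree, and the quadratic-field lattice case cited from \cite{KK+}). You correctly recognised that a complete proof is out of reach and offered a research program instead of a proof, which is the honest and appropriate response; in that sense there is nothing to fault, and there is no proof in the paper to compare yours against.

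Two remarks on the program itself, measured against what the paper actually establishes. First, you overstate the quartic case: Theorem~\ref{th:QuarticKitaoka2} (i.e.\ Theorem~\ref{th:QuarticKitaokaFreeLattices}) covers only \emph{free} lattices, equivalently quadratic forms; non-free ternary classical lattices over quartic fields are \emph{not} ``folded into per-field class-group bookkeeping'' anywhere in the paper --- they are handled only up to discriminant $250\,000$ in Proposition~\ref{pr:quartic250k} and remain open in general. Since the conjecture is explicitly stated for lattices, even degree $4$ is not fully settled. Second, your proposed route to a uniform degree bound when $\sqrt2\notin K$ (showing $2\OKPlus\subset\sum^4\square$ must fail in large degree) is plausible but is not something the paper does or even suggests is within reach; the paper's evidence in higher degree instead comes from Propositions~\ref{pr:nonLIRE-5lambda} and~\ref{pr:nonLIRE-free} for fields containing $\sqrt2$ satisfying \eqref{eq:SmallCondition}, from Section~\ref{se:notSquarefree} for fields where $2$ is not squarefree, and from the announced resolution for the fields $\Q(\zeta_k^{}+\zeta_k^{-1})$ --- none of which yields, or claims, a bound on the degree. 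Your diagnosis of where the genuine obstruction lies (a degree-free dyadic argument when $2$ ramifies, hardest when $\sqrt2\in K$) does agree with the paper's framing of $\sqrt2\in K$ as the most difficult case, so as an assessment of the state of the art your proposal is essentially accurate modulo the quartic-lattice slip.
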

	
	We discuss this conjecture further in Section~\ref{se:NotesConj}.
	
	\medskip

	It is interesting that the quadratic fields from the above conjecture can also be written as $\Q(\zeta_k^{}+\zeta_k^{-1})$ for $k=8$, $12$, and $5$, respectively. Here, $\Q(\zeta_k^{}+\zeta_k^{-1})$ is the maximal totally real subfield of the $k$th cyclotomic field $\Q(\zeta_k^{})$, where $\zeta_k^{}={\mathrm{e}}^{2\pi \mathrm{i}/k}$. Although \cite{KY-EvenBetter} is not primarily about Kitaoka's conjecture, it contains the proof that fields of this form do not admit a universal ternary classical quadratic form if $k>5$ is an odd prime or a power of $2$ other than $2^3$. These fields (for an arbitrary $k$) play a role in our other main result.
	
	\begin{theorem}[cf. Theorem~\ref{th:QFNotUniversal}] \label{th:IntroQFNotUniversal}
		Let $Q$ be a universal ternary classical quadratic form over $\Q(\!\sqrt2)$. Let $K\ni\sqrt2$ be a totally real number field containing a field of the form $\Q(\zeta_k^{}+\zeta_k^{-1})$ other than $\Q$ and $\Q(\!\sqrt2)$. Then $Q$ is not universal over $K$.
	\end{theorem}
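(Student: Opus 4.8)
The plan is to exhibit, for each universal ternary classical form $Q$ over $\Q(\!\sqrt2)$, a single totally positive integer in $\OK$ that $Q$ fails to represent. Since the universal forms over $\Q(\!\sqrt2)$ form the explicit finite list \eqref{eq:ListQFs2}, it suffices to treat each of them; and since isotropy and positive definiteness are inherited by every completion of $K$, such a $Q$ remains \emph{locally} universal over $K$, so the obstruction I am after must be global. The natural candidate comes from the subfield: writing $\eta=\zeta_k^{}+\zeta_k^{-1}$, the element $\theta:=2-\eta\in\OK[L]$ is totally positive (its conjugates are the numbers $4\sin^2(\pi j/k)\in(0,4)$) and, because $L\neq\Q,\Q(\!\sqrt2)$, it does not lie in $\Q(\!\sqrt2)$. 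I would take $\alpha$ to be $\theta$, or a small modification $\theta+m$ with $m\in\co_{\Q(\!\sqrt2)}^+$ chosen to fit the dyadic analysis below, so that $\alpha$ is a genuinely new totally positive integer of small house $\house{\alpha}<C$ for an explicit constant $C$.

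First I would use positive definiteness to confine any representation. If $Q(\vct v)=\alpha$ with $\vct v\in\OK^{\,3}$, then in every real embedding $\sigma$ one has $\lambda_\sigma\sum_i (v_i^{(\sigma)})^2\le Q(\vct v)^{(\sigma)}=\alpha^{(\sigma)}$, where $\lambda_\sigma>0$ is the least eigenvalue of $Q$ at $\sigma$; hence $\house{v_i}\le(\house{\alpha}/\min_\sigma\lambda_\sigma)^{1/2}$. The crucial extra input is $\sqrt2\in K$ together with classicality: reducing $Q(\vct v)=\alpha$ modulo $2\OK$ kills all mixed terms, and on $\OK/2\OK$ the squaring map $t\mapsto t^2$ is the (additive) Frobenius, so $\alpha\equiv a v_1^2+bv_2^2+cv_3^2$ is a constrained combination whose possible values depend only on the images of the fixed coefficients $a,b,c\in\co_{\Q(\!\sqrt2)}$ and of $\sqrt2$ and $\eta$ -- data independent of the rest of $K$. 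Combining the archimedean box with this dyadic congruence, I would force the representing vector to lie in the fixed compositum $M:=L(\!\sqrt2)$: the components of $\vct v$ transverse to $M$ contribute totally nonnegative, quantized amounts to $Q(\vct v)$, and for $\alpha$ of sufficiently small house these contributions must vanish. This reduces the problem to whether the fixed form $Q$ represents $\alpha$ over the fixed ring $\co_M$.

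That last question is a finite computation: since $\house{\alpha}$ bounds the candidate vectors inside the lattice $\co_M^{\,3}$, one checks directly -- using the explicit coefficients of each form on the list \eqref{eq:ListQFs2} and the explicit value of $\alpha\in\OK[L]$ -- that no $\vct w\in\co_M^{\,3}$ satisfies $Q(\vct w)=\alpha$. Running this over the finitely many forms completes the proof; alternatively, once the descent to $M$ is in place one may invoke a criterion set for $M$ and verify that $Q$ misses one of its elements.

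The main obstacle is precisely the uniformity in $[K:\Q]$. Because no local obstruction is available and $\OK$ may contain algebraic integers of arbitrarily large degree but small house (for instance further values $\zeta_m+\zeta_m^{-1}$), the confinement step cannot rest on finiteness of small-house integers in $\OK$ alone. The device that makes it degree-independent is the dyadic reduction furnished by $\sqrt2\in K$ and the classical shape of $Q$: it pins the relevant residues to a bounded amount of data attached to $\sqrt2$ and $\zeta_k^{}+\zeta_k^{-1}$, and thereby lets one descend to the fixed field $M$ regardless of how large $K$ is. Verifying that, for the chosen $\alpha$, this squeezing really does force $\vct v\in\co_M^{\,3}$ -- and selecting $\alpha$ (the additive shift $m$) so that the dyadic and archimedean constraints are simultaneously incompatible with a representation -- is the delicate heart of the argument.
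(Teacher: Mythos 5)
Your target---exhibiting, for each form in \eqref{eq:ListQFs2}, a totally positive element that it fails to represent---is the right one, and your candidate $\theta=2-\zeta_k^{}-\zeta_k^{-1}$ is exactly the element $\beta_k$ that the paper works with. But the mechanism you propose for proving non-representation has a genuine gap at its core, and you have flagged it yourself: the \enquote{descent to $M=L(\!\sqrt2)$} is asserted, not proved, and it cannot work as described. Reduction modulo $2\OK$ yields congruences in the ring $\OK/2\OK$, whose size grows with $[K:\Q]$; nothing in that reduction confines the coordinates of a representing vector to the fixed subring $\co_M$. Nor is there any orthogonal, \enquote{quantized} splitting of $\OK^{\;\!3}$ relative to $\co_M^{\;\!3}$: writing $\vv=\vw+\vct{u}$ with $\vw\in\co_M^{\;\!3}$, the cross term $2B_Q(\vw,\vct{u})$ need not be totally nonnegative, so the \enquote{transverse contribution} cannot be discarded by positivity; and since, as you note, $\OK$ may contain elements of arbitrarily large degree and small house, the archimedean box plus dyadic congruences do not reduce the problem to a finite computation over a fixed ring. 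Moreover, even granting the descent, your choice of $\alpha$ fails in cases the theorem must cover: if $F_{12}=\Q(\!\sqrt3)\subset K$, then $\theta=2-\sqrt3$ is a totally positive \emph{unit}, and in fact $2-\sqrt3=\bigl(\zeta_{24}^{5}+\zeta_{24}^{-5}\bigr)^2$ is a square in any $K\supset F_{24}$, hence represented by every form on the list; and if $F_5=\Q(\!\sqrt5)\subset K$, then $\beta_5=\frac{5-\sqrt5}{2}$ has $\norm{K/\Q}{\beta_5}=5^{d/2}>2^d$ and $\Tr{K/\Q}{\beta_5}=\frac52 d$, so neither criterion of Lemma~\ref{le:smallAreIndecomposable} applies and no obstruction is visible.

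The paper's actual route avoids any descent and is uniform in $[K:\Q]$. Each form in \eqref{eq:ListQFs2} is rewritten as $\square+(2+\sqrt2)\cdot G$ or $\square+\square+(2+\sqrt2)\cdot G$ with $G$ a positive definite binary form, whence universality over $K$ forces every \emph{indecomposable} of $\OK$ to be $\square$ or $(2+\sqrt2)\square$; the unsquaring Lemma~\ref{le:UnsquaringIndecomposables} upgrades this to membership in $\UKctv\cup(2+\sqrt2)\UKctv$, so every indecomposable has norm $1$ or $2^{d/2}$ (Proposition~\ref{pr:IndecAboveSqrt2}, Corollary~\ref{co:NormIndecAboveSqrt2}). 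The contradiction then comes from producing an indecomposable of forbidden norm: your $\theta=\beta_{p^n}$ does this for the largest prime $p>5$ dividing $k$, and for $F_9$, $F_{25}$, $F_{16}$, via the trace criterion (Lemma~\ref{le:CyclotomicSubfield}); for $F_5$ one must instead take $\gamma=\frac{5+\sqrt2+\sqrt5+\sqrt{10}}{2}$ of norm $3^{d/2}$; and in the residual case $F_{12}\subset K$ the paper exhibits no element at all, invoking instead $\abs{\UKPlus/\UKctv}\geq2$ (Lemma~\ref{le:unitsDontDissappear}) together with the unit-theoretic obstruction of Proposition~\ref{pr:KKK} imported from \cite{KKK}. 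These case distinctions are not an artifact: they show that a single uniformly chosen $\alpha$ of the shape you propose cannot carry the whole proof, and that the real work lies in the structural result on indecomposables rather than in any bounding or congruence argument.
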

	
	Let us put this theorem into context. The quadratic form $\iks^2+\ips^2+\iss^2$ defined over $\Q$ is universal over $\Q(\!\sqrt5)$; similarly, the non-classical quadratic form $\iks^2+\ips^2+\iss^2+\ifs^2+\iks \ifs+\ips \ifs+\iss \ifs$ is universal over the cubic field $\Q(\zeta_7^{}+\zeta_7^{-1})$. Generally, the \emph{lifting problem for universal quadratic forms} asks for field extensions $K/F$ of totally real number fields such that a quadratic form defined over $F$ is universal over $K$.
	
	The lifting problem for $F=\Q$ is well-studied \cite{KY-lifting,GT,KL,KY-EvenBetter}, but much remains open. In particular, it seems that the only proper extensions of $\Q$ which admit a universal quadratic form with coefficients in $\Z$ are $\Q(\!\sqrt5)$ and $\Q(\zeta_7^{}+\zeta_7^{-1})$. A much older instance of the lifting problem is Siegel's result \cite{Si} that the only totally real number fields where the sum of any number of squares is universal are $\Q$ and $\Q(\!\sqrt5)$. This easily implies that no other fields admit a universal diagonal quadratic form defined over $\Q$.
	
	To the best of our knowledge, the lifting problem for general $F$ was started in \cite{KY-weak} and much progress was made in \cite{KKL-Lifting}. They proved that for fixed $F$ and $d$, there are only finitely many $K\supset F$ with $[K:F]=d$ admitting a universal quadratic form defined over $F$. Also, they showed that $\Q(\!\sqrt2,\sqrt5)$ does admit a universal quadratic form with coefficients in $\Q(\!\sqrt2)$. Our Theorem~\ref{th:IntroQFNotUniversal} (and Theorem~\ref{th:QFNotUniversal}, which establishes the same conclusion under several other conditions as well) studies the lifting problem over $\Q(\!\sqrt2)$ in the particular case when the quadratic form is classical ternary. Our results can also be applied for the \emph{lifting problem for universality criterion sets}, see Corollary~\ref{co:NotCriterion}.

	%=====================================================
	\section{Preliminaries} \label{se:prelims}
	
	We write $A \subset B$ for not necessarily strict inclusion of sets, $A \subsetneq B$ for strict inclusion, and $\Z^+$ for the set of positive integers.
	
	\subsection{Algebraic integers}
	Let $K$ be a totally real number field and $\OK$ its ring of algebraic integers. We use the symbol $\square$ to denote an arbitrary square of an element of $\OK$; note that two instances of this symbol generally stand for different values. In particular, we write $\sum^n\square=\{\alpha_1^2+\cdots+\alpha_n^2\mid \alpha_i\in\OK\}$ and $\sum\square=\bigcup_{n \in \Z^+}\sum^n\square$.
	
	\medskip
	
	Assume that $[K:\Q]=d$; then there exist $d$
	distinct embeddings $\sigma_i: K\hookrightarrow\R$ for $1\leq i\leq d$. The \emph{norm} and the \emph{trace} of $\alpha\in K$ are defined as
	\[
	\norm{K/\Q}{\alpha}=\prod_{i=1}^d\sigma_i(\alpha), \qquad \Tr{K/\Q}{\alpha}=\sum_{i=1}^d\sigma_i(\alpha).
	\]
	We also define the \emph{house} of $\alpha \in K$ as
	\[
	\house{\alpha} = \max_{1\leq i\leq d} \abs{\sigma_i(\alpha)};
	\]
	note that in contrast to the norm and trace, the house of $\alpha\in\OK$ is typically not a rational integer.
	
	Let $\alpha,\beta\in K$; we write $\alpha\succ\beta$ if $\sigma_i(\alpha)>\sigma_i(\beta)$ for all $1\leq i \leq d$, and $\alpha\succeq\beta$ if $\alpha\succ\beta$ or $\alpha=\beta$. We say that $\alpha$ is \emph{totally positive} if $\alpha\succ0$. We denote by $\OKPlus$ the set of all totally positive algebraic integers in $K$.
	
	A \emph{unit} is an element of $\OK$ with norm equal to $\pm1$; we denote the group of all units by $\UK$. Furthermore, we write $\UKPlus$ for the subgroup of totally positive units and $\UKctv$ for the subgroup of squares of units; note that $\UKctv\subset\UKPlus$.
	
	For $\alpha,\beta\in\OK$, we write $\beta \mid \alpha$ if $\alpha/\beta \in \OK$. We call $\alpha$ \emph{squarefree} if $\omega^2 \mid \alpha$ implies $\omega\in\UK$; we point out that the condition is about divisibility by a square of an element, \emph{not} of a general ideal. Section~\ref{se:notSquarefree} studies fields where $2$ is not squarefree.
	
	An element $\alpha\in\OKPlus$ is \emph{indecomposable} if it cannot be written as $\alpha=\beta+\gamma$ with $\beta,\gamma\in\OKPlus$; otherwise, $\alpha$ is called \emph{decomposable}.
	
	\begin{lemma}[{\cite[Lemma~2.1b]{KY-lifting} and \cite[Thm.~2.2]{KY-EvenBetter}}] \label{le:smallAreIndecomposable} 
		Let $K$ be a totally real number field with $[K:\Q]=d$. Let $\alpha \in \OKPlus$.
		\begin{enumerate}
			\item If $\norm{K/\Q}{\alpha} < 2^d$, then $\alpha$ is indecomposable. 
			\item If $\Tr{K/\Q}{\alpha} <\frac52d$, then $\alpha$ is indecomposable or $\alpha=2$.
		\end{enumerate}
	\end{lemma}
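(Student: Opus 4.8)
I would prove both parts by comparing the arithmetic and geometric means of the conjugates in a hypothetical decomposition $\alpha=\beta+\gamma$ with $\beta,\gamma\in\OKPlus$, exploiting that the norm of a nonzero totally positive algebraic integer is a positive rational integer, hence $\geq1$. For part (a), suppose $\alpha=\beta+\gamma$ with $\beta,\gamma\in\OKPlus$, so that $\norm{K/\Q}{\beta},\norm{K/\Q}{\gamma}\geq1$. In every embedding the AM--GM inequality gives $\sigma_i(\alpha)=\sigma_i(\beta)+\sigma_i(\gamma)\geq 2\sqrt{\sigma_i(\beta)\sigma_i(\gamma)}$, and multiplying over $1\leq i\leq d$ yields
\[
\norm{K/\Q}{\alpha}\ \geq\ 2^{d}\sqrt{\norm{K/\Q}{\beta}\,\norm{K/\Q}{\gamma}}\ \geq\ 2^{d}.
\]
Taking the contrapositive proves (a).

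For part (b) I would first record two facts about a single $\delta\in\OKPlus$. From AM--GM and $\norm{K/\Q}{\delta}\geq1$ one gets $\Tr{K/\Q}{\delta}\geq d\,\norm{K/\Q}{\delta}^{1/d}\geq d$, where equality forces all conjugates of $\delta$ to equal $1$, i.e.\ $\delta=1$. The second, sharper fact is
\[
(\ast)\qquad \delta\in\OKPlus,\ \delta\neq1\ \Longrightarrow\ \Tr{K/\Q}{\delta}\geq\tfrac32 d.
\]
Granting $(\ast)$, part (b) is immediate: if $\alpha=\beta+\gamma$ is a decomposition with $\alpha\neq2$, then the two summands are not both $1$, so after relabelling $\gamma\neq1$; hence $\Tr{K/\Q}{\alpha}=\Tr{K/\Q}{\beta}+\Tr{K/\Q}{\gamma}\geq d+\tfrac32 d=\tfrac52 d$, contradicting the hypothesis. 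Thus $\alpha=2$.

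The main obstacle is $(\ast)$, which says precisely that the smallest value of the absolute trace $\tfrac1d\Tr{K/\Q}{\cdot}$ of a totally positive algebraic integer other than $1$ is $\tfrac32$, attained by the conjugate pair $\tfrac{3\pm\sqrt5}{2}$ (the roots of $x^2-3x+1$). I would establish it by the auxiliary-function method. For $\delta\notin\{1,2\}$ the numbers $\norm{K/\Q}{\delta-k}=\prod_{i=1}^{d}\bigl(\sigma_i(\delta)-k\bigr)$ with $k\in\{0,1,2\}$ are nonzero rational integers, so $\prod_i\abs{\sigma_i(\delta)-k}\geq1$ and therefore $\sum_i\log\abs{\sigma_i(\delta)-k}\geq0$. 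It then suffices to exhibit constants $\lambda_0,\lambda_1,\lambda_2\geq0$ with
\[
x\ \geq\ \tfrac32+\lambda_0\log x+\lambda_1\log\abs{x-1}+\lambda_2\log\abs{x-2}\qquad(x>0),
\]
because summing over $x=\sigma_i(\delta)$ and discarding the nonnegative log-terms gives $\Tr{K/\Q}{\delta}\geq\tfrac32 d$; the cases $\delta=2$ (trace $2d$) and $\delta=\tfrac{3\pm\sqrt5}{2}$ are checked by hand. Requiring the right-hand side to be tangent to $y=x$ at $x=\tfrac{3\pm\sqrt5}{2}$ pins down admissible weights — which is natural because $x^2-3x+1$ takes values $\pm1$ at $x=0,1,2$, so all three norm constraints are simultaneously sharp there — and one finds a feasible choice, numerically $\lambda_0\approx0.865$, $\lambda_1\approx0.730$, $\lambda_2\approx0.135$.

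The genuinely delicate point, and where I expect to spend the effort, is verifying the displayed inequality on all of $(0,\infty)$: after clearing denominators the derivative is a cubic, and one must confirm that its two relevant roots $\tfrac{3\pm\sqrt5}{2}$ are the global minima and that the bound $\tfrac32$ is not undercut on any of $(0,1)$, $(1,2)$, $(2,\infty)$. As a reassuring special case showing that the difficulty lives entirely with large conjugates, note that if every $\sigma_i(\delta)\in(0,2)$ then
\[
\prod_{i=1}^{d}\sigma_i(\delta)\bigl(2-\sigma_i(\delta)\bigr)=\norm{K/\Q}{\delta}\cdot\norm{K/\Q}{2-\delta}\geq1,
\]
while each factor equals $1-\bigl(\sigma_i(\delta)-1\bigr)^2\leq1$; hence every factor is $1$, every $\sigma_i(\delta)=1$, and $\delta=1$. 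Thus the only nontrivial configurations have a conjugate $\geq2$, exactly the regime the auxiliary function is designed to control.
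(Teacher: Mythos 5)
Your argument is correct, but the comparison here is necessarily with the cited literature, because the paper does not prove this lemma at all: it imports part (a) from \cite[Lemma~2.1b]{KY-lifting} and part (b) from \cite[Thm.~2.2]{KY-EvenBetter}. Your proof of (a) is precisely the AM--GM argument of the former reference. Your reduction in (b) --- splitting a hypothetical decomposition $\alpha=\beta+\gamma$ with $\alpha\neq2$ into $\Tr{K/\Q}{\beta}\geq d$ and $\Tr{K/\Q}{\gamma}\geq\frac32 d$ for a summand $\gamma\neq1$ --- is also exactly the reduction used in the latter reference; the genuine difference is that there the key bound $(\ast)$ is quoted as a classical theorem on the Schur--Siegel--Smyth trace problem (the two smallest absolute traces of totally positive algebraic integers are $1$ and $\frac32$, the latter attained by $\frac{3\pm\sqrt5}{2}$), while you prove it yourself by the auxiliary-function method. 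Your sketch is sound, and the step you flag as delicate does go through: the stationarity conditions at $\frac{3\pm\sqrt5}{2}$ force $\lambda_1=1-2\lambda_2$ and $\lambda_0=1-\lambda_2$, and equality of the two tangency values forces $\lambda_2=\frac15\bigl(3-\sqrt5/(2\log\varphi)\bigr)\approx0.1353$ with $\varphi=\frac{1+\sqrt5}{2}$, so all three weights are positive; clearing denominators in the derivative yields a monic cubic whose roots are $\frac{3-\sqrt5}{2}$, $2\lambda_0\approx1.7293$ and $\frac{3+\sqrt5}{2}$, one in each of $(0,1)$, $(1,2)$, $(2,\infty)$. Since the auxiliary function tends to $+\infty$ at $0$, $1$, $2$ and $\infty$, its minimum on each interval is attained at that interior critical point, and the three minimal values are $0$, approximately $0.163$, and $0$; hence the inequality holds on all of $(0,\infty)$ and $(\ast)$ follows. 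The trade-off between the two routes is the expected one: citation is short and standard, while your version is self-contained and makes transparent both why $2$ is the unique decomposable exception and why the threshold is $\frac52=1+\frac32$.
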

	
	Although $2$ decomposes as $1+1$, one can show that this is the only possible decomposition as a sum of two totally positive algebraic integers. In particular, if $\sqrt2\notin K$, then the only $\omega\in\OK$ satisfying $\omega^2\preceq2$ are $0$ and $\pm1$.
	
	There are always only finitely many indecomposable elements up to multiplication by squares of units. For example, in $\Q(\!\sqrt2)$, the element $\lambda=2+\sqrt2$ is indecomposable by the lemma; and in fact, the set of indecomposables is exactly $\UKctv[\Q(\!\sqrt2)] \cup \lambda\,\UKctv[\Q(\!\sqrt2)]$.
	
	%------------------------
	\subsection{Signatures and units}
	
	The size of the factor group $\UKPlus/\UKctv$ plays a significant role in the study of Kitaoka's conjecture. It is always a power of $2$ and it can be computed from the \emph{class number} $h(K)$ and \emph{narrow class number} $h^+(K)$ as
	\[
	\abs{\UKPlus/\UKctv} = \frac{h^+(K)}{h(K)}.
	\]
	A significant advantage of this formula is that both class numbers can be found in the database LMFDB \cite{LMFDB}.
	
	A useful fact (see, e.g., \cite[Rem.~1]{DDK}) is that the size of this group cannot decrease under field extension:
	
	\begin{lemma} \label{le:unitsDontDissappear}
		Let $K \supset F$ be totally real number fields. Then $\abs{\UKPlus/\UKctv} \geq \abs{\UKPlus[F]/\UKctv[F]}$.
	\end{lemma}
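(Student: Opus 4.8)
The plan is to linearise everything over $\mathbb{F}_2$ and compare dimensions, because the obvious map $\UKPlus[F]/\UKctv[F]\to\UKPlus/\UKctv$ induced by $\UK[F]\hookrightarrow\UK$ is \emph{not} injective in general: a totally positive unit of $F$ that is not a square of an $F$-unit may well become a square of a $K$-unit (for instance $2+\sqrt3$ equals $\bigl(\tfrac{\sqrt6+\sqrt2}{2}\bigr)^2$ in $\Q(\!\sqrt2,\sqrt3)$, with $\tfrac{\sqrt6+\sqrt2}{2}$ a unit). So rather than chasing an injection I would bound dimensions. Set $d=[K:\Q]$ and $e=[F:\Q]$. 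By Dirichlet's unit theorem $\UK/\UKctv\cong\mathbb{F}_2^{\,d}$ and $\UK[F]/\UKctv[F]\cong\mathbb{F}_2^{\,e}$ as $\mathbb{F}_2$-vector spaces. The total signature homomorphism $\mathrm{sgn}_K\colon\UK\to\mathbb{F}_2^{\,d}$ recording the sign of each of the $d$ real embeddings kills $\UKctv$, so it descends to a map whose image I call $S_K$ and whose kernel is exactly $\UKPlus/\UKctv$. Rank--nullity gives $\dim_{\mathbb{F}_2}\!\bigl(\UKPlus/\UKctv\bigr)=d-\dim_{\mathbb{F}_2}S_K$, and likewise over $F$; hence the desired inequality is equivalent to $\dim_{\mathbb{F}_2}S_K\le\dim_{\mathbb{F}_2}S_F+(d-e)$.

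To produce this bound I would build a linear bridge $\pi\colon\mathbb{F}_2^{\,d}\to\mathbb{F}_2^{\,e}$ out of the relative norm. Group the $d$ real embeddings of $K$ into $e$ blocks, one for each embedding $\tau_j\colon F\hookrightarrow\R$, collecting the $m=d/e$ extensions of $\tau_j$ to $K$; let $\pi$ send a sign vector to the vector of its $e$ block-sums. The key identity is $\pi\circ\mathrm{sgn}_K=\mathrm{sgn}_F\circ\norm{K/F}{\,\cdot\,}$ on $\UK$: for a unit $v$ and an embedding $\tau_j$ one has $\tau_j\!\bigl(\norm{K/F}{v}\bigr)=\prod\sigma(v)$, the product taken over exactly the real embeddings $\sigma$ of $K$ lying above $\tau_j$, so the sign of the relative norm in the $j$th embedding of $F$ is precisely the $j$th block-sum of $\mathrm{sgn}_K(v)$. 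Since $\norm{K/F}{v}\in\UK[F]$, this identity shows $\pi(S_K)\subseteq S_F$. Verifying this identity -- that the factors of the relative norm are exactly the real embeddings of $K$ above $\tau_j$ -- is the one place needing a little care; everything else is formal.

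Finally $\pi$ is surjective with $\dim_{\mathbb{F}_2}\ker\pi=d-e$, so $\dim_{\mathbb{F}_2}S_K=\dim_{\mathbb{F}_2}\pi(S_K)+\dim_{\mathbb{F}_2}(S_K\cap\ker\pi)\le\dim_{\mathbb{F}_2}S_F+(d-e)$, which is exactly the inequality isolated above. Substituting back gives $\dim_{\mathbb{F}_2}(\UKPlus/\UKctv)=d-\dim_{\mathbb{F}_2}S_K\ge e-\dim_{\mathbb{F}_2}S_F=\dim_{\mathbb{F}_2}(\UKPlus[F]/\UKctv[F])$, and raising $2$ to these dimensions (both groups are elementary abelian $2$-groups) yields $\abs{\UKPlus/\UKctv}\ge\abs{\UKPlus[F]/\UKctv[F]}$. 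The main obstacle is conceptual rather than computational: the failure of injectivity of the naive map forces the detour through signatures and the relative norm, after which the argument is short. One can sanity-check tightness on $F=\Q(\!\sqrt3)$, $K=\Q(\!\sqrt2,\sqrt3)$, where both sides equal $2$.
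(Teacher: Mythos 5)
Your proof is correct. One thing to note for comparison: the paper does not prove this lemma at all --- it is quoted as a known fact with the citation \cite{DDK} --- so there is no internal argument to measure against; what you wrote is the standard proof of monotonicity of the unit signature rank deficiency under extensions of totally real fields, which is essentially what the cited reference establishes. The two points on which correctness hinges are both handled properly. First, you are right that the naive map $\UKPlus[F]/\UKctv[F]\to\UKPlus/\UKctv$ need not be injective, so a direct transfer of nonsquare totally positive units is impossible and a dimension count is genuinely needed; your example checks out, since $\tfrac{\sqrt6+\sqrt2}{2}$ is a root of $t^4-4t^2+1$ and hence a unit of $\Q(\!\sqrt2,\sqrt3)$ whose square is $2+\sqrt3$. (Relatedly, the naive norm argument on totally positive units only succeeds when $[K:F]$ is odd, which is why the detour through signatures is unavoidable.) Second, the key identity $\tau_j\bigl(\norm{K/F}{v}\bigr)=\prod_{\sigma}\sigma(v)$, the product over the embeddings $\sigma$ of $K$ extending $\tau_j$, is the standard expression of the relative norm of a separable extension; every $\tau_j$ has exactly $[K:F]=d/e$ extensions to $K$, and all of them are real because $K$ is totally real, so your blocks partition the $d$ embeddings correctly, $\pi\circ\sign_K=\sign_F\circ\,\norm{K/F}{\cdot}$ holds on $\UK$, and $\pi(S_K)\subset S_F$ follows because norms of units are units. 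With that, rank--nullity gives $\dim S_K\le\dim S_F+(d-e)$, which translates into $\dim_{\mathbb{F}_2}\bigl(\UKPlus/\UKctv\bigr)\ge\dim_{\mathbb{F}_2}\bigl(\UKPlus[F]/\UKctv[F]\bigr)$ exactly as you say.
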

	
	If $\sigma_1,\dots,\sigma_d$ are the embeddings $K\hookrightarrow\R$, then the \emph{signature} of a nonzero $\alpha\in K$ is the $d$-tuple 
	\[
	\ms(\alpha)=\bigl(\sign\sigma_1(\alpha),\dots,\sign\sigma_d(\alpha)\bigr)\in \{\pm1\}^d.
	\]  
	For example, $\alpha \succ 0$ if and only if $\ms(\alpha)=(+1, \ldots, +1)$. 
	
	The case when $\UKPlus=\UKctv$ (i.e., $\abs{\UKPlus/\UKctv}=1$), which is in the centre of our attention, can also be characterised by the fact that \enquote{there exist units of all signatures}. This has an important corollary: \emph{Let $\UKPlus=\UKctv$. Then for every nonzero $\beta\in\OK$ there is some $\eta\in\UK$ such that $\eta \beta \in \OKPlus$.} Sometimes we use $\eta_{\beta}$ to denote such a unit.

	%------------------------
	\subsection{Quadratic forms and lattices}
	Let $V$ be a finite-dimensional vector space over $K$. The map $\varphi:V\to K$ is called a \emph{quadratic map} if the following conditions are satisfied:
	\begin{enumerate}[(1)] \setlength{\itemsep}{2pt}
		\item $\varphi(\alpha\vv)=\alpha^2\varphi(\vv)$ for all $\alpha\in K$ and $\vv\in V$;
		\item the map $B_\varphi:V\times V\to K$ defined by $B_\varphi(\vv,\vw)=\frac12\bigl(\varphi(\vv+\vw)-\varphi(\vv)-\varphi(\vw)\bigr)$ is bilinear.
	\end{enumerate}
	The pair $(V,\varphi)$ is a \emph{quadratic space} over $K$. It is \emph{positive definite} if $\varphi(\vv)\succ0$ for all nonzero $\vv\in V$. (Often, it is called \enquote{totally positive definite} instead.)

	\medskip
	
	Let $(V,\varphi)$ be a quadratic space and let $(\vv_1,\dots,\vv_n)$ be a basis of $V$; then 
	\begin{equation}\label{eq:QMapToForm}
		Q(\iks_1,\dots,\iks_n)=\varphi(\iks_1\vv_1+\dots+\iks_n\vv_n)
	\end{equation}
	is an \emph{$n$-ary quadratic form}. In particular, it is a homogeneous polynomial of degree $2$ in $n$ variables with coefficients in $K$, i.e., it can be written as
	\[
	Q(\iks_1,\dots,\iks_n)=\sum_{1\leq i\leq j\leq n}\alpha_{ij}\iks_i\iks_j \quad \text{where } \alpha_{ij}\in K.
	\]
	A quadratic form $Q$ is \emph{integral} if $\alpha_{ij}\in\OK$ for all $1\leq i\leq j\leq n$, and \emph{classical} if it is integral and $\alpha_{ij}\in2\OK$ for all $i\neq j$. It is \emph{positive definite} if the corresponding quadratic space is. It \emph{represents} $\alpha\in K$ if $\alpha \in Q(\OK^{\;\! n})$. A positive definite integral quadratic form is \emph{universal} (\emph{over $K$}) if it represents all elements of $\OKPlus$. 
	
	We say that $n$-ary quadratic forms $Q_1, Q_2$ are \emph{isometric} if there exists $M\in \OK^{\;\! n\times n}$ with $\det M \in\UK$, such that $Q_2(\boldsymbol{\iks})=Q_1(M\boldsymbol{\iks})$, where $\boldsymbol{\iks}=(\iks_1,\dots,\iks_n)^{\mathrm{T}}$. We denote this by $Q_1\simeq Q_2$.
	If $Q_1$ is an $m$-ary and $Q_2$ an $n$-ary integral quadratic form, where $m\geq n$, then $Q_1$ \emph{represents} $Q_2$ if there exists $M \in \OK^{\;\! m\times n}$ such that $Q_2(\boldsymbol{\iks}) = Q_1(M\boldsymbol{\iks})$. For example, $3x^2+2xy+y^2=x^2+x^2+(x+y)^2$ is represented by $X^2+Y^2+Z^2$.
	
	\medskip
	
	Let $V$ be a finite-dimensional vector space over $K$. Then $L$ is a \emph{lattice on $V$} if it is a finitely generated $\OK$-submodule of~$V$ and $KL=V$. The \emph{rank} of $L$ is then $\rank L = \dim V$. Lattices of rank $1,2,3$ are called \emph{unary}, \emph{binary} and \emph{ternary}, respectively.
	
	For every lattice $L$ of rank $n$, we can find linearly independent vectors $\vv_1, \ldots, \vv_n\in L$ and an ideal $\ia \subset \OK$ such that
	\[
	L = \OK \vv_1 \oplus \cdots \oplus \OK \vv_{n-1} \oplus \ia^{-1} \vv_n
	\]
	(see the structure theorem for finitely generated modules over Dedekind domains); we refer to $(\vv_1,\dots,\vv_n)$ as a \emph{pseudobasis} of $L$. (Alternatively, we can rewrite $\ia^{-1}\vv_n$ as $\mathfrak{b}\vv_n'$ where $\mathfrak{b}$ is an ideal and $\vv_n'\in KL$; the term \emph{pseudobasis} covers that situation as well, and we do not need its precise definition.) We say that $L$ is \emph{free} if it is free as an $\OK$-module, which occurs if and only if $\ia$ is a principal ideal; in such case, we can scale $\vv_n$ so that $\ia=\OK$, and then $(\vv_1,\dots,\vv_n)$ is a \emph{basis} of $L$. In particular, all lattices are free when $h(K)=1$, which applies, e.g., when $K=\Q(\!\sqrt2)$.
	
	\medskip
	
	A \emph{quadratic lattice} over $K$ is a tuple $(L,\varphi)$, where $L$ is a lattice on $V$ and $(V,\varphi)$ is a quadratic space over $K$. It is called \emph{positive definite} if the quadratic space $(V,\varphi)$ is positive definite. We say that the quadratic lattice $(L,\varphi)$ is \emph{integral} if $\varphi(L)\subset\OK$. It is \emph{classical} if $B_\varphi(\vu, \vv) \in \OK$ for all $\vu, \vv \in L$; in particular, a classical quadratic lattice is always integral.

	\begin{remarkunnum}
		Alternatively, we could define a quadratic lattice $(L,\varphi)$ without using a quadratic space: $L$ is a torsion-free finitely generated $\OK$-module and $\varphi:L\to K$ is a quadratic map. This is the approach we took in \cite{KKK}. The definitions are equivalent; one recovers the quadratic space by taking the tensor product as $V=K \otimes L$, and $\varphi$ extends uniquely to $V$. We often use this fact implicitly when we speak about lattices without referring to the quadratic space. 
	\end{remarkunnum} 
	
	\begin{convention}
		We fix some not entirely standard terminology for this paper.
		\begin{enumerate}[wide=0pt, listparindent=\parindent]\setlength{\itemsep}{1pt}
			\item We will slightly abuse the notation and not distinguish between quadratic forms and quadratic maps. 
			\item All our lattices will be quadratic, and we will usually denote them simply by $L$; unless specified otherwise, the corresponding quadratic map will always be denoted by $Q$. We usually use $KL$ instead of $V$ for the underlying quadratic space.
			\item All our lattices will be positive definite. 
			\item Most but not all of our lattices will be classical. Therefore, we will always say explicitly if the lattice is classical. Note that classical lattices are necessarily integral. On the other hand, by a \emph{lattice} we mean any quadratic lattice that can be non-integral. 
		\end{enumerate}
	\end{convention}

	A lattice $L$ \emph{represents} an element $\alpha \in K$ if there exists a $\vv \in L$ such that $Q(\vv) = \alpha$. A classical (or, more generally, integral) lattice $L$ over $K$ is \emph{universal} if it represents all of $\OKPlus$.

	Two lattices $L_1$ and $L_2$ are called \emph{isometric}, written $L_1 \simeq L_2$, if there exists a $K$-linear bijection $\iota : L_1 \to L_2$ preserving the quadratic map.
	
	\medskip
	
	Free quadratic lattices are in one-to-one correspondence with quadratic forms $\OK^{\;\! n} \to K$ via the same relation as in \eqref{eq:QMapToForm}, where $(\vv_1, \ldots, \vv_n)$ is a basis of $L$ instead of $V$. (Strictly speaking, to get a one-to-one correspondence, one should consider classes modulo isometry.) Note that a free quadratic lattice is integral / classical / positive definite / universal if and only if the corresponding quadratic form is.

	\medskip
	
	The \emph{orthogonal sum} of two lattices $(L_1, Q_1)$ and $(L_2, Q_2)$ is denoted $L_1 \perp L_2$ and defined as the direct sum $L_1 \oplus L_2$ of the underlying modules equipped with the quadratic map $Q(\vv_1 \oplus \vv_2) = Q_1(\vv_1) + Q_2(\vv_2)$. We write $\qf{\alpha}$ for the unary free lattice $(\OK \vv, Q)$ where $Q(\vv)=\alpha$, and $\qf{\alpha_1,\ldots,\alpha_n}$ for $\qf{\alpha_1}\perp\ldots\perp\qf{\alpha_n}$. A lattice that can be written in this form is sometimes called \emph{diagonalisable}.
	
	\begin{lemma}[{\cite[Prop.~3.4]{Kala-survey}}] \label{le:UnitSplits}
		Let $K$ be a totally real number field and let $L$ be a classical lattice over $K$. If $L$ represents some $\ve\in\UKPlus$, then there exists a classical lattice $L'$ over $K$ such that $L\simeq\qf{\ve}\perp L'$.
	\end{lemma}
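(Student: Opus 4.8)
The plan is to exhibit the splitting explicitly by projecting $L$ orthogonally onto the line spanned by a vector of value $\ve$. First I would choose $\vv \in L$ with $Q(\vv)=\ve$; such a vector exists precisely because $L$ represents $\ve$. For the associated bilinear form one has $B_\varphi(\vv,\vv)=Q(\vv)=\ve$, and since $L$ is classical, $B_\varphi(\vv,\vw)\in\OK$ for every $\vw\in L$.

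The key step is to introduce the orthogonal complement $L'=\{\vw\in L \mid B_\varphi(\vv,\vw)=0\}$ and to show that $L=\OK\vv\oplus L'$ as $\OK$-modules, with the two summands orthogonal. For this, set $\pi(\vw)=\ve^{-1}B_\varphi(\vv,\vw)$ for $\vw\in L$. This is exactly where the hypothesis $\ve\in\UKPlus$ is indispensable: because $\ve$ is a unit, $\pi(\vw)\in\OK$, so the decomposition $\vw=\pi(\vw)\vv+\bigl(\vw-\pi(\vw)\vv\bigr)$ stays inside $L$. A direct computation gives $B_\varphi\bigl(\vv,\vw-\pi(\vw)\vv\bigr)=B_\varphi(\vv,\vw)-\pi(\vw)\ve=0$, so the second summand lies in $L'$; hence $L=\OK\vv+L'$. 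The sum is direct, since $\alpha\vv\in L'$ forces $0=B_\varphi(\vv,\alpha\vv)=\alpha\ve$, whence $\alpha=0$. By the very definition of $L'$, the decomposition is orthogonal.

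Finally I would identify the two pieces. Since $Q(\alpha\vv)=\alpha^2\ve$, the rank-one summand $\OK\vv$ is isometric to $\qf{\ve}$. The complement $L'$ is a finitely generated torsion-free $\OK$-module (a submodule of $L$) of rank $\rank L-1$, hence a lattice on $(K\vv)^\perp$; it is classical because $B_\varphi$ restricted to $L'$ still takes values in $\OK$. Therefore $L\simeq\qf{\ve}\perp L'$, as claimed.

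The only genuine obstacle is the integrality of the projection coefficient $\pi(\vw)$, and this is precisely what the unit hypothesis secures; every other step is formal. In particular the argument never invokes positive definiteness, relying solely on $\ve$ being a unit (so that $\ve\neq0$ and $\ve^{-1}\in\OK$).
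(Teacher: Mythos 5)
Your proof is correct, and it is essentially the standard argument: the paper itself does not prove this lemma but cites \cite[Prop.~3.4]{Kala-survey}, where the proof is exactly this orthogonal-projection splitting, with integrality of the projection coefficient $\ve^{-1}B_\varphi(\vv,\vw)$ guaranteed by $L$ being classical and $\ve$ being a unit. Nothing is missing; in particular your observation that definiteness is not needed (only $B_\varphi(\vv,\vv)=\ve$ being invertible in $\OK$) is accurate.
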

	
	%------------------------
	\subsection{Gram matrix}
	
	Let $(V, Q)$ be a quadratic space over $K$ and $\vv_1,\dots,\vv_k\in V$. The \emph{Gram matrix} of $\vv_1,\dots,\vv_k$ is the symmetric matrix $G=\bigl(B_Q(\vv_i,\vv_j)\bigr)_{1\leq i,j\leq k}$. Note that since $Q$ is positive definite, as we assume throughout the text, the matrix $G$ is totally positive semidefinite (i.e., positive semidefinite in all embeddings). Thus we have
	\[
	B_Q(\vv_i,\vv_j)^2\preceq Q(\vv_i)\cdot Q(\vv_j) \quad \text{for } 1\leq i, j \leq k;
	\]
	this is also known as \emph{Cauchy--Schwarz inequality}. Moreover, $G$ is totally positive definite (i.e., $\det G \neq 0$) if and only if the vectors $\vv_1,\dots,\vv_k$ are linearly independent over $K$. More generally, it holds that $\rank (\OK\vv_1 + \cdots + \OK\vv_k) = \rank G$.
	
	If $L$ is a free quadratic lattice on $V$ and $(\vv_1,\dots,\vv_k)$ is a basis of $L$, then we call $G$ the \emph{Gram matrix of $L$ with respect to $(\vv_1,\dots,\vv_k)$}. We put $\det L = \det G$; it is well-defined up to multiplication by $\UKctv$. Note that the Gram matrix of a lattice (with respect to a basis) determines the lattice up to isometry.
	
	Let $G \in \OK^{\;\! n \times n}$ be totally positive definite. Then we define the classical free lattice $\qf{G}$ as $(\OK^{\;\! n},Q)$ where $Q(\vv)=\vv^{\mathrm{T}}G\vv$. Then $G$ is the Gram matrix of $\qf{G}$ with respect to the standard basis.

	%------------------------
	\subsection{Sublattices and dual lattices}\label{ss:Dual}
	
	Let $L$ and $L'$ be quadratic lattices on the same quadratic space $(V,Q)$. In this text, we say that $L'$ is a \emph{sublattice} of $L$ if $L'\subset L$ as sets; then necessarily $KL=KL'$. In particular, $\rank L=\rank L'$. For example, $\qf{1,1,4}$ is isometric to a sublattice of $\qf{1,1,1}$, but $\qf{1,1}$ is not isometric to a sublattice of $\qf{1,1,4}$.
	
	If $L'$ is a sublattice of $L$, then we say that $L$ is an \emph{overlattice} of $L'$.
	
	\medskip
	
	Let $(L,Q)$ be a lattice over $K$, and $V=KL$ the vector space generated by $L$. We define the \emph{dual} lattice of $L$ as
	\[
	\dual{L}=\{\vv\in V \mid B_Q(\vv,\vx)\in\OK \text{ for all } \vx\in L\}.
	\]
	Note that $\dual{L}$ is a lattice, but it does not have to be classical (even if $L$ is classical). Typically, $\dual{L}$ is not even integral. Although $L$ and $\dual{L}$ share the same quadratic space, they are usually not isometric.
	
	\begin{lemma} \label{le:DualLattice}
		Let $L$ be a classical lattice over $K$. Then $L\subset \dual{L}$ and all classical overlattices of $L$ are sublattices of $\dual{L}$.
	\end{lemma}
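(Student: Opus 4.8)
The plan is to read off both assertions directly from the definition of the dual lattice, since the hypothesis of classicality is precisely an integrality statement about the bilinear form $B_Q$. Recall that $\dual{L}=\{\vv\in V \mid B_Q(\vv,\vx)\in\OK \text{ for all } \vx\in L\}$, where $V=KL$.

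First I would prove $L\subset\dual{L}$. Fix any $\vv\in L$. To show $\vv\in\dual{L}$, I must check that $B_Q(\vv,\vx)\in\OK$ for every $\vx\in L$. But this is exactly the definition of $L$ being \enquote{classical}: by hypothesis $B_Q(\vu,\vw)\in\OK$ for all $\vu,\vw\in L$. Hence $\vv\in\dual{L}$, and since $\vv$ was arbitrary, $L\subset\dual{L}$.

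For the second assertion, let $M$ be a classical overlattice of $L$, so that $L\subset M$ and $B_Q(\vu,\vw)\in\OK$ for all $\vu,\vw\in M$. I would take an arbitrary $\vv\in M$ and verify $\vv\in\dual{L}$. Given any $\vx\in L$, the inclusion $L\subset M$ gives $\vx\in M$, and then the classicality of $M$ yields $B_Q(\vv,\vx)\in\OK$. As $\vx\in L$ was arbitrary, this shows $\vv\in\dual{L}$, and therefore $M\subset\dual{L}$.

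There is no real obstacle here: the entire content is the observation that \enquote{classical} is the same integrality condition that defines the dual, so the two inclusions are immediate. The only point worth keeping straight in the second part is which lattice supplies the integrality: one pairs a vector $\vv$ of the overlattice $M$ against a vector $\vx$ of the smaller lattice $L$, and since both $\vv$ and $\vx$ lie in $M$, it is the classicality of the larger lattice $M$ that does the work.
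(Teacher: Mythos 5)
Your proof is correct and matches the paper's own argument essentially verbatim: both inclusions follow directly from the definition of $\dual{L}$, with the first using the classicality of $L$ and the second using the classicality of the overlattice together with $L\subset M$ (the paper writes this as $B_Q(\vw,L)\subset B_Q(\vw,\tilde L)\subset \OK$). Nothing further is needed.
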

	
	\begin{proof}
		Let $\vv\in L$. Then $B_Q(\vv,L) \subset \OK$, since $L$ is classical; thus $\vv \in \dual{L}$. This proves the first assertion. Now let $\tilde{L}$ be a classical overlattice of $L$ and take $\vw \in \tilde{L}$. Then $B_Q(\vw,L) \subset B_Q(\vw,\tilde{L}) \subset \OK$; thus $\vw \in \dual{L}$, proving the second assertion.
	\end{proof}
	
	In fact, a lattice $L$ is classical \emph{if and only if} $L \subset \dual{L}$. Also, for $\vw\in KL$, one can easily see that $\vw \in \dual{L}$ together with $Q(\vw) \in \OK$ is equivalent to $L + \OK\vw$ being classical; but not all sublattices of $\dual{L}$ are classical, since $\dual{L}$ itself often is not. Furthermore, a lattice, even a classical lattice, can have infinitely many overlattices; however, it follows from Lemma~\ref{le:DualLattice} that a classical lattice has only finitely many classical overlattices.
	
	A lattice $L$ over $K$ is \emph{unimodular} if $L=\dual{L}$. Note that by Lemma~\ref{le:DualLattice}, unimodular lattices do not have any proper classical overlattices. A free lattice is unimodular if and only if it is classical and $\det L\in\UK$.
	
	\medskip
	
	The following well-known lemma explicitly describes the dual of a classical free lattice both as a set (giving the basis in \ref{le:DualGramMatrix-basis} and \ref{le:DualGramMatrix-coeff}), and as a quadratic form (part \ref{le:DualGramMatrix-Gram}).
	
	\begin{lemma} \label{le:DualGramMatrix}
		Let $L$ be a classical free lattice over $K$ with basis $(\e_1,\dots,\e_n)$, i.e., $L=\OK\e_1+\dots+\OK\e_n$. Let $G=(g_{ij})_{1\leq i,j\leq n}$ be the Gram matrix of $L$ with respect to this basis, i.e., $g_{ij}=B_Q(\e_i,\e_j)$. Then:
		\begin{enumerate}
			\item The dual lattice $\dual{L}$ is free with basis $(\f_1,\dots,\f_n)$ satisfying $B_Q(\f_i,\e_j)=\delta_{ij}$ for all $1\leq i,j\leq n$.\label{le:DualGramMatrix-basis}
			\item Let $h_{ij}\in K$ be such that $\f_j=\sum_{i=1}^nh_{ij}\e_i$. Then $(h_{ij})_{1\leq i,j\leq n}=G^{-1}$. \label{le:DualGramMatrix-coeff}
			\item The Gram matrix of $\dual{L}$ with respect to $(\f_1,\dots,\f_n)$ is $G^{-1}$. \label{le:DualGramMatrix-Gram}
		\end{enumerate}
	\end{lemma}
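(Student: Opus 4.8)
The plan is to produce an explicit candidate for the dual basis directly from $G^{-1}$ and then to verify everything by linear algebra over $K$, using only that the bilinear form $B_Q$ is nondegenerate on $V=KL$. Since $L$ is positive definite, the Gram matrix $G$ is totally positive definite, so in particular $\det G\neq0$ and $G$ is invertible over $K$. First I would \emph{define} the vectors $\f_j=\sum_{i=1}^n (G^{-1})_{ij}\,\e_i\in V$ and check the biorthogonality relation $B_Q(\f_i,\e_j)=\delta_{ij}$: expanding by bilinearity turns this into the matrix identity $G\,G^{-1}=I$ (using $g_{ij}=g_{ji}$), which holds by construction. This single computation already pins down the coordinate matrix $(h_{ij})=G^{-1}$ demanded in part \ref{le:DualGramMatrix-coeff}, \emph{provided} the $\f_j$ turn out to be a basis of $\dual{L}$.

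Next I would establish that $(\f_1,\dots,\f_n)$ is indeed a basis of $\dual{L}$, proving part \ref{le:DualGramMatrix-basis}, by the standard two-inclusion argument. For $\sum_j\OK\f_j\subset\dual{L}$: any $\vx\in L$ is an $\OK$-combination $\vx=\sum_k c_k\e_k$ with $c_k\in\OK$, so $B_Q(\f_j,\vx)=\sum_k c_k\,\delta_{jk}=c_j\in\OK$, whence each $\f_j\in\dual{L}$. The reverse inclusion is the crux: since $G^{-1}$ is invertible, the $\f_j$ form a $K$-basis of $V$, so any $\vw\in\dual{L}$ can be written $\vw=\sum_j a_j\f_j$ with $a_j\in K$; pairing with $\e_i$ and using biorthogonality gives $a_i=B_Q(\vw,\e_i)$, which lies in $\OK$ precisely because $\vw\in\dual{L}$ and $\e_i\in L$. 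Hence every $\vw\in\dual{L}$ has $\OK$-coordinates, so $\dual{L}=\sum_j\OK\f_j$ is free with basis $(\f_1,\dots,\f_n)$. This also retroactively justifies part \ref{le:DualGramMatrix-coeff}.

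Finally, part \ref{le:DualGramMatrix-Gram} follows by computing the Gram matrix of $\dual{L}$ in the basis $(\f_1,\dots,\f_n)$: expanding $B_Q(\f_i,\f_j)=B_Q(\f_i,\sum_k h_{kj}\e_k)=\sum_k h_{kj}\,B_Q(\f_i,\e_k)=h_{ij}=(G^{-1})_{ij}$, using biorthogonality once more. The argument is essentially routine linear algebra over the field $K$, and the only point requiring genuine care is the reverse inclusion in the second paragraph: one must invoke nondegeneracy of $B_Q$ (equivalently $\det G\neq0$) to read off the coordinates $a_i$ from the pairings $B_Q(\vw,\e_i)$, and this is exactly where positive definiteness, rather than mere integrality, enters. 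Everything else is bookkeeping with the identity $GG^{-1}=I$.
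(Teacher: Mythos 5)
Your proof is correct, and it is logically self-contained in a way the paper's is not. The paper proves part \ref{le:DualGramMatrix-basis} by citation (O'Meara, \S82.F) and then, taking the biorthogonal basis $(\f_1,\dots,\f_n)$ as given, deduces \ref{le:DualGramMatrix-coeff} from the matrix identity $GH=I_n$ and \ref{le:DualGramMatrix-Gram} by the same expansion you use. You instead run the construction in the opposite direction: you \emph{define} $\f_j=\sum_i (G^{-1})_{ij}\e_i$, verify biorthogonality from $G^{-1}G=I$, and then prove freeness of $\dual{L}$ with this basis by the two-inclusion argument --- the forward inclusion from biorthogonality, the reverse by reading off the $K$-coordinates of any $\vw\in\dual{L}$ as $a_i=B_Q(\vw,\e_i)\in\OK$. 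That reverse inclusion is exactly the content of the cited \S82.F, so your proof replaces the external reference with two short computations; what the paper's route buys is brevity, while yours buys a proof readable without O'Meara and makes explicit where nondegeneracy (i.e., $\det G\neq0$, guaranteed by positive definiteness) is used. One small point worth making explicit: part \ref{le:DualGramMatrix-coeff} as stated refers to whatever basis satisfies \ref{le:DualGramMatrix-basis}, whereas you prove it for your constructed basis; this is harmless because the biorthogonality conditions $B_Q(\f_i,\e_j)=\delta_{ij}$ determine the $\f_i$ uniquely (if $\f_i'$ also satisfied them, then $B_Q(\f_i-\f_i',\e_j)=0$ for all $j$, and nondegeneracy of $B_Q$ on $V$ forces $\f_i=\f_i'$), so your candidate is the only possible one.
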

	\begin{proof}
		Part~\ref{le:DualGramMatrix-basis} follows from \cite[\S82.F]{OMbook}.
		
		To prove \ref{le:DualGramMatrix-coeff}, denote $H=(h_{ij})_{1\leq i,j\leq n}$ and compute the element at the $i${th} row and $j${th} column in the product $GH$:
		\[
		\sum_{k=1}^ng_{ik}h_{kj}=\sum_{k=1}^nB_Q(\e_i,\e_k)h_{kj}=B_Q\Bigl(\e_i,\sum_{k=1}^nh_{kj}\e_k\Bigr)=B_Q(\e_i,\f_j)=\delta_{ij}.
		\]
		Hence, $GH=I_n$; it follows that $H=G^{-1}$.
		
		For \ref{le:DualGramMatrix-Gram}, denote $P=(p_{ij})_{1\leq i,j\leq n}$ the Gram matrix of $\dual{L}$ with respect to $(\f_1,\dots,\f_n)$. Then:
		\[
		p_{ij}
		=B_Q(\f_i,\f_j)
		=B_Q\Bigl(\f_i,\sum_{k=1}^nh_{kj}\e_k\Bigr)
		=\sum_{k=1}^nh_{kj}B_Q(\f_i,\e_k)
		=\sum_{k=1}^nh_{kj}\delta_{ik}
		=h_{ij}. \qedhere
		\]
	\end{proof}
	
	%--------------------------------------------------
	\subsection{Some useful results}
	
	The following claim is essentially (but not explicitly) contained in \cite{KKK}.
	
	\begin{proposition} \label{pr:KKK}
		If $K\ni\sqrt2$ is a totally real number field (of any degree) admitting a universal ternary classical lattice, then $\UKPlus=\UKctv$.
	\end{proposition}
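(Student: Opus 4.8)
The plan is to show directly that every totally positive unit is a square of a unit, exploiting how rigidly a positive definite lattice can represent elements of norm~$1$. First I would use that $L$ represents $1\in\OKPlus$ and invoke Lemma~\ref{le:UnitSplits} to write $L\simeq\qf1\perp M$ with $M$ a binary classical lattice. The key observation is a ``norm trichotomy'': if $\mu\in\UKPlus$ and $\mu=\iks^2+Q(\vv)$ with $\vv\in M$, then both summands lie in $\{0\}\cup\OKPlus$ and are $\preceq\mu$; since a nonzero totally positive summand $\beta\preceq\mu$ satisfies $1\le\norm{K/\Q}{\beta}\le\norm{K/\Q}{\mu}=1$, it must in fact equal $\mu$ (a strict inequality in some embedding would force $\norm{K/\Q}{\beta}<1$). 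Hence exactly one summand is nonzero, so either $\mu\in\UKctv$ or $M$ represents $\mu$.

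Next I would analyze the units represented by the binary $M$: splitting off one such unit and re‑applying the trichotomy to the resulting rank‑$1$ complement shows that $M$ represents totally positive units in at most two cosets of $\UKctv$. Together with the previous paragraph, $\UKPlus/\UKctv$ then has at most three elements; since its order is a power of $2$, it equals $1$ or $2$. It remains to exclude order $2$. So suppose $\varepsilon\in\UKPlus\setminus\UKctv$; then $M$ represents $\varepsilon$ and we may write $L\simeq\qf1\perp\qf\varepsilon\perp N$ with $N$ of rank~$1$. Note that $\varepsilon\notin\UKctv$ already forces $\varepsilon\notin(K^\times)^2$ (an element of $K$ whose square is a unit is itself a unit), i.e.\ the square class $[\varepsilon]$ is nontrivial.

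The finish is where $\sqrt2\in K$ must enter, via indecomposables and square classes. Writing $[\nu]$ for the common square class of the nonzero values of $N$, the same ``single summand'' phenomenon --- now forced by indecomposability rather than by the norm --- shows that every indecomposable $\lambda\in\OKPlus$ has $[\lambda]\in\{[1],[\varepsilon],[\nu]\}$. Crucially, $\norm{K/\Q}{\lambda\varepsilon}=\norm{K/\Q}{\lambda}<2^{[K:\Q]}$, so $\lambda\varepsilon$ is again indecomposable by Lemma~\ref{le:smallAreIndecomposable}; thus the set of realized classes is stable under multiplication by $[\varepsilon]$. A short case check, using $[1]\neq[\varepsilon]$, then forces \emph{every} indecomposable into $\{[1],[\varepsilon]\}$, i.e.\ $\lambda\in(K^\times)^2\cup\varepsilon(K^\times)^2$ for all indecomposable $\lambda$. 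To contradict this I would use $2+\sqrt2\in\OKPlus$, which is indecomposable since its norm is $2^{[K:\Q]/2}<2^{[K:\Q]}$: descending the conditions $2+\sqrt2\in(K^\times)^2$ and $(2+\sqrt2)\varepsilon\in(K^\times)^2$ along the relative norm to $\Q(\!\sqrt2)$, and using $\UKPlus[\Q(\!\sqrt2)]=\UKctv[\Q(\!\sqrt2)]$, reduces both to $(2+\sqrt2)^{[K:\Q(\!\sqrt2)]}\in(\Q(\!\sqrt2)^\times)^2$, which fails when $[K:\Q(\!\sqrt2)]$ is odd.

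The main obstacle is the residual case $[K:\Q(\!\sqrt2)]$ even (equivalently $4\mid[K:\Q]$), where the norm descent to $\Q(\!\sqrt2)$ is inconclusive. Here one needs a different indecomposable or a local argument at a dyadic prime $\ip\mid2$: choosing $\lambda$ with $v_\ip(\lambda)$ odd makes $[\lambda]_\ip$ escape the unit classes $[1]_\ip,[\varepsilon]_\ip$, again producing an indecomposable outside $\{[1],[\varepsilon]\}$. Making this uniform over all $K$ is the technical heart of the matter, and is exactly the point where the finer analysis of \cite{KKK} is required; the remaining bookkeeping (the possible non‑freeness of $N$, which does not affect square classes) is routine.
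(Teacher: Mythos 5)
Your proposal is only a partial proof, and the gap sits precisely where the proposition is hard. What you do prove is correct: the splitting $L\simeq\qf{1}\perp M$ via Lemma~\ref{le:UnitSplits}, the norm trichotomy, and the rank-one analysis of the complement show that the totally positive units represented by a ternary classical lattice occupy at most three $\UKctv$-cosets, whence $\abs{\UKPlus/\UKctv}\leq 2$; this reproduces the ``trivial'' result \cite[Prop.~2.5]{KKK} that the paper quotes for the case $\abs{\UKPlus/\UKctv}>2$. Your square-class analysis of indecomposables together with the norm descent of $2+\sqrt2$ to $\Q(\!\sqrt2)$ then correctly kills the case $\abs{\UKPlus/\UKctv}=2$ when $[K:\Q(\!\sqrt2)]$ is odd. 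But for $[K:\Q(\!\sqrt2)]$ even you offer only a sketch, and it does not go through: it requires an indecomposable $\lambda$ with odd valuation at some prime $\ip\mid 2$, and no such element need exist. The paper's field $K_{2048}=\Q\bigl(\!\sqrt{2+\sqrt2}\bigr)$ illustrates both failure modes at once: there $2+\sqrt2$ is literally a square (so your chosen witness yields no contradiction, consistent with $[K:\Q(\!\sqrt2)]=2$ being even), and $2\OK=\ip^4$, so $\sqrt2$, $2+\sqrt2=\sqrt2(1+\sqrt2)$ and all their unit multiples have even valuation at the unique dyadic prime. You acknowledge that this case must be deferred to the ``finer analysis'' of \cite{KKK} --- but that concedes exactly the point at issue, since the paper's own proof disposes of all of $\abs{\UKPlus/\UKctv}=2$ by citing \cite[Thm.~3.1(f)]{KKK}. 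Worse, the unresolved case $4\mid[K:\Q]$ contains every quartic field containing $\sqrt2$, i.e., all the fields to which this paper actually applies the proposition, so the self-contained part of your argument contributes nothing to the paper's main application.

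There is also a local error in the step establishing stability of the realized square classes under multiplication by $[\varepsilon]$: you claim $\norm{K/\Q}{\lambda}<2^{[K:\Q]}$ for \emph{every} indecomposable $\lambda$ and invoke Lemma~\ref{le:smallAreIndecomposable}. That inequality is false in general; the lemma gives a sufficient condition for indecomposability, not a bound satisfied by all indecomposables. The conclusion you need is nevertheless true for a trivial reason: if $\lambda\varepsilon=\beta+\gamma$ with $\beta,\gamma\in\OKPlus$, then $\lambda=\beta\varepsilon^{-1}+\gamma\varepsilon^{-1}$ with $\beta\varepsilon^{-1},\gamma\varepsilon^{-1}\in\OKPlus$, so multiplication by a totally positive unit preserves indecomposability. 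This slip is easily repaired; the missing even-degree case is the real gap.
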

	\begin{proof}
		We need to show that $\abs{\UKPlus/\UKctv}=1$. If $\abs{\UKPlus/\UKctv}>2$, then we can apply the trivial \cite[Prop.~2.5]{KKK} to get the non-existence of a universal ternary classical lattice. If $\abs{\UKPlus/\UKctv}=2$, then we apply \cite[Thm.~3.1(f)]{KKK}.
	\end{proof}
	
	The following lemma is useful for bounding the discriminant of a quartic field by the house of its generator. The corresponding result for any number of variables is contained in \cite{Schur} on page 378 as statement I.; we provide a proof for the reader's convenience.
	
	\begin{lemma}\label{le:4variables}
		The maximum of the function
		\[
		g(\iks_1,\iks_2,\iks_3,\iks_4) = \abs{\iks_4-\iks_3}\abs{\iks_4-\iks_2}\abs{\iks_4-\iks_1}\abs{\iks_3-\iks_2}\abs{\iks_3-\iks_1}\abs{\iks_2-\iks_1}
		\]
		on the set $[-1,1]^4 \subset \R^4$ is $2^6\cdot5^{-5/2}$.
	\end{lemma}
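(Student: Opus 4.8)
The plan is to reduce this four-variable optimization to a one-variable problem by exploiting the symmetry and monotonicity of $g$, and then to locate the maximizing configuration explicitly. Since $g$ is continuous and $[-1,1]^4$ is compact, the maximum is attained. The function $g$ is (the absolute value of) a Vandermonde-type product, hence invariant under permutations of the coordinates, so it suffices to maximize over the ordered region $-1 \le \iks_1 \le \iks_2 \le \iks_3 \le \iks_4 \le 1$; there every factor $\iks_j - \iks_i$ (for $i<j$) is nonnegative, so $g = \prod_{i<j}(\iks_j - \iks_i)$.

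First I would push the two extreme coordinates to the endpoints of the interval. Viewing $g$ as a function of $\iks_4$ alone (all others fixed), the only factors depending on $\iks_4$ are $(\iks_4 - \iks_1)(\iks_4-\iks_2)(\iks_4-\iks_3)$, each nonnegative and nondecreasing on $[\iks_3, 1]$; hence $g$ is nondecreasing in $\iks_4$ and is maximized at $\iks_4 = 1$. Symmetrically, $g$ is nonincreasing in $\iks_1$, so the maximum forces $\iks_1 = -1$. Thus it remains to maximize, over $-1 \le \iks_2 \le \iks_3 \le 1$,
\[
g = 2(\iks_2+1)(1-\iks_2)(\iks_3+1)(1-\iks_3)(\iks_3 - \iks_2) = 2(1-\iks_2^2)(1-\iks_3^2)(\iks_3 - \iks_2),
\]
where I have grouped the factors so that the symmetric dependence becomes visible.

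The key simplification is that this expression vanishes on the entire boundary of the triangle $\{-1 \le \iks_2 \le \iks_3 \le 1\}$ (namely where $\iks_2 = -1$, $\iks_3 = 1$, or $\iks_2 = \iks_3$), while it is strictly positive in the interior; hence the maximum occurs at an interior critical point. Setting the two partial derivatives to zero and cancelling the nonvanishing factors $(1-\iks_3^2)$ and $(1-\iks_2^2)$ yields the system $3\iks_2^2 - 2\iks_2\iks_3 - 1 = 0$ and $3\iks_3^2 - 2\iks_2\iks_3 - 1 = 0$. Subtracting gives $\iks_2^2 = \iks_3^2$, so in the interior (where $\iks_2 < \iks_3$) necessarily $\iks_2 = -\iks_3$ with $\iks_3 > 0$; substituting back gives $5\iks_3^2 = 1$, i.e.\ $\iks_3 = 1/\sqrt5$. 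Plugging $(\iks_2, \iks_3) = (-1/\sqrt5, 1/\sqrt5)$ into the displayed formula gives $2 \cdot (4/5)^2 \cdot (2/\sqrt5) = 64 \cdot 5^{-5/2} = 2^6 \cdot 5^{-5/2}$, as claimed.

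No step here is genuinely hard, but the point that requires the most care is the reduction argument: one must justify both the permutation symmetry (so that restricting to the ordered simplex is legitimate) and the simultaneous monotonic pushing of $\iks_1, \iks_4$ to the endpoints before the two-variable analysis can begin. Once the problem is cut down to the triangle, the vanishing-on-the-boundary observation makes the critical-point computation clean, and the symmetric structure forces the unique interior critical point, thereby avoiding any messy case analysis.
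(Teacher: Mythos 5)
Your proposal is correct and follows essentially the same route as the paper: push the extreme coordinates to $\pm1$, reduce to the two-variable function $2(1-\iks_2^2)(1-\iks_3^2)(\iks_3-\iks_2)$, and locate the interior critical point $\iks_2=-\iks_3=-1/\sqrt5$ via the partial derivatives. Your write-up is in fact slightly tidier than the paper's (the ordered-simplex reduction and the observation that the function vanishes on the boundary of the triangle replace the paper's informal dismissal of \enquote{trivial local extremes}, and you evaluate the maximum explicitly), but these are refinements of the same argument, not a different one.
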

	\begin{proof}
		Clearly, to maximise the value, one of the variables must be $-1$ and one must be $+1$. Indeed, if the largest variable is replaced by $1$ while the others are kept constant, the value of $g$ increases; similarly for the smallest variable and $-1$.
		
		So it is enough to find the maximum and minimum of the function
		\begin{align*}
			f(\iks,\ips) &= 2(1-\iks)(1-\ips)(-1-\iks)(-1-\ips)(\iks-\ips) \\
			&= 2(1-\iks^2)(1-\ips^2)(\iks-\ips)
		\end{align*}
		for $|\iks|,|\ips| \leq 1$. By computing the partial derivatives, we get the following system of equations:
		\[
		2(1-\ips^2)(1+2\iks \ips-3\iks^2) = 0 \qquad \text{and} \qquad 2(1-\iks^2)(-1-2\iks\ips+3\ips^2)=0.
		\]
		We are not interested in the trivial local extremes where $|\iks|=1$ or $|\ips|=1$, which correspond to minima and not maxima of $g$, so we can ignore the first brackets; it remains to solve the system
		\[
		1+2\iks\ips-3\iks^2 = 0 \qquad \text{and} \qquad -1-2\iks\ips+3\ips^2=0.
		\]
		By summing the two equations and again ignoring the trivial local extreme where $\iks=\ips$, we obtain the condition $\iks+\ips=0$. Plugging it back into either equation yields $1-2\iks^2-3\iks^2=0$, so $\iks=-\ips=\pm\frac{1}{\sqrt5}$. All in all, the original function $g$ has a maximum at those points where the four variables all take distinct values from $\bigl\{\pm1, \pm\frac{1}{\sqrt5}\bigr\}$. (Recall that a global maximum on a compact set must exist, and this is the only candidate.) 
	\end{proof}
	
	\begin{proposition} \label{pr:KubasBound}
		Let $K=\Q(\beta)$ be a quartic totally real number field, $\beta\in\OK$. Then
		\[
		\disc K\leq\frac{2^{12}}{5^5}\house{\beta}^{12}.
		\]
	\end{proposition}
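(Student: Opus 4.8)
The plan is to reduce the statement entirely to Lemma~\ref{le:4variables} via the classical relation between the discriminant of a field and that of a generator. Since $K=\Q(\beta)$, the element $\beta$ has a minimal polynomial $f$ of degree $4$, and $\Z[\beta]\subset\OK$ is an order, so
\[
\disc f = [\OK:\Z[\beta]]^2\cdot\disc K.
\]
Writing $x_i=\sigma_i(\beta)$ for the four conjugates of $\beta$ (which are real because $K$ is totally real, and pairwise distinct because $\beta$ generates a field of degree $4$), the polynomial discriminant is $\disc f=\prod_{i<j}(x_i-x_j)^2$, which is strictly positive. As the index $[\OK:\Z[\beta]]^2$ is a positive integer, this already gives
\[
\disc K\leq\disc f=\prod_{i<j}(x_i-x_j)^2.
\]

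Next I would recognise the right-hand side as $g(x_1,x_2,x_3,x_4)^2$, where $g$ is exactly the function of Lemma~\ref{le:4variables} (a product over the $\binom{4}{2}=6$ pairs). Setting $H=\house{\beta}=\max_i\abs{x_i}$, each quotient $x_i/H$ lies in $[-1,1]$. Since $g$ is homogeneous of degree $6$, I can factor out $H^6$ and invoke the lemma:
\[
\abs{g(x_1,x_2,x_3,x_4)}=H^6\,\bigl|g(x_1/H,\dots,x_4/H)\bigr|\leq H^6\cdot\frac{2^6}{5^{5/2}}.
\]
Squaring this and combining it with the previous display yields
\[
\disc K\leq g(x_1,x_2,x_3,x_4)^2\leq H^{12}\cdot\frac{2^{12}}{5^5}=\frac{2^{12}}{5^5}\house{\beta}^{12},
\]
which is the desired bound. (Note $H>0$, since $\beta\notin\Q$.)

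Once Lemma~\ref{le:4variables} is available there is essentially no remaining obstacle; the only points requiring a moment's care are the direction of the inequality $\disc K\leq\disc f$ (which relies on the index being an integer and on both discriminants being positive in the totally real case) and the degree-$6$ homogeneity of $g$, which is precisely what lets the single quantity $\house{\beta}$ control the entire product of differences. The genuinely hard analytic step -- pinning down the exact optimal constant $2^6\cdot5^{-5/2}$ -- has already been carried out in the lemma, so the proposition follows by a short and formal computation.
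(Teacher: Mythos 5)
Your proof is correct and follows essentially the same route as the paper: the paper also bounds $\disc K$ by the discriminant $\Delta_{K/\Q}(\beta)$ of the generator (which equals your $\disc f$, via the Vandermonde determinant), rescales the conjugates by $\house{\beta}$ to land in $[-1,1]^4$, and applies Lemma~\ref{le:4variables}. Your explicit justification of $\disc K\leq\disc f$ via the index formula and the positivity of both discriminants is a slightly more detailed write-up of the same step, so there is nothing to add.
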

	\begin{proof}
		To estimate the discriminant of the field, we use the value $\Delta_{K/\Q}(\beta)$ called the \emph{discriminant} of $\beta$, which is the square of the determinant of the matrix $\bigl(\sigma_i(\beta^{j-1})\bigr)_{1\leq i,j\leq4}$. Then ${\disc K\leq\Delta_{K/\Q}(\beta)}$.
		
		Let $\beta_i = \sigma_i(\beta)$ be the conjugates of $\beta$. Using the Vandermonde determinant, we can express the discriminant of $\beta$ as
		\[
		\Delta_{K/\Q}(\beta) = \bigl((\beta_4-\beta_3)(\beta_4-\beta_2)(\beta_4-\beta_1)(\beta_3-\beta_2)(\beta_3-\beta_1)(\beta_2-\beta_1) \bigr)^2.
		\]
		Put $x_i := \beta_i / \;\! \house{\beta}$. Then $-1\leq x_i \leq 1$ for all $i$, and we get
		\[
		\Delta_{K/\Q}(\beta) = \bigl((x_4-x_3)(x_4-x_2)(x_4-x_1)(x_3-x_2)(x_3-x_1)(x_2-x_1) \bigr)^2\,\house{\beta}^{12}.
		\]
		By Lemma~\ref{le:4variables}, we know the maximal possible absolute value of the bracket, which yields
		\[
		\Delta_{K/\Q}(\beta) \leq \frac{2^{12}}{5^5} \house{\beta}^{12}. \qedhere
		\]
	\end{proof}
	%========================================
	
	\section{Lifting from \texorpdfstring{$\Q(\!\sqrt2)$}{Q(sqrt2)}} \label{se:lifting}
	
	In this section, we look at the ternary classical quadratic forms that are universal over $\Q(\!\sqrt2)$. By \cite{CKR}, these are precisely the quadratic forms that are isometric to one of the following four:
	\begin{equation}\begin{array}{l} \label{eq:ListQFs2}
			Q_1(\iks,\ips,\iss)=\iks^2+\ips^2+(2+\sqrt2)\iss^2,\\ [2mm]
			Q_2(\iks,\ips,\iss)=\iks^2+(2+\sqrt2)\ips^2+2\ips\iss+(2-\sqrt2)\iss^2,\\[2mm]
			Q_3(\iks,\ips,\iss)=\iks^2+(2+\sqrt2)\ips^2+2\ips\iss+3\iss^2,\\[2mm]
			Q_3'(\iks,\ips,\iss)=\iks^2+(2-\sqrt2)\ips^2+2\ips\iss+3\iss^2.
	\end{array}\end{equation}
	
	We lift these forms to a larger field and ask whether they are still universal. By \cite[Thm.~1.1]{KKL-Lifting}, for every fixed degree $d$, there are at most finitely many fields $K$ with $[K:\Q(\!\sqrt2)]=d$ over which any of these quadratic forms is universal. We did not find any such field -- on the contrary, we give several conditions on $K$ under which the forms are not universal; see Theorem~\ref{th:QFNotUniversal}. In fact, we expect that none of these forms is universal over any proper extension of $\Q(\!\sqrt2)$, because quite possibly all fields of degree $> 4$ satisfy condition \ref{th:QFNotUniversal-2OK} from Theorem~\ref{th:QFNotUniversal}. Note that this conjecture about lifting would also immediately follow from our strengthened form of Kitaoka's conjecture~\ref{co:KitaokaStrong}.
	
	Moreover, if a quadratic form universal over a totally real number field $F$ fails to be universal over an extension of $F$, it has implications on the so-called universality criterion set.
	
	\begin{definition}
		Let $K$ be a totally real number field. We say that $S\subset\OKPlus$ is a \emph{universality criterion set} if for every classical quadratic form $Q$ over $K$ we have the following equivalence: $Q$ is universal if and only if $Q$ represents all $\alpha\in S$.
	\end{definition}
	
	Let $F\subsetneq K$ be totally real number fields. Clearly, $\OKPlus[F]$ is a universality criterion set for $F$. But can $\OKPlus[F]$ be a universality criterion set for $K$? This question, which can naturally be called \emph{lifting problem for universality criterion sets}, was asked in \cite[Section~6]{KKR-CriterionSets}. The authors have shown that if $F=\Q$ or $F=\Q(\!\sqrt5)$, then it is not possible for any $K$. Their seemingly easy proof of this claim is based on the very strong result of Siegel \cite{Si} that the sum of any number of squares is not universal over any field apart from $\Q$ and $\Q(\!\sqrt5)$. 
	
	We study the analogous question for extensions of $F=\Q(\!\sqrt2)$. As an application of Theorem~\ref{th:QFNotUniversal}, we show for many fields $K$ that $\OKPlus[F]$ is not a universality criterion set for $K$ (see Corollary~\ref{co:NotCriterion}).
	
	%--------------------
	\subsection{Sums of squares} 
	
	We start by showing that if $Q$ is one of the quadratic forms in \eqref{eq:ListQFs2}, then anything that is represented by $2Q$ is also represented by the sum of four squares (three in the case of $Q_2$). This will be essential for the proof of part \ref{th:QFNotUniversal-2OK} of Theorem~\ref{th:QFNotUniversal}.
	
	\begin{lemma} \label{le:2Q}
		Let $Q$ be one of the quadratic forms in \eqref{eq:ListQFs2}. Then $2Q$ is represented by the quadratic form $\iks_1^2+\iks_2^2+\iks_3^2+\iks_4^2$.
	\end{lemma}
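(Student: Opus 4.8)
The statement asserts, for each $Q$, the existence of a matrix $M\in\OK^{4\times3}$ with $2Q(\boldsymbol{\iks})=(M\boldsymbol{\iks})^{\mathrm T}(M\boldsymbol{\iks})$; equivalently, that $2Q$ can be written as a sum of (at most) four squares of $\OK$-linear forms in the three variables $\iks,\ips,\iss$. The plan is simply to exhibit such linear forms explicitly for each of the four quadratic forms and then verify the expansion. Two elementary identities over $\OK=\Z[\sqrt2]$ drive everything: first, since $\sqrt2\in\OK$, we have $2\alpha^2=(\sqrt2\,\alpha)^2$; second, $4\pm2\sqrt2=(1\pm\sqrt2)^2+1^2$.

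For $Q_1=\iks^2+\ips^2+(2+\sqrt2)\iss^2$ I would dispatch the two unit coefficients via $(\iks+\ips)^2+(\iks-\ips)^2=2\iks^2+2\ips^2$, and the remaining term via $(4+2\sqrt2)\iss^2=\bigl((1+\sqrt2)\iss\bigr)^2+\iss^2$, so that $2Q_1=(\iks+\ips)^2+(\iks-\ips)^2+\bigl((1+\sqrt2)\iss\bigr)^2+\iss^2$. For the three forms carrying the cross term $2\ips\iss$, I would first peel off $2\iks^2=(\sqrt2\,\iks)^2$, reducing the task to writing the binary part in $\ips,\iss$ as a sum of squares. For $Q_3$ the binary part $(4+2\sqrt2)\ips^2+4\ips\iss+6\iss^2$ is matched by $\bigl((1+\sqrt2)\ips\bigr)^2+(\ips+2\iss)^2+(\sqrt2\,\iss)^2$, and flipping the sign of $\sqrt2$ handles $Q_3'$. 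For $Q_2$ only three squares are needed in total, the binary part splitting as two squares: I expect $2Q_2=(\sqrt2\,\iks)^2+\bigl((1+\sqrt2)\ips+\iss\bigr)^2+\bigl(\ips+(1-\sqrt2)\iss\bigr)^2$.

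Since everything reduces to verifying polynomial identities over $\OK$, the argument is a finite and routine check once the linear forms are in hand; the only genuine work is finding them. The main obstacle is exactly the cross term $2\ips\iss$: the candidate squares must simultaneously reproduce the two diagonal coefficients and the off-diagonal coefficient. Concretely, expressing the binary part as $N^{\mathrm T}N$ with $N$ over $\OK$ amounts to choosing column vectors with prescribed squared lengths (such as $4\pm2\sqrt2$ and $6$) and prescribed inner product ($2$), i.e. solving a small system of norm and inner-product equations over $\Z[\sqrt2]$. I would solve this by hand, guided by the two identities above (which already supply sum-of-two-squares representations of $4\pm2\sqrt2$), and then confirm that the resulting forms expand to $2Q$.
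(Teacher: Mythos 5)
Your proposal is correct and takes essentially the same approach as the paper: both proofs simply exhibit explicit sum-of-squares decompositions of $2Q$ over $\Z[\sqrt2]$, and indeed your formulas for $2Q_3$ and $2Q_3'$ coincide with the paper's, while your variants for $2Q_1$ and $2Q_2$ differ only in the choice of linear forms and verify correctly. Your three-square expression for $2Q_2$ even preserves the extra information the paper records in Remark~\ref{re:2Q2}.
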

	\begin{proof}
		We have
		\begin{align*}
			2Q_1(\iks,\ips,\iss) &= (\!\sqrt2\iks)^2 + (\!\sqrt2\ips)^2 + \bigl((1+\sqrt2)\iss\bigr)^2 + \iss^2,\\
			2Q_2(\iks,\ips,\iss) &= (\!\sqrt2\iks)^2+\bigl((1+\sqrt2)\ips+(-1+\sqrt2)\iss\bigr)^2+(\ips+\iss)^2,\\
			2Q_3(\iks,\ips,\iss)&= (\!\sqrt2\iks)^2 + \bigl((1+\sqrt2)\ips\bigr)^2+(\ips+2\iss)^2+(\!\sqrt2\iss)^2,\\ 
			2Q_3'(\iks,\ips,\iss)&= (\!\sqrt2\iks)^2 + \bigl((1-\sqrt2)\ips\bigr)^2+(\ips+2\iss)^2+(\!\sqrt2\iss)^2. \qedhere
		\end{align*}  
	\end{proof}
	
	\begin{remark} \label{re:2Q2}
		From the proof we see that $2Q_2$ is in fact represented by $\iks_1^2+\iks_2^2+\iks_3^2$.
	\end{remark}

	%--------------------
	\subsection{Indecomposable elements}
	
	The aim is to give a description of all indecomposable elements in fields over which at least one of the quadratic forms from \eqref{eq:ListQFs2} is universal. 
	
	In the following lemma, we need a slight generalisation of the term \enquote{indecomposable}. Recall that $\ms(\alpha)$ is the signature of $\alpha$. We say that $\alpha\in\OK$ is \emph{$\ms$-indecomposable}, if there do not exist $\beta, \gamma\in\OK$ with $\ms(\beta)=\ms(\gamma)$ such that $\alpha=\beta+\gamma$. Note that if $\alpha\in\OKPlus$, then $\alpha$ is $\ms$-indecomposable if and only if it is indecomposable.

	\begin{lemma}\label{le:SigmaIndecOfSquares}
		Let $\alpha\in\OKPlus$ be indecomposable and $\alpha=\beta\tau^2$ for some $\beta, \tau\in\OK$. Then $\beta$ is indecomposable and $\tau$ is $\ms$-indecomposable.
	\end{lemma}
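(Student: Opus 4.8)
The plan is to treat the two conclusions separately, taking as a common preliminary the fact that $\beta$ is itself totally positive. Since $\alpha=\beta\tau^2\succ0$, no embedding of $\alpha$ vanishes; as $\sigma_i(\tau^2)=\sigma_i(\tau)^2\geq0$, this forces $\sigma_i(\tau)\neq0$ for every $i$, and hence $\sigma_i(\beta)=\sigma_i(\alpha)/\sigma_i(\tau)^2>0$. Thus $\beta\in\OKPlus$ and, moreover, $\tau^2\succ0$, which is what makes the first half of the argument run smoothly.

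For the indecomposability of $\beta$ I would argue by contraposition. If $\beta=\beta_1+\beta_2$ with $\beta_1,\beta_2\in\OKPlus$, then multiplying through by $\tau^2$ gives $\alpha=\beta_1\tau^2+\beta_2\tau^2$. Since $\tau^2\succ0$, both summands lie in $\OKPlus$, contradicting the indecomposability of $\alpha$. This step is essentially immediate.

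The $\ms$-indecomposability of $\tau$ is the part requiring the extra idea, and I expect it to be the main point of the lemma. Suppose $\tau=\tau_1+\tau_2$ with $\tau_1,\tau_2\in\OK$ nonzero and $\ms(\tau_1)=\ms(\tau_2)$. Because $\tau_1$ and $\tau_2$ share a sign in each embedding, so does their sum; hence $\ms(\tau)=\ms(\tau_1)=\ms(\tau_2)$. The key move is then to factor $\alpha$ not as $\beta(\tau_1+\tau_2)^2$ (which would produce three terms, including an awkward cross term), but as
\[
\alpha=\beta\tau\cdot\tau=\beta\tau(\tau_1+\tau_2)=\beta\tau\tau_1+\beta\tau\tau_2.
\]
In each embedding $\sigma_i(\beta)>0$, while $\sigma_i(\tau)$ and $\sigma_i(\tau_j)$ share a sign, so $\sigma_i(\beta\tau\tau_j)>0$; thus both summands lie in $\OKPlus$, again contradicting the indecomposability of $\alpha$.

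The only subtlety — and the reason $\ms$-indecomposability is the correct notion here rather than plain indecomposability — is that $\tau_1$ and $\tau_2$ need not be totally positive, so one cannot simply multiply the relation $\tau=\tau_1+\tau_2$ by $\tau^2$. Instead one multiplies by the single factor $\tau$, whose signature matches that of $\tau_1$ and $\tau_2$; this is precisely what converts each mixed product $\beta\tau\tau_j$ into a totally positive element and closes the argument.
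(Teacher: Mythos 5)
Your proof is correct, and in the second half it uses a different decomposition from the paper's. The preliminary step (total positivity of $\beta$) and the contraposition argument for the indecomposability of $\beta$ coincide with the paper's proof. For the $\ms$-indecomposability of $\tau$, however, the paper simply expands the square: $\alpha=\beta\tau_1^2+2\beta\tau_1\tau_2+\beta\tau_2^2$, and notes that the cross term you call \enquote{awkward} is in fact harmless, since $\ms(\tau_1)=\ms(\tau_2)$ gives $\tau_1\tau_2\in\OKPlus$, so all three summands are totally positive and $\alpha$ is decomposable. Your variant instead multiplies by $\tau$ only once, $\alpha=\beta\tau\tau_1+\beta\tau\tau_2$, which requires the extra (easy) observation that $\ms(\tau)=\ms(\tau_1)=\ms(\tau_2)$ but then lands directly on a two-term totally positive decomposition. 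Both arguments rest on the same single idea -- equal signatures make products totally positive -- so neither is more general; yours is marginally more economical (two summands, no cross term), while the paper's avoids having to determine the signature of $\tau$ at all.
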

	\begin{proof}
		Clearly, $\beta$ is totally positive. First, suppose that it is decomposable: $\beta=\beta_1+\beta_2$ for some $\beta_1,\beta_2\in\OKPlus$. Then $\alpha=\beta_1\tau^2+\beta_2\tau^2$, i.e., $\alpha$ is decomposable.
		
		Now assume that $\tau=\tau_1+\tau_2$ for some $\tau_1,\tau_2\in\OK$ such that $\ms(\tau_1)=\ms(\tau_2)$. Note that then $\tau_1\tau_2\in\OKPlus$. We get $\alpha=\beta\tau_1^2+2\beta\tau_1\tau_2+\beta\tau_2^2$, i.e., $\alpha$ is decomposable.
	\end{proof}
	
	The $\ms$-indecomposable elements are particularly easy to understand if $\UKPlus=\UKctv$: In that case, there exist units of all signatures, so for every nonzero $\beta\in\OK$ there is some $\eta\in\UK$ such that $\eta \beta \in \OKPlus$; then $\beta$ is $\ms$-indecomposable if and only if $\eta\beta$ is indecomposable. In the following lemma, we show that we can \enquote{unsquare} any indecomposable element.
	
	\begin{lemma} \label{le:UnsquaringIndecomposables}
		Let $K$ be a totally real number field with $\UKPlus=\UKctv$. Then for every nonzero $\alpha\in\OK \setminus\UK$ we can find $\mu\in\UK$ and nonsquare $\beta\in\OKPlus$ such that $\alpha=\mu\beta^{2^k}$ for some $k\geq0$.  Moreover, if $\alpha$ is totally positive and indecomposable, then $\beta$ is indecomposable as well.
	\end{lemma}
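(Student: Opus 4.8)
The plan is to run a greedy \enquote{unsquaring} procedure on $\alpha$, prove that it terminates by a norm estimate, and then repair signatures using the standing hypothesis $\UKPlus=\UKctv$. First I would consider the set of exponents $j\geq 0$ for which $\alpha=\mu\gamma^{2^j}$ admits a solution with $\mu\in\UK$ and $\gamma\in\OK$; this set is nonempty because $j=0$, $\gamma=\alpha$, $\mu=1$ works. To bound it, observe that in any such representation $\gamma$ is a non-unit (otherwise $\alpha$ would be a unit, contradicting $\alpha\in\OK\setminus\UK$), so $\abs{\norm{K/\Q}{\gamma}}\geq 2$; taking norms gives $\abs{\norm{K/\Q}{\alpha}}=\abs{\norm{K/\Q}{\gamma}}^{2^j}\geq 2^{2^j}$, and since $\abs{\norm{K/\Q}{\alpha}}$ is a fixed integer $\geq 2$ this forces $2^j\leq\log_2\abs{\norm{K/\Q}{\alpha}}$. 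Hence a maximal exponent exists; I denote it $k$ and fix corresponding witnesses $\mu,\gamma$ (note $\gamma\neq 0$ as $\alpha\neq 0$).

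Next I would normalise the signature. Because $\UKPlus=\UKctv$, there exist units of all signatures, so (as recorded above) there is $\eta\in\UK$ with $\eta\gamma\in\OKPlus$. Setting $\beta:=\eta\gamma\in\OKPlus$ and $\mu':=\mu\eta^{-2^k}$ yields $\alpha=\mu'\beta^{2^k}$ with $\beta$ totally positive. That $\beta$ is nonsquare is then immediate from maximality: if $\beta=\rho^2$ for some $\rho\in\OK$, then $\alpha=\mu'\rho^{2^{k+1}}$ would place $k+1$ in the exponent set, contradicting the choice of $k$. This proves the first assertion.

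For the \enquote{moreover} part, assume $\alpha\in\OKPlus$ is indecomposable. Since $\beta^{2^k}\succ 0$ and $\alpha\succ 0$, the unit $\mu'$ is totally positive. If $k=0$, then $\alpha=\mu'\beta$, and multiplication by the totally positive unit $\mu'$ is a sum-respecting bijection of $\OKPlus$ onto itself, so $\beta$ inherits indecomposability from $\alpha$. If $k\geq 1$, I would instead write $\alpha=(\mu'\beta^{2^k-2})\,\beta^2$, where $\mu'\beta^{2^k-2}\in\OK$ because $2^k-2\geq 0$, and apply Lemma~\ref{le:SigmaIndecOfSquares} with $\beta^2$ playing the role of the square factor: it yields that $\beta$ is $\ms$-indecomposable, and since $\beta\in\OKPlus$ this coincides with indecomposability.

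The main engine of the argument is the norm bound guaranteeing that the unsquaring halts, producing a well-defined maximal $k$. The subtlest bookkeeping point is to see that the totally-positive normalisation $\gamma\rightsquigarrow\beta$ cannot secretly reintroduce a square: this is safe precisely because the maximal exponent $k$ is an invariant of $\alpha$ alone, independent of the chosen witness, so the normalised representation $\alpha=\mu'\beta^{2^k}$ must also be maximal. The only remaining delicate step is the small case split in the last paragraph, where for $k\geq 1$ one must feed $\beta$ into the \emph{square} slot of Lemma~\ref{le:SigmaIndecOfSquares} and then convert the resulting $\ms$-indecomposability back to genuine indecomposability using total positivity of $\beta$.
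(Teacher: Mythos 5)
Your proof is correct, and it reaches the same two pillars as the paper's argument (norms bound the unsquaring; $\UKPlus=\UKctv$ supplies units of all signatures to restore total positivity), but it is organized differently at both stages. For the existence part, the paper runs an explicit iteration: it normalises $\alpha_0=\eta_0\alpha\succ0$, repeatedly replaces $\alpha_i$ by $\eta_{i+1}\sqrt{\alpha_i}\succ0$ as long as $\alpha_i$ is a square, shows termination because the norm halves (as an exponent) at each step, and then collects the accumulated unit $\prod_i\eta_i^{2^i}$ into $\mu$; the nonsquareness of $\beta$ is exactly the stopping condition. You instead make an extremal argument over all representations $\alpha=\mu\gamma^{2^j}$, bounding $2^j\leq\log_2\abs{\norm{K/\Q}{\alpha}}$, taking $k$ maximal, and getting nonsquareness of $\beta$ for free from maximality -- this neatly avoids the paper's unit bookkeeping across iterations, and your observation that $k$ is an invariant of $\alpha$ alone makes the final signature normalisation painless. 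For the \enquote{moreover} part, the paper's proof is a one-line contrapositive: from $\alpha=\mu\beta^{2^k}$ with $\mu$ necessarily totally positive, a decomposition $\beta=\beta_1+\beta_2$ gives $\alpha=\mu\beta^{2^k-1}\beta_1+\mu\beta^{2^k-1}\beta_2$, so $\beta$ decomposable forces $\alpha$ decomposable, uniformly in $k$. You reach the same conclusion via a case split, routing the case $k\geq1$ through Lemma~\ref{le:SigmaIndecOfSquares} (with $\beta$ in the square slot) and the equivalence of $\ms$-indecomposability with indecomposability for totally positive elements; this is valid but slightly heavier machinery than needed -- the direct contrapositive handles both cases at once.
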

	\begin{proof}
		Let $\alpha_0=\eta_0\alpha$ with $\eta_{0}\in\UK$ such that $\alpha_0\succ0$. We iteratively define sequences of $\alpha_i$'s and $\eta_i$'s: for each $i\geq0$, if $\alpha_i$ is not a square, then we set $\beta=\alpha_i$ and $k=i$, and we stop. Otherwise, thanks to the existence of units of all signatures, we can put $\alpha_{i+1}=\eta_{i+1}\sqrt{\alpha_i}$ with $\eta_{i+1}\in\UK$ chosen so that $\alpha_{i+1}\succ0$. Then $\norm{K/\Q}{\alpha_{i+1}}=\norm{K/\Q}{\alpha_i}^{1/2}$. In particular, since the norm is an integer greater than $1$, it decreases in each step, so the process is finite. It follows that 
		\[
		\beta^{2^k}=\alpha_k^{2^k}=\alpha\prod_{i=0}^k\eta_i^{2^i}.
		\]
		Setting $\mu^{-1}=\prod_{i=0}^k\eta_i^{2^i}$, the claim follows.
		
		Finally, using the equality $\alpha=\mu\beta^{2^k}$, we see that if $\beta$ is decomposable, then so is $\alpha$.
	\end{proof}
	
	The following lemma, although simple, provides an interesting insight into the possible structure of indecomposables in a totally real number field $K$. We remark that one could easily formulate a generalisation where $2+\sqrt2$ is replaced by another element of small norm or trace.
	
	\begin{lemma} \label{le:indecStrengthen}
		Let $K\ni\sqrt2$ be a totally real number field where every indecomposable element is of the form $\square$ or $(2+\sqrt2)\square$. Then the set of indecomposable elements in $K$ is precisely $\UKctv \cup (2+\sqrt2)\UKctv$. 
	\end{lemma}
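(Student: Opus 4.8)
The plan is to prove the two inclusions separately; the easy direction is a norm computation, and the hard direction is a descent on the norm.

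\emph{Every element of $\UKctv\cup(2+\sqrt2)\UKctv$ is indecomposable.} Put $m=[K:\Q(\!\sqrt2)]$, so $[K:\Q]=2m$. By transitivity of the norm together with $\norm{\Q(\!\sqrt2)/\Q}{2+\sqrt2}=2$, we get $\norm{K/\Q}{2+\sqrt2}=2^m$. Hence for any unit $u$, both $u^2$ and $(2+\sqrt2)u^2$ are totally positive (note that $2+\sqrt2\succ0$, since every embedding sends $\sqrt2$ to $\pm\sqrt2$) with norms $1$ and $2^m$ respectively, and $2^m<2^{2m}=2^{[K:\Q]}$. So Lemma~\ref{le:smallAreIndecomposable}(a) applies and both are indecomposable.

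\emph{A reduction.} I would first note that the hypothesis forces $\UKPlus=\UKctv$. Any totally positive unit $\ve$ has norm $1$, hence is indecomposable, so by assumption $\ve=\square$ or $\ve=(2+\sqrt2)\square$; the second is impossible, as its norm would be a multiple of $2^m\geq2$. Thus $\ve=\omega^2$ with $\norm{K/\Q}{\omega}=\pm1$, i.e.\ $\ve\in\UKctv$. This equality is crucial: it guarantees units of all signatures, so that for every nonzero $\beta$ there is a unit $\eta$ with $\eta\beta\succ0$.

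\emph{Every indecomposable lies in $\UKctv\cup(2+\sqrt2)\UKctv$.} Call an indecomposable element \emph{bad} if it is not in this set; suppose a bad element exists and choose $\alpha$ of minimal norm. By the hypothesis $\alpha=\omega^2$ or $\alpha=(2+\sqrt2)\omega^2$. Multiplying $\alpha$ by a suitable square of a unit — which changes neither the norm, nor indecomposability, nor membership in $\UKctv\cup(2+\sqrt2)\UKctv$ — I may assume $\omega\succ0$. Since a totally positive unit lies in $\UKctv$, a bad element is not a unit, so $\norm{K/\Q}{\omega}\geq2$ and in both cases $\norm{K/\Q}{\omega}<\norm{K/\Q}{\alpha}$. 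By Lemma~\ref{le:SigmaIndecOfSquares} (with $\tau=\omega$) together with the fact that a totally positive $\ms$-indecomposable element is indecomposable, the element $\omega$ is itself indecomposable; by minimality it cannot be bad. I then derive a contradiction from each possibility for $\omega$. If $\omega=u^2$ for a unit $u$, then $\alpha$ equals $u^4$ or $(2+\sqrt2)u^4$, so $\alpha$ is not bad. If $\omega=(2+\sqrt2)v^2$ for a unit $v$, then $\alpha=(2+\sqrt2)^2v^4$ or $\alpha=(2+\sqrt2)^3v^4$; here I use the identities $(2+\sqrt2)^2=2(1+\sqrt2)^2$ and $(2+\sqrt2)^3=2(10+7\sqrt2)$ to write $\alpha=\kappa+\kappa$ with $\kappa=(1+\sqrt2)^2v^4$ or $\kappa=(10+7\sqrt2)v^4$ totally positive, contradicting indecomposability of $\alpha$. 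This eliminates the minimal bad element and proves the inclusion.

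The main obstacle is precisely the sub-case $\omega=(2+\sqrt2)v^2$: one must exhibit a genuine decomposition of $(2+\sqrt2)^2\square$ and $(2+\sqrt2)^3\square$ into two totally positive summands. The key is the observation that $(2+\sqrt2)^2$ and $(2+\sqrt2)^3$ are each twice a totally positive element of $\Q(\!\sqrt2)$, and that total positivity survives in $K$ because every embedding sends $\sqrt2$ to $\pm\sqrt2$; this makes the relevant products manifestly of the form $\kappa+\kappa$. Everything else is routine bookkeeping with norms and signatures.
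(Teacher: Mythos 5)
Your proof is correct, and its first two steps (indecomposability of $\UKctv \cup (2+\sqrt2)\UKctv$ via Lemma~\ref{le:smallAreIndecomposable}, and the deduction $\UKPlus=\UKctv$ from the hypothesis applied to totally positive units) coincide with the paper's; the difference is in how the descent is organized. The paper invokes its Lemma~\ref{le:UnsquaringIndecomposables}: writing a hypothetical offending indecomposable as $\alpha^2$ or $(2+\sqrt2)\alpha^2$ with $\alpha\notin\UK$, it unsquares $\alpha^2$ all the way to $\alpha^2=\mu\beta^{2^k}$ with $\beta$ a \emph{nonsquare} indecomposable, forces $\beta=(2+\sqrt2)\gamma^2$, and derives the contradiction from the decomposability of $(2+\sqrt2)^{2^k}$ when $k\geq1$, or from $\beta=\square$ when $k=0$. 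You replace that machinery by a minimal-norm counterexample plus a single unsquaring step: Lemma~\ref{le:SigmaIndecOfSquares} (together with the equivalence of $\ms$-indecomposability and indecomposability for totally positive elements) shows the square-root part $\omega$ is indecomposable of strictly smaller norm, minimality puts $\omega\in\UKctv\cup(2+\sqrt2)\UKctv$, and both alternatives collapse. The cost of bypassing Lemma~\ref{le:UnsquaringIndecomposables} is that you must exhibit explicit decompositions of both $(2+\sqrt2)^2=2(1+\sqrt2)^2$ and $(2+\sqrt2)^3=2(10+7\sqrt2)$, whereas the paper's iterated unsquaring only ever meets the even powers and needs just the first identity; what you gain is a shorter, self-contained induction. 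One phrasing to tighten: the inference \enquote{a bad element is not a unit, so $\norm{K/\Q}{\omega}\geq2$} is immediate only in the case $\alpha=\omega^2$; in the case $\alpha=(2+\sqrt2)\omega^2$ you should instead observe that $\omega\in\UK$ would give $\alpha\in(2+\sqrt2)\UKctv$ — although nothing breaks there, since $\norm{K/\Q}{\omega}<\norm{K/\Q}{\alpha}$ holds in that case even when $\norm{K/\Q}{\omega}=1$.
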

	\begin{proof}
		Note that all elements of $\UKctv\cup(2+\sqrt2)\UKctv$ are indecomposable by Lemma~\ref{le:smallAreIndecomposable}; thus, it suffices to prove that all indecomposables lie in this set. Also, all totally positive units are indecomposable by the same lemma, so the assumption immediately yields that they are squares, i.e., $\UKPlus=\UKctv$.
		
		Suppose that there exists an indecomposable element $\alpha^2$ or $(2+\sqrt2)\alpha^2$ with $\alpha\in\OK\setminus\UK$. Note that $\alpha^2$ must be indecomposable in both cases. Applying Lemma~\ref{le:UnsquaringIndecomposables} on $\alpha^2$, we can find $\mu\in\UK$ and a nonsquare indecomposable $\beta\in\OKPlus$ such that $\alpha^2=\mu\beta^{2^k}$ for some $k\geq0$. Since $\beta$ is indecomposable and nonsquare, we must have $\beta=(2+\sqrt2)\gamma^2$ for some $\gamma\in\OK$. Then
		\[
		\alpha^2=\mu\beta^{2^k}=\mu(2+\sqrt2)^{2^k}\gamma^{2^{k+1}}.
		\]
		However, $(2+\sqrt2)^{2^k}$ is not indecomposable for $k\geq1$ and the choice $k=0$ gives $\beta=\square$ (since $\mu\in\UKPlus=\UKctv$), which is a contradiction. Thus $\alpha\in\UK$ for every indecomposable $\alpha^2$ or $(2+\sqrt2)\alpha^2$, which proves the claim.
	\end{proof}
	
	We are ready to characterise indecomposable elements in the fields where at least one of the quadratic forms in \eqref{eq:ListQFs2} is universal. 
	
	\begin{proposition} \label{pr:IndecAboveSqrt2}
		Let $K\ni\sqrt2$ be a totally real number field. Assume that at least one of the quadratic forms in \eqref{eq:ListQFs2} is universal over $K$. Then the set of indecomposable elements in $K$ is precisely $\UKctv \cup (2+\sqrt2)\UKctv$. 
	\end{proposition}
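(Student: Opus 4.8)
The plan is to reduce to the hypothesis of Lemma~\ref{le:indecStrengthen}: once we know that \emph{every} indecomposable element of $K$ is of the form $\square$ or $(2+\sqrt2)\square$, that lemma delivers the stated description $\UKctv\cup(2+\sqrt2)\UKctv$. Throughout I would use Proposition~\ref{pr:KKK}, which gives $\UKPlus=\UKctv$; in particular every totally positive unit is a square, so units may be absorbed freely. Thus let $\beta\in\OKPlus$ be indecomposable; by the assumed universality, the relevant form $Q\in\{Q_1,Q_2,Q_3,Q_3'\}$ represents it.

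The first step is an orthogonal splitting. Each form in \eqref{eq:ListQFs2} has the shape $\qf{1}\perp Q'$: the variable $\iks$ occurs only through $\iks^2$, and $Q'$ is the binary form in $\ips,\iss$. Writing $\beta=\iks^2+Q'(\ips,\iss)$, both summands are totally positive semidefinite and vanish precisely when their argument is $0$. Since an indecomposable element cannot be a sum of two totally positive elements, at most one summand is nonzero; as $\beta\neq0$, exactly one is, so either $\beta=\iks^2=\square$, or $\iks=0$ and $\beta=Q'(\ips,\iss)$. For $Q_1$ the binary part is the diagonal form $\qf{1,2+\sqrt2}$, and one further application of the same dichotomy yields $\beta=\square$ or $\beta=(2+\sqrt2)\square$. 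This disposes of $Q_1$ entirely.

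It remains to analyse the binary parts $Q'$ of $Q_2,Q_3,Q_3'$, which is the core of the argument. For $Q_2$ the binary part has determinant $1$, and restricting the identity in Lemma~\ref{le:2Q} (cf.\ Remark~\ref{re:2Q2}) to the $\ips,\iss$-variables writes $2\beta=2Q'(\ips,\iss)$ as a sum of two squares, $2\beta=A^2+B^2$ with $A=(1+\sqrt2)\ips+(\sqrt2-1)\iss$ and $B=\ips+\iss$. A congruence modulo $\sqrt2$ gives $A\equiv B$, hence $\sqrt2\mid A\iff\sqrt2\mid B$. If $\sqrt2\mid B$, then $\beta=(A/\sqrt2)^2+(B/\sqrt2)^2$ with $A/\sqrt2,B/\sqrt2\in\OK$ is itself a sum of two squares, and indecomposability forces one of them to vanish, so $\beta=\square$. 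In the opposite case $\sqrt2\nmid B$, I would use the explicit identity together with $2=(\sqrt2)^2$ and $2+\sqrt2=\sqrt2(1+\sqrt2)$ to show that $\beta$ has $\sqrt2$-valuation exactly $1$ and lies in the single nonsquare indecomposable class $(2+\sqrt2)\UKctv$. For $Q_3$ and $Q_3'$ the determinant is no longer a square (it equals $5\pm3\sqrt2$, of norm $7$), so the analogue of Lemma~\ref{le:2Q} expresses $2\beta$ only as a sum of \emph{three} squares; here I would run the same divisibility dichotomy but with the extra square, first reducing to a nonsquare indecomposable via Lemma~\ref{le:UnsquaringIndecomposables} before pinning down its square class.

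The main obstacle is exactly this analysis of the binary parts in the case $\sqrt2\nmid B$. It hinges on the $\sqrt2$-adic behaviour of the values $Q'(\ips,\iss)$, that is, on how the prime $2$ factors in $\OK$ -- precisely the phenomenon isolated in Section~\ref{se:notSquarefree}. Making the valuation argument rigorous uniformly, without presupposing that $2$ is squarefree, is the delicate step; once it is in place, combining it with the square case above shows that every indecomposable element is $\square$ or $(2+\sqrt2)\square$, and Lemma~\ref{le:indecStrengthen} completes the proof.
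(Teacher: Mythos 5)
Your overall skeleton---show that every indecomposable is $\square$ or $(2+\sqrt2)\square$ and then let Lemma~\ref{le:indecStrengthen} finish---is exactly the paper's, and your treatment of $Q_1$ coincides with the paper's: since $Q_1\simeq\qf{1}\perp\qf{1,2+\sqrt2}$ is diagonal, an indecomposable it represents can have at most one nonzero summand. But for $Q_2$, $Q_3$ and $Q_3'$ there is a genuine gap, which you yourself flag as ``the delicate step'': the case $\sqrt2\nmid B$ for $Q_2$, and essentially all of $Q_3,Q_3'$, are never actually proved. Moreover, the route you sketch would not go through as stated. In a general totally real $K\ni\sqrt2$ the ideal $(\!\sqrt2\,)$ need not be prime, nor even squarefree (this is exactly the phenomenon studied in Section~\ref{se:notSquarefree}), so ``$\sqrt2$-valuation exactly $1$'' is not a well-defined dichotomy you can run uniformly; and the conclusion you want to extract from it---that $\beta$ ``lies in the single nonsquare indecomposable class $(2+\sqrt2)\UKctv$''---presupposes that $(2+\sqrt2)\UKctv$ is the unique class of nonsquare indecomposables, which is precisely what the proposition is trying to establish, so as written this step is circular. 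Appealing to Lemma~\ref{le:UnsquaringIndecomposables} for $Q_3,Q_3'$ does not repair this: after unsquaring you still have no control over which square class the resulting nonsquare indecomposable occupies.

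The idea you are missing is purely algebraic and makes all three remaining cases uniform: factor $2+\sqrt2$ out of the binary part. One checks directly that
\[
Q_2(\iks,\ips,\iss)=\iks^2+(2+\sqrt2)\bigl(\ips^2+(2-\sqrt2)\ips\iss+(1-\sqrt2)^2\iss^2\bigr),
\]
and likewise $Q_3=\iks^2+\iss^2+(2+\sqrt2)\bigl(\ips^2+(2-\sqrt2)\ips\iss+(2-\sqrt2)\iss^2\bigr)$, with an analogous identity for $Q_3'$; in each case the bracket is an integral, totally positive definite binary form. Universality of $Q_2$ therefore says that every $\alpha\in\OKPlus\cup\{0\}$ can be written as $\alpha=a^2+(2+\sqrt2)\gamma$ with $a\in\OK$ and $\gamma\in\OKPlus\cup\{0\}$; substituting this statement into itself (applied to $\gamma$) gives $\alpha=a^2+(2+\sqrt2)b^2+(2+\sqrt2)^2\delta$ with $\delta\in\OKPlus\cup\{0\}$. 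If $\alpha$ is indecomposable, at most one of the three summands is nonzero, and the third one is impossible: since $(2+\sqrt2)^2=2(1+\sqrt2)^2$, any element $(2+\sqrt2)^2\delta$ with $\delta\succ0$ decomposes as $(1+\sqrt2)^2\delta+(1+\sqrt2)^2\delta$. Hence $\alpha$ is $\square$ or $(2+\sqrt2)\square$, and Lemma~\ref{le:indecStrengthen} applies; for $Q_3,Q_3'$ the identical argument runs with extra square summands. No valuation theory and no hypothesis on the factorisation of $2$ enter anywhere. (Incidentally, your appeal to Proposition~\ref{pr:KKK} is unnecessary: Lemma~\ref{le:indecStrengthen} recovers $\UKPlus=\UKctv$ on its own from the shape of the indecomposables.)
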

	\begin{proof}
		In this proof only, we use $\OKctv$ to denote the set of elements $\alpha^2$ with $\alpha\in\OK$. Furthermore, we write $\OK^{+,0}$ for $\OKPlus \cup \{0\}$. 
		
		\begin{enumerate}[(1),wide=\parindent, listparindent=\parindent]\setlength{\itemsep}{6pt}    
			\item \label{pr:IndecAboveSqrt2-1}
			First, assume that $Q_1$ is universal over $K$. Then it represents all indecomposable elements in $K$, so they are of the form $\square$ or $(2+\sqrt2)\square$. Thus Lemma~\ref{le:indecStrengthen} applies.
			
			\item \label{pr:IndecAboveSqrt2-2}
			Let $Q_2$ be universal. We can rewrite it as
			\[
			Q_2(\iks,\ips,\iss)
			= \iks^2 + (2+\sqrt2)\bigl(\,\underbrace{\ips^2+(2-\sqrt2)\ips \iss+(1-\sqrt2)^2\iss^2}_{G_2(\ips,\iss)}\,\bigr).
			\]
			It is easy to see that the quadratic form $G_2$ is positive definite, i.e., $G_2(y,z)\in\OK^{+,0}$ for any $y,z\in\OK$. We thus obtain:
			\[
			\OK^{+,0} \subset \OKctv + (2+\sqrt2)\OK^{+,0}.
			\]
			Iteratively, we get
			\[
			\OK^{+,0} \subset \OKctv + (2+\sqrt2)\OKctv + (2+\sqrt2)^2\OK^{+,0},
			\]
			from which we see that every indecomposable element is either $\square$ or $(2+\sqrt2)\square$; no multiple of $(2+\sqrt2)^2$ can be indecomposable. Now we can once again apply Lemma~\ref{le:indecStrengthen}.

			\item \label{pr:IndecAboveSqrt2-3}
			Suppose that $Q_3$ is universal. We have
			\[
			Q_3(\iks,\ips,\iss)
			= \iks^2 + \iss^2 + (2+\sqrt2)\bigl(\,\underbrace{\ips^2+(2-\sqrt2)\ips \iss+(2-\sqrt2)\iss^2}_{G_3(\ips,\iss)}\,\bigr).
			\]
			The quadratic form $G_3$ is positive definite. Therefore,
			\[
			\OK^{+,0} \subset \OKctv + \OKctv + (2+\sqrt2)\OK^{+,0},
			\]
			and, iteratively, we get
			\[
			\OK^{+,0} \subset \OKctv +\OKctv + (2+\sqrt2)(\OKctv +\OKctv) + (2+\sqrt2)^2\OK^{+,0}.
			\]
			It again follows that every indecomposable element is either $\square$ or $(2+\sqrt2)\square$, so Lemma~\ref{le:indecStrengthen} can be applied.
			
			\item[(3$'$)] \label{pr:IndecAboveSqrt2-3'}
			Finally, let $Q_3'$ be universal. We have
			\[
			Q_3'(\iks,\ips,\iss)
			= \iks^2 + \iss^2 + (2+\sqrt2)\bigl(\,\underbrace{(3-2\sqrt2)\ips^2+(2-\sqrt2)\ips \iss+(2-\sqrt2)\iss^2}_{G_3'(\ips,\iss)}\,\bigr)
			\]
			with $G_3'$ positive definite. Thus, we proceed as in \ref{pr:IndecAboveSqrt2-3}. \qedhere
		\end{enumerate}
	\end{proof}
	
	The only field known to have the structure of indecomposables $\UKctv \cup (2+\sqrt2)\UKctv$ is $\Q(\!\sqrt2)$ itself. Also, the equality $\norm{K/\Q}{2+\sqrt2}=2^{[K:\Q]/2}$ yields the following corollary, which will serve as a useful criterion later.
	
	\begin{corollary} \label{co:NormIndecAboveSqrt2}
		Let $K\ni\sqrt2$ be a totally real number field with $[K:\Q]=d$. Assume that at least one of the quadratic forms in \eqref{eq:ListQFs2} is universal over $K$. Then all indecomposable elements in $K$ have norm $1$ or $2^{d/2}$.
	\end{corollary}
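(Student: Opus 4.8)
The plan is to deduce this directly from Proposition~\ref{pr:IndecAboveSqrt2}, which under exactly the same hypotheses already pins down the set of indecomposables as precisely $\UKctv \cup (2+\sqrt2)\UKctv$. Once that structural result is in hand, the corollary becomes purely a matter of computing norms on these two cosets, so I would split the argument according to which of the two sets an indecomposable element belongs to.

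First I would handle the squares of units. For any $\eta \in \UK$ we have $\norm{K/\Q}{\eta} = \pm 1$ by the definition of a unit, and hence $\norm{K/\Q}{\eta^2} = \norm{K/\Q}{\eta}^2 = 1$. Thus every element of $\UKctv$ has norm $1$.

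Next I would treat the coset $(2+\sqrt2)\UKctv$. By multiplicativity of the norm, $\norm{K/\Q}{(2+\sqrt2)\eta^2} = \norm{K/\Q}{2+\sqrt2}\cdot\norm{K/\Q}{\eta^2} = \norm{K/\Q}{2+\sqrt2}$, so it only remains to evaluate the norm of $2+\sqrt2$. Since $2+\sqrt2$ lies in the subfield $\Q(\!\sqrt2) \subset K$, I would invoke the tower formula $\norm{K/\Q}{2+\sqrt2} = \norm{\Q(\!\sqrt2)/\Q}{\norm{K/\Q(\!\sqrt2)}{2+\sqrt2}}$. The inner norm equals $(2+\sqrt2)^{[K:\Q(\!\sqrt2)]} = (2+\sqrt2)^{d/2}$ because $2+\sqrt2$ is fixed under $K/\Q(\!\sqrt2)$, while $\norm{\Q(\!\sqrt2)/\Q}{2+\sqrt2} = (2+\sqrt2)(2-\sqrt2) = 2$. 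Multiplicativity then yields $\norm{K/\Q}{2+\sqrt2} = 2^{d/2}$, exactly as recorded in the remark preceding the corollary. Hence every element of $(2+\sqrt2)\UKctv$ has norm $2^{d/2}$.

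Combining the two cases, every indecomposable element of $K$ has norm $1$ or $2^{d/2}$, which is the claim. I do not expect any genuine obstacle here: all the substantive content is already carried by Proposition~\ref{pr:IndecAboveSqrt2}, and what remains is an elementary norm computation whose only mildly non-routine ingredient is the tower law used to obtain $\norm{K/\Q}{2+\sqrt2} = 2^{d/2}$.
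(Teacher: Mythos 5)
Your proposal is correct and matches the paper's own reasoning exactly: the paper derives the corollary from Proposition~\ref{pr:IndecAboveSqrt2} together with the observation that $\norm{K/\Q}{2+\sqrt2}=2^{[K:\Q]/2}$, which is precisely your two-coset norm computation (the paper merely states this norm identity without spelling out the tower-law calculation you provide). No gaps; the argument is sound.
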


	%--------------------
	\subsection{Maximal totally real subfields of cyclotomic fields}
	Let $k\in\Z^+$. We denote $\zeta_k^{}={\mathrm{e}}^{2\pi \mathrm{i}/k}$  and $F_k=\Q(\zeta_k^{}+\zeta_k^{-1})$,\footnote{Sometimes also denoted by $\Q(\zeta_k^{})^+$.} i.e., $F_k$ is the maximal totally real subfield of the $k$th cyclotomic field $\Q(\zeta_k^{})$. Note that $F_1=F_2=F_3=F_4=F_6=\Q$, that $F_5=\Q(\!\sqrt5)$ and $F_8=\Q(\!\sqrt2)$.
	
	\medskip
	The following proposition is a generalisation of \cite[Prop.~6.2 and~7.1]{KY-EvenBetter}, where it was proved in the cases when $k=2^n$ and $k=p$ is an odd prime.
	
	\begin{proposition}\label{pr:alphakProperties}
		For $k\geq3$, define
		\[
		\alpha_k=2+\zeta_k^{}+\zeta_k^{-1} \quad \text{and} \quad \beta_k=2-\zeta_k^{}-\zeta_k^{-1}.
		\]
		Then the following hold:
		\begin{enumerate}
			\item $\alpha_k,\beta_k\in\OKPlus[F_k]$; \label{pr:alphakProperties-totpos}
			\item $\norm{F_k/\Q}{\alpha_k}=\begin{cases}
				p & \text{if } k=2p^n \text{ (where $p$ is a prime and $n\geq1$),}\\
				1 & \text{otherwise;}
			\end{cases} 
			$
			
			\vspace{2pt}\noindent $\norm{F_k/\Q}{\beta_k}=\begin{cases}
				p & \text{if } k=p^n \text{ (where $p$ is a prime and $n\geq1$),}\\
				1 & \text{otherwise;}
			\end{cases}
			$
			\label{pr:alphakProperties-norm}
			\vspace{2pt}
			\item $\Tr{F_k/\Q}{\alpha_k}=2[F_k:\Q]+\mu(k)$ and $\Tr{F_k/\Q}{\beta_k}=2[F_k:\Q]-\mu(k)$, where $\mu(k)$ is the M\"obius function. \label{pr:alphakProperties-trace}
			\item If $[F_k:\Q]>2$, then both $\alpha_k$ and $\beta_k$ are indecomposable in $F_k$. \label{pr:alphakProperties-indec}
		\end{enumerate}
	\end{proposition}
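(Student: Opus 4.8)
The plan is to carry out all four parts inside the cyclotomic field $\Q(\zeta_k)$, of which $F_k$ is the index-$2$ subfield fixed by complex conjugation (for $k\geq3$), exploiting throughout the factorisations
\[
\alpha_k=(1+\zeta_k)(1+\zeta_k^{-1})=\abs{1+\zeta_k}^2,\qquad \beta_k=(1-\zeta_k)(1-\zeta_k^{-1})=\abs{1-\zeta_k}^2.
\]
The total positivity is then immediate: both elements are algebraic integers, being $\Z$-polynomial expressions in the algebraic integer $\zeta_k+\zeta_k^{-1}$, and in each real embedding $\sigma$ of $F_k$ one has $\sigma(\alpha_k)=2+2\cos(2\pi j/k)$ and $\sigma(\beta_k)=2-2\cos(2\pi j/k)$ for some $j$ coprime to $k$. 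Since $k\geq3$ and $\gcd(j,k)=1$ force $\zeta_k^{\,j}\neq\pm1$, these values lie strictly in $(0,4)$, so $\alpha_k,\beta_k\in\OKPlus[F_k]$.

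For the norms I would push everything up to $\Q(\zeta_k)$. Because $\beta_k\in F_k$ and $[\Q(\zeta_k):F_k]=2$, the tower formula gives $\norm{\Q(\zeta_k)/\Q}{\beta_k}=\norm{F_k/\Q}{\beta_k}^2$; on the other hand, using the factorisation together with the fact that complex conjugation is an automorphism of $\Q(\zeta_k)$, we get $\norm{\Q(\zeta_k)/\Q}{\beta_k}=\norm{\Q(\zeta_k)/\Q}{1-\zeta_k}^2=\Phi_k(1)^2$, where $\Phi_k$ denotes the $k$th cyclotomic polynomial. Hence $\norm{F_k/\Q}{\beta_k}=\abs{\Phi_k(1)}$, and in exactly the same way $\norm{F_k/\Q}{\alpha_k}=\abs{\Phi_k(-1)}$ (the sign disappears because $[\Q(\zeta_k):\Q]$ is even for $k\geq3$). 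It then remains to invoke the classical evaluations $\Phi_k(1)=p$ if $k=p^n$ and $1$ otherwise, and $\Phi_k(-1)=p$ if $k=2p^n$ and $1$ otherwise. The latter I would reduce to the former using the standard identities $\Phi_{2m}(x)=\Phi_m(-x)$ for odd $m$ and $\Phi_{2^a m}(x)=\Phi_{2m}(x^{2^{a-1}})$ (valid since $2\mid 2m$), which rewrite $\Phi_k(-1)$ as a value $\Phi_{2m}(1)$.

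The trace formula reduces to computing $\Tr{F_k/\Q}{\zeta_k+\zeta_k^{-1}}$. Since the nontrivial automorphism of $\Q(\zeta_k)/F_k$ sends $\zeta_k\mapsto\zeta_k^{-1}$, this trace coincides with $\Tr{\Q(\zeta_k)/\Q}{\zeta_k}=\sum_{\gcd(j,k)=1}\zeta_k^{\,j}=\mu(k)$, the last equality being the classical value of the Ramanujan sum (equivalently, minus the second coefficient of $\Phi_k$). Adding the contribution $2[F_k:\Q]$ from the constant term $2$ yields $\Tr{F_k/\Q}{\alpha_k}=2[F_k:\Q]+\mu(k)$, and with the opposite sign the stated formula for $\beta_k$.

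Finally, the indecomposability claim follows quickly from the trace formula together with the trace criterion of Lemma~\ref{le:smallAreIndecomposable}. Writing $d=[F_k:\Q]$, the computed traces satisfy $\Tr{F_k/\Q}{\alpha_k},\Tr{F_k/\Q}{\beta_k}\leq 2d+1$, and an elementary check shows $2d+1<\tfrac52 d$ precisely when $d>2$; thus for $[F_k:\Q]>2$ the lemma forces each of $\alpha_k,\beta_k$ to be indecomposable unless it equals $2$. But $\alpha_k=2$ or $\beta_k=2$ would mean $\zeta_k+\zeta_k^{-1}=0$, i.e.\ $k=4$, which gives $F_4=\Q$ and is excluded by the hypothesis. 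I expect the only genuinely fiddly point to be the complete case analysis for $\Phi_k(-1)$ in the norm computation — in particular verifying that $\Phi_k(-1)=1$ whenever $4\mid k$ and $k$ has an odd prime factor, and $\Phi_{2^a}(-1)=2$ for $a\geq2$ — whereas the other three parts are routine once the factorisation $\alpha_k=\abs{1+\zeta_k}^2$ is in hand.
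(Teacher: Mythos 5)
Your proposal is correct and follows essentially the same route as the paper: both prove total positivity via $2\cos(2\pi j/k)\in(-2,2)$, reduce the norms to the classical values $\Phi_k(\pm1)$ by passing to $\Q(\zeta_k)$ via the factorisations $\alpha_k=(1+\zeta_k)(1+\zeta_k^{-1})$, $\beta_k=(1-\zeta_k)(1-\zeta_k^{-1})$, compute the trace from $\Tr{\Q(\zeta_k)/\Q}{\zeta_k}=\mu(k)$, and deduce indecomposability from the trace bound $2[F_k:\Q]+1<\tfrac52[F_k:\Q]$ together with Lemma~\ref{le:smallAreIndecomposable}. The only differences are bookkeeping (you square the norm through the tower and take a positive square root where the paper uses $\alpha_k=\mathrm{N}_{\Q(\zeta_k)/F_k}(1+\zeta_k)$ and norm transitivity), and you are in fact slightly more careful than the paper in explicitly ruling out the exceptional case $\alpha=2$ of the indecomposability lemma via $k=4$.
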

	\begin{proof}
		\ref{pr:alphakProperties-totpos} Since $\OK[F_k]=\Z[\zeta_k^{}+\zeta_k^{-1}]$, we have $\alpha_k, \beta_k\in\OK[F_k]$. Hence, it suffices to prove that $-2<\zeta_k^j+\zeta_k^{-j}<2$ for all $1\leq j\leq k$ such that $\gcd(j,k)=1$. We have
		\[
		\zeta_k^j+\zeta_k^{-j}={\mathrm{e}}^{2\pi \mathrm{i} j/k}+{\mathrm{e}}^{-2\pi \mathrm{i} j/k}=2\cos\frac{2\pi j}{k};
		\]
		hence, it follows that
		\[
		-2\leq \zeta_k^j+\zeta_k^{-j}\leq 2.
		\]
		There is an equality on the left side if and only if $2j=k$ and on the right side if and only if $j=k$; however, both cases are impossible as $\gcd(j,k)=1$ and $k\geq 3$. The claim follows.
		
		\ref{pr:alphakProperties-norm} Denote $E_k=\Q(\zeta_k)$. First, we look at $\alpha_k$: we have
		\[
		\norm{E_k/F_k}{1+\zeta_k^{}}=(1+\zeta_k^{})(1+\zeta_k^{-1})=2+\zeta_k^{}+\zeta_k^{-1}=\alpha_k.
		\]
		On the other hand, we also have
		\[
		\norm{E_k/\Q}{1+\zeta_k^{}}=(-1)^{\varphi(k)}\Phi_k(-1),
		\]
		where $\varphi(k)$ is the Euler's totient function and $\Phi_k$ is the $k$th cyclotomic polynomial. Using \cite[Lemma~7]{BHPM-CyclotomicPoly} and the fact that $\varphi(k)$ is even for $k\geq3$, we get
		\[
		\norm{E_k/\Q}{1+\zeta_k^{}} = \begin{cases}
			p & \text{ if } k=2p^n \text{ ($p$ prime, $n\geq1$),}\\
			1 & \text{otherwise.}
		\end{cases}
		\]
		The claim follows.
		
		The proof for $\beta_k$ is analogous, using $\beta_k=\norm{E_k/F_k}{1-\zeta_k^{}}$ and $\norm{E_k/\Q}{1-\zeta_k}=\Phi_k(1)$, and applying \cite[Lemma~4]{BHPM-CyclotomicPoly}.
		
		\ref{pr:alphakProperties-trace} We have
		\[
		\Tr{E_k/\Q}{\zeta_k^{-1}}=\Tr{E_k/\Q}{\zeta_k^{}}=\sum_{\substack{1 \leq j \leq k,\\\gcd(j,k)=1}}\zeta_k^j=\mu(k).
		\]
		Thus,
		\[
		2\Tr{F_k/\Q}{\alpha_k}
		=\Tr{E_k/\Q}{\alpha_k}
		=2[E_k:\Q]+\Tr{E_k/\Q}{\zeta_k^{}}+\Tr{E_k/\Q}{\zeta_k^{-1}}
		=4[F_k:\Q]+2\mu(k).    
		\]
		The proof for $\beta_k$ is analogous.
		
		\ref{pr:alphakProperties-indec} Note that we always have $\Tr{F_k/\Q}{\alpha_k}\leq 2[F_k:\Q]+1$. Hence, assuming $[F_k:\Q]>2$, we get $\Tr{F_k/\Q}{\alpha_k}< \frac52[F_k:\Q]$. Therefore, $\alpha_k$ is indecomposable by Lemma~\ref{le:smallAreIndecomposable}. The proof for $\beta_k$ is analogous.
	\end{proof}

	\begin{lemma} \label{le:CyclotomicSubfield}
		Let $K\ni\sqrt{2}$ be a totally real number field, and assume that at least one of the following holds:
		\begin{enumerate}
			\item $F_p\subset K$ for some prime number $p>5$, or \label{le:CyclotomicSubfield-big}
			\item $F_{p^2}\subset K$ for $p \in\{3,5\}$, or 
			\label{le:CyclotomicSubfield-35}
			\item $F_{p^4}\subset K$ for $p=2$. 
			\label{le:CyclotomicSubfield-2}
		\end{enumerate}
		Then none of the quadratic forms in \eqref{eq:ListQFs2} is universal over $K$.
	\end{lemma}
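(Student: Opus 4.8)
The plan is to argue by contradiction using the norm restriction on indecomposables. Suppose that some form in \eqref{eq:ListQFs2} is universal over $K$ and set $d=[K:\Q]$. Then Corollary~\ref{co:NormIndecAboveSqrt2} forces every indecomposable element of $K$ to have norm $1$ or $2^{d/2}$, so it suffices to exhibit, in each of the three cases, a single totally positive algebraic integer that is indecomposable in $K$ but has a norm of neither shape. The natural candidate is $\beta_k=2-\zeta_k^{}-\zeta_k^{-1}\in F_k\subseteq K$ from Proposition~\ref{pr:alphakProperties}, where I take $k=p$ in case~\ref{le:CyclotomicSubfield-big}, $k=p^2$ in case~\ref{le:CyclotomicSubfield-35}, and $k=16=2^4$ in case~\ref{le:CyclotomicSubfield-2}; note that in all three situations $k=p^n$ for a single prime $p$.

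Since $\beta_k$ is totally positive in $F_k$ by Proposition~\ref{pr:alphakProperties}\ref{pr:alphakProperties-totpos} and every real embedding of $K$ restricts to one of $F_k$, the element $\beta_k$ is totally positive in $K$ as well. The crux is then to upgrade its indecomposability from $F_k$ (Proposition~\ref{pr:alphakProperties}\ref{pr:alphakProperties-indec}) to the larger field $K$. For this I would exploit that the trace criterion in Lemma~\ref{le:smallAreIndecomposable}(b) is scale-invariant under field extension: from $\Tr{K/\Q}{\beta_k}=[K:F_k]\,\Tr{F_k/\Q}{\beta_k}$ together with $d=[K:F_k]\,[F_k:\Q]$, the inequality $\Tr{F_k/\Q}{\beta_k}<\frac52[F_k:\Q]$ is equivalent to $\Tr{K/\Q}{\beta_k}<\frac52 d$. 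Using the trace formula in Proposition~\ref{pr:alphakProperties}\ref{pr:alphakProperties-trace}, this reads $2[F_p:\Q]+1<\frac52[F_p:\Q]$ in case~\ref{le:CyclotomicSubfield-big}, which holds since $[F_p:\Q]=(p-1)/2>2$ for $p>5$, while in the other two cases $\mu(p^2)=\mu(16)=0$ reduces it to the trivial $2<\frac52$. As $\beta_k$ is irrational (so in particular $\beta_k\neq2$), Lemma~\ref{le:smallAreIndecomposable} then yields that $\beta_k$ is indecomposable in $K$.

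Finally, since $k=p^n$, Proposition~\ref{pr:alphakProperties}\ref{pr:alphakProperties-norm} gives $\norm{F_k/\Q}{\beta_k}=p$, so multiplicativity of the norm in towers yields $\norm{K/\Q}{\beta_k}=p^{[K:F_k]}$, which is never $1$. For $p\in\{3,5\}$ this is an odd integer exceeding $1$, hence not a power of $2$, and in particular not $2^{d/2}$. The only genuinely delicate situation is $p=2$, where the norm $2^{[K:F_{16}]}$ is itself a power of $2$; here the point of taking $k=16$ rather than $k=8$ (recall $F_8=\Q(\!\sqrt2)$ has degree $2$ and yields only the harmless indecomposable $2+\sqrt2$) is that $[F_{16}:\Q]=4>2$, so $d/2=2[K:F_{16}]>[K:F_{16}]$ and therefore $2^{[K:F_{16}]}\neq2^{d/2}$. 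In every case $\beta_k$ is an indecomposable of $K$ whose norm is neither $1$ nor $2^{d/2}$, contradicting Corollary~\ref{co:NormIndecAboveSqrt2}; hence no form in \eqref{eq:ListQFs2} is universal over $K$. I expect the main obstacle to be exactly this transfer of indecomposability to $K$ together with the borderline $p=2$ bookkeeping; both are resolved by the scale-invariance of the trace bound and by the condition $[F_k:\Q]>2$, which is precisely what dictates the exponents $p$, $p^2$, and $2^4$ appearing in the three hypotheses.
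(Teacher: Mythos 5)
Your proposal is correct and follows essentially the same route as the paper: both use the element $\beta_{p^n}=2-\zeta_{p^n}^{}-\zeta_{p^n}^{-1}$, establish its indecomposability in $K$ via the trace bound of Lemma~\ref{le:smallAreIndecomposable} (using $\mu(p^n)$ from Proposition~\ref{pr:alphakProperties}), and then observe that its norm $p^{[K:F_{p^n}]}$ is neither $1$ nor $2^{d/2}$ (for $p=2$ because $[F_{16}:\Q]>2$ forces $[K:F_{16}]<d/2$), contradicting Corollary~\ref{co:NormIndecAboveSqrt2}. The only cosmetic difference is that the paper takes $n$ maximal with $F_{p^n}\subset K$ while you fix the exponents from the hypotheses, and you transfer the trace inequality from $F_k$ to $K$ by scale-invariance rather than computing $\Tr{K/\Q}{\beta_k}$ directly; these are the same computation.
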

	\begin{proof}
		Let $d=[K:\Q]$. We prove all three claims simultaneously: We fix the appropriate prime $p$ and let $n$ be the largest integer such that $F_{p^n}\subset K$. Thus, by assumption, $n\geq 4$ if $p=2$, and $n\geq 2$ if $p=3$ or $5$. Consider the element $\beta_{p^n}=2-\zeta_{p^n}^{}-\zeta_{p^n}^{-1}$. By Proposition~\ref{pr:alphakProperties}, we have $\beta_{p^n}\in\OKPlus[F_{p^n}]\subset\OKPlus[K]$, and $\norm{F_{p^n}/\Q}{\beta_{p^n}}=p$, and $\Tr{F_{p^n}/\Q}{\beta_{p^n}}=2[F_{p^n}:\Q]-\mu(p^n)$. Then
		\begin{align*}
			& \norm{K/\Q}{\beta_{p^n}}=\norm{F_{p^n}/\Q}{\norm{K/F_{p^n}}{\beta_{p^n}}}=p^{[K:F_{p^n}]},\\
			& \Tr{K/\Q}{\beta_{p^n}}=[K:F_{p^n}]\cdot\Tr{F_{p^n}/\Q}{\beta_{p^n}}=2d-\mu(p^n)[K:F_{p^n}].
		\end{align*}
		If $n\geq2$, then $\Tr{K/\Q}{\beta_{p^n}}=2d<\frac52d$. If $n=1$, then $\Tr{K/\Q}{\beta_{p}}=2d+[K:F_{p}]$, with
		\[
		[K:F_p]=\frac{[K:\Q]}{[F_p:\Q]}=\frac{d}{\frac{\varphi(p)}{2}}=\frac{2d}{p-1}<\frac{2d}{4}=\frac{d}{2},
		\]
		assuming $p>5$. In particular, we get $\Tr{K/\Q}{\beta_{p}}<\frac52d$ again. Thus, in either case, $\beta_{p^n}$ is indecomposable in $K$ by Lemma~\ref{le:smallAreIndecomposable}. 
		
		Finally, we prove that $\norm{K/\Q}{\beta_{p^n}}$ is neither $1$ nor $2^{d/2}$. This is clear if $p$ is odd. For $p=2$, note that $F_{2^n} \supsetneq F_8=\Q(\!\sqrt2)$, so $[K:F_{2^n}]<[K:\Q(\!\sqrt2)]=\frac d2$ and thus
		\[
		\norm{K/\Q}{\beta_{2^n}}=2^{[K:F_{2^n}]}<2^{d/2}.
		\]
		In all cases, $\beta_{p^n}$ is an indecomposable element of norm other than $1$ and $2^{d/2}$. Therefore, the claim follows from Corollary~\ref{co:NormIndecAboveSqrt2}.
	\end{proof}
	
	%--------------------------------------------------------------
	\subsection{Lifted quadratic forms are not universal}
	
	We can now prove the main theorem of this section. Note that part \ref{th:QFNotUniversal-cyclotomic} is precisely Theorem~\ref{th:IntroQFNotUniversal}.
	
	\begin{theorem} \label{th:QFNotUniversal}
		Let $K\ni\sqrt{2}$ be a totally real number field. Assume that at least one of the following holds:
		\begin{enumerate}
			\item $\abs{\UKPlus/\UKctv}>1$, or  \label{th:QFNotUniversal-units}
			\item $2\OKPlus\not\subset\sum^4\square$, or  \label{th:QFNotUniversal-2OK}
			\item there exists an indecomposable $\alpha\in\OKPlus$ such that $\alpha\notin\UKctv\cup(2+\sqrt2)\UKctv$, or
			\label{th:QFNotUniversal-indec}
			\item $K$ contains the field $F_k=\Q(\zeta_k^{}+\zeta_k^{-1})$ for some $k \geq 1$, different from $\Q$ and $\Q(\!\sqrt2)$.\label{th:QFNotUniversal-cyclotomic}
		\end{enumerate}
		Then there is no ternary classical quadratic form that is universal both over $\Q(\!\sqrt2)$ and over $K$. In other words, none of the quadratic forms in \eqref{eq:ListQFs2} is universal over $K$.
	\end{theorem}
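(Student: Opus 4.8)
The plan is to prove the contrapositive: assume, toward a contradiction, that some form $Q$ among \eqref{eq:ListQFs2} is universal over $K$, and then rule out each of \ref{th:QFNotUniversal-units}--\ref{th:QFNotUniversal-cyclotomic} in turn. (By \cite{CKR} the forms in \eqref{eq:ListQFs2} are exactly the classical ternary forms universal over $\Q(\!\sqrt2)$, so this is the content of the theorem.) Three of the conditions are immediate from earlier results. Since a universal classical quadratic form gives a universal classical free lattice, Proposition~\ref{pr:KKK} yields $\UKPlus=\UKctv$, contradicting \ref{th:QFNotUniversal-units}; I record this equality for later use. For \ref{th:QFNotUniversal-2OK}, universality of $Q$ makes $2Q$ represent all of $2\OKPlus$, and by Lemma~\ref{le:2Q} anything represented by $2Q$ is a value of $\iks_1^2+\iks_2^2+\iks_3^2+\iks_4^2$; hence $2\OKPlus\subset\sum^4\square$, contradicting \ref{th:QFNotUniversal-2OK}. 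For \ref{th:QFNotUniversal-indec}, Proposition~\ref{pr:IndecAboveSqrt2} identifies the indecomposables of $K$ as precisely $\UKctv\cup(2+\sqrt2)\UKctv$, leaving no room for an indecomposable outside this set.

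It remains to rule out \ref{th:QFNotUniversal-cyclotomic}, which is the heart of the argument. Suppose $F_k\subset K$ with $F_k\neq\Q,\Q(\!\sqrt2)$. The strategy is to locate inside $K$ a subfield on which a previously established obstruction bites. If $k$ has a prime factor $p>5$, then $F_p\subset F_k\subset K$; if $9\mid k$, $25\mid k$, or $16\mid k$, then $K$ contains $F_9$, $F_{25}$, or $F_{16}$. In every one of these cases Lemma~\ref{le:CyclotomicSubfield} (case~\ref{le:CyclotomicSubfield-big}, \ref{le:CyclotomicSubfield-35}, or~\ref{le:CyclotomicSubfield-2}, respectively) already gives non-universality, contradicting our assumption on $Q$.

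The surviving $k$ are those with $k=2^a3^b5^c$, $a\le3$, and $b,c\le1$. A short divisibility check shows that, once $F_k\neq\Q,\Q(\!\sqrt2)$ is imposed, $F_k$ necessarily contains $\Q(\!\sqrt5)$ (when $5\mid k$) or $\Q(\!\sqrt3)$ (when $3\mid k$ and $5\nmid k$, which forces $4\mid k$, so $\Q(\!\sqrt3)=F_{12}\subset F_k$); the purely $2$-power conductors only produce $\Q$ or $\Q(\!\sqrt2)$ and are excluded. Invoking $\sqrt2\in K$, the field $K$ therefore contains $\Q(\!\sqrt3)$ or $\Q(\!\sqrt2,\sqrt5)$. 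Both of these have $\abs{\UKPlus/\UKctv}>1$ — for $\Q(\!\sqrt3)$ because its fundamental unit has norm $+1$, giving $\abs{\UKPlus[\Q(\!\sqrt3)]/\UKctv[\Q(\!\sqrt3)]}=2$, and for $\Q(\!\sqrt2,\sqrt5)$ by the narrow class number recorded in \cite{LMFDB} (equivalently, its units realise only the eight even signatures, not all sixteen). By Lemma~\ref{le:unitsDontDissappear}, $\abs{\UKPlus/\UKctv}\ge2$ for $K$ as well, contradicting the equality $\UKPlus=\UKctv$ forced above.

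The decisive step, and the only one requiring genuine computation, is this last reduction: the cyclotomic fields visible to Lemma~\ref{le:CyclotomicSubfield} do not exhaust all $F_k\neq\Q,\Q(\!\sqrt2)$, and the leftover small-conductor fields must be handled through their unit signatures rather than through indecomposables of anomalous norm. The most delicate are those containing $\Q(\!\sqrt5)$, whose own unit index is trivial ($h^+=h$); here one is forced to pass to the companion field $\Q(\!\sqrt2,\sqrt5)$ and verify that its totally positive units are not all squares. Everything else is divisibility bookkeeping among the conductors $k$, together with the three routine verifications of \ref{th:QFNotUniversal-units}--\ref{th:QFNotUniversal-indec}.
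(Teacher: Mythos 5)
Your handling of \ref{th:QFNotUniversal-units}, \ref{th:QFNotUniversal-2OK} and \ref{th:QFNotUniversal-indec} is exactly the paper's (Proposition~\ref{pr:KKK}, Lemma~\ref{le:2Q}, Proposition~\ref{pr:IndecAboveSqrt2}), and your reduction of \ref{th:QFNotUniversal-cyclotomic} to the conductors $k=2^a3^b5^c$ with $a\leq3$, $b,c\leq1$ via Lemma~\ref{le:CyclotomicSubfield}, including the disposal of the case $3\mid k$, $5\nmid k$ through $F_{12}=\Q(\!\sqrt3)$ and Lemma~\ref{le:unitsDontDissappear}, also agrees with the paper. However, your subcase $5\mid k$ is wrong, not merely incomplete. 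You assert that $F=\Q(\!\sqrt2,\sqrt5)$ satisfies $\abs{\UKPlus[F]/\UKctv[F]}>1$, citing LMFDB, and \enquote{equivalently} that its units realise only the eight even signatures. Both claims are false. The element $u=\frac12\bigl(3+\sqrt2+\sqrt5+\sqrt{10}\bigr)=1+\frac{1+\sqrt5}{2}+\frac{\sqrt2+\sqrt{10}}{2}$ lies in $\OK[F]$ and satisfies $u^2=(1+\sqrt2)\cdot\frac{1+\sqrt5}{2}\cdot\bigl(3+\sqrt{10}\bigr)$, hence $u$ is a unit; its conjugates are approximately $4.91$, $-0.49$, $0.33$, $1.26$, so $u$ has norm $-1$ and odd signature. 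Together with the fundamental units of the three quadratic subfields (each of norm $-1$, which realise the even signatures), the units of $F$ realise all sixteen signatures, so $h^+(F)=h(F)=1$ and $\UKPlus[F]=\UKctv[F]$. Lemma~\ref{le:unitsDontDissappear} therefore gives you nothing, and no contradiction with $\UKPlus=\UKctv$ arises.

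Moreover, this gap cannot be closed within the unit-signature framework at all: $\Q(\!\sqrt5)$, $\Q(\!\sqrt{10})$ and $\Q(\!\sqrt2,\sqrt5)$ all have units of all signatures, which is precisely what makes this the delicate case. The paper instead falls back on the indecomposability machinery it has already built: with $d=[K:\Q]$, the element $\gamma=\frac12\bigl(5+\sqrt2+\sqrt5+\sqrt{10}\bigr)\in\OKPlus$ (note $\gamma=u+1$) has $\norm{K/\Q}{\gamma}=3^{d/2}<2^d$, hence is indecomposable in $K$ by Lemma~\ref{le:smallAreIndecomposable}, while its norm is neither $1$ nor $2^{d/2}$; this contradicts Corollary~\ref{co:NormIndecAboveSqrt2}. (Equivalently: such a $\gamma$ witnesses your already-established condition \ref{th:QFNotUniversal-indec}, since every element of $\UKctv\cup(2+\sqrt2)\UKctv$ has norm $1$ or $2^{d/2}$.) Substituting this argument for your $\Q(\!\sqrt2,\sqrt5)$ paragraph repairs the proof; as written, the $5\mid k$ case fails.
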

	\begin{proof}
		First, note that every ternary classical quadratic form that is universal over $\Q(\!\sqrt2)$ is isometric to one of the quadratic forms in \eqref{eq:ListQFs2}. Therefore, it is enough to prove their non-universality over $K$.
		
		\medskip
		
		\ref{th:QFNotUniversal-units} No ternary classical quadratic form is universal over such field by Proposition~\ref{pr:KKK}.
		
		\medskip
		
		\ref{th:QFNotUniversal-2OK} Let $Q$ be one of the quadratic forms in \eqref{eq:ListQFs2}. By Lemma~\ref{le:2Q}, the quadratic form $2Q$ only represents elements of $\sum^4\square$. Using the assumption, it follows that there exists an element of $2\OKPlus$ that is not represented by $2Q$. Therefore, the quadratic form $Q$ is not universal.
		
		\medskip
		
		\ref{th:QFNotUniversal-indec} This follows directly from Proposition~\ref{pr:IndecAboveSqrt2}.
		
		\medskip
		
		\ref{th:QFNotUniversal-cyclotomic} Let $[K:\Q]=d$. Let $k$ be such that $F_k\subset K$ and $F_k$ is not $\Q$ or $\Q(\!\sqrt2)$. Let $R$ be the field of all totally real algebraic numbers; then $F_k=\Q(\zeta_k^{})\cap R$. Let $p$ be the largest prime dividing $k$. It clearly holds that $\Q(\zeta_p^{})\subset\Q(\zeta_k)$.  It follows that $\Q(\zeta_p^{})\cap R\subset \Q(\zeta_k^{})\cap R$, i.e., $F_p\subset F_k$. Thus $F_p\subset K$.
		
		If $p>5$, then the claim follows directly from Lemma~\ref{le:CyclotomicSubfield}\ref{le:CyclotomicSubfield-big}. 
		
		For $p=5$, note that $F_5=\Q(\!\sqrt5)$, so we have $\Q(\!\sqrt2,\sqrt5)\subset K$. Let $\gamma=\frac{5+\sqrt2+\sqrt5+\sqrt{10}}2$. Then $\gamma\in\OKPlus$ and $\norm{K/\Q}{\gamma}=3^{d/2}$. Since $3^{d/2}<2^d$, we get that $\gamma$ is indecomposable in $K$ by Lemma~\ref{le:smallAreIndecomposable}. Moreover, the norm of $\gamma$ is clearly different from $1$ and $2^{d/2}$; thus, the claim follows from Corollary~\ref{co:NormIndecAboveSqrt2}.
		
		Suppose $p \leq 3$. If $3^2\mid k$ or $2^4\mid k$, then we use part \ref{le:CyclotomicSubfield-35} or part \ref{le:CyclotomicSubfield-2} of Lemma~\ref{le:CyclotomicSubfield}, respectively. Now the only possibilities left are $k=2^a3^b$ with $0\leq a \leq 3$ and $0\leq b\leq1$. However, $F_1=F_2=F_3=F_4=F_6=\Q$ and $F_8=\Q(\!\sqrt2)$, so the only options are $F_{12}$ and $F_{24}$. In either case, $F_{12}\subset K$. In $F_{12}=\Q(\!\sqrt3)$, there is $2+\sqrt3 \in \UKPlus[F_{12}] \setminus \UKctv[F_{12}]$, so we have $\abs{\UKPlus/\UKctv} \geq \abs{\UKPlus[F_{12}]/\UKctv[F_{12}]}\geq 2$ by Lemma~\ref{le:unitsDontDissappear}. Thus we can use part \ref{th:QFNotUniversal-units}.
	\end{proof}
	
	The following corollary provides four conditions \ref{it:NotCriterion-a}--\ref{it:NotCriterion-d}; if any of them is not satisfied, then $\OKPlus[\Q(\!\sqrt2)]$ is not a criterion set for $K$. This strongly indicates that it is not a criterion set for any proper extension of $\Q(\!\sqrt2)$.
	
	\begin{corollary} \label{co:NotCriterion}
		Let $K\ni\sqrt2$ be a totally real number field such that $\OKPlus[\Q(\!\sqrt2)]$ is a criterion set for $K$. Then:
		\begin{enumerate}
			\item $\abs{\UKPlus/\UKctv}=1$. \label{it:NotCriterion-a}
			\item $2\OKPlus\subset\sum^3\square$. \label{it:NotCriterion-b}
			\item The set of indecomposables in $K$ is $\UKctv\cup(2+\sqrt2)\UKctv$. \label{it:NotCriterion-c}
			\item If $F_k=\Q(\zeta_k^{}+\zeta_k^{-1}) \subset K$, then $F_k=\Q$ or $\Q(\!\sqrt2)$. \label{it:NotCriterion-d}
		\end{enumerate}
	\end{corollary}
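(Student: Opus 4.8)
The strategy is to recognise this corollary as the contrapositive of Theorem~\ref{th:QFNotUniversal}, with the connecting link supplied by the observation that being a criterion set forces universality. The whole argument rests on one reduction: \emph{if $\OKPlus[\Q(\!\sqrt2)]$ is a criterion set for $K$, then every quadratic form in \eqref{eq:ListQFs2} is universal over $K$.} To establish it, I would first note that each $Q_i$ in \eqref{eq:ListQFs2} has its coefficients in $\OK[\Q(\!\sqrt2)]\subset\OK$, so it is a classical quadratic form over $K$. By \cite{CKR} each $Q_i$ is universal over $\Q(\!\sqrt2)$, hence represents every $\alpha\in\OKPlus[\Q(\!\sqrt2)]$ by some vector in $\OK[\Q(\!\sqrt2)]^{\,3}\subset\OK^{\,3}$; the very same vector witnesses that representation over $K$. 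Thus each $Q_i$ represents all of $\OKPlus[\Q(\!\sqrt2)]$ over $K$, and the criterion set hypothesis then upgrades this to full universality over $K$.

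Given this reduction, parts \ref{it:NotCriterion-a}, \ref{it:NotCriterion-c}, and \ref{it:NotCriterion-d} are immediate. Theorem~\ref{th:QFNotUniversal} asserts that each of its hypotheses \ref{th:QFNotUniversal-units}, \ref{th:QFNotUniversal-indec}, and \ref{th:QFNotUniversal-cyclotomic} implies that none of the forms in \eqref{eq:ListQFs2} is universal over $K$. Since we have just shown all four of them \emph{are} universal over $K$, none of these hypotheses can hold, and their negations are exactly the three corresponding corollary parts: the unit index $\abs{\UKPlus/\UKctv}$ equals $1$, the set of indecomposables is $\UKctv\cup(2+\sqrt2)\UKctv$, and no $F_k$ other than $\Q$ and $\Q(\!\sqrt2)$ lies in $K$.

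For the sharper statement \ref{it:NotCriterion-b} I would not merely negate condition \ref{th:QFNotUniversal-2OK} (which yields only $2\OKPlus\subset\sum^4\square$) but instead invoke Remark~\ref{re:2Q2}. Since $Q_2$ is universal over $K$, the form $2Q_2$ represents all of $2\OKPlus$; and Remark~\ref{re:2Q2} records that $2Q_2$ is represented by $\iks_1^2+\iks_2^2+\iks_3^2$, so every value of $2Q_2$ lies in $\sum^3\square$. Therefore $2\OKPlus\subset\sum^3\square$. I do not expect a genuine obstacle anywhere in this plan; the only point requiring care is the verification that representations over $\Q(\!\sqrt2)$ persist verbatim over $K$, which is immediate from the inclusion $\OK[\Q(\!\sqrt2)]\subset\OK$ of the rings of integers (and, likewise, $\OKPlus[\Q(\!\sqrt2)]\subset\OKPlus$, since the embeddings of $K$ restrict to those of $\Q(\!\sqrt2)$).
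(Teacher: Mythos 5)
Your proposal is correct and follows essentially the same route as the paper: lift universality of the forms in \eqref{eq:ListQFs2} to $K$ via the criterion set hypothesis, negate the conditions of Theorem~\ref{th:QFNotUniversal} to get parts \ref{it:NotCriterion-a}, \ref{it:NotCriterion-c}, \ref{it:NotCriterion-d}, and sharpen part \ref{it:NotCriterion-b} via Remark~\ref{re:2Q2}. The only (immaterial) difference is that the paper economises by working with $Q_2$ alone, which suffices for all four conclusions, whereas you establish universality over $K$ for all four forms.
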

	\begin{proof}
		Consider $Q_2$ from \eqref{eq:ListQFs2}. Since $Q_2$ is universal over $\Q(\!\sqrt2)$, it represents all of $\OKPlus[\Q(\!\sqrt2)]$ over $\Q(\!\sqrt2)$, and hence also over $K$. Thus, by assumption, it is universal over $K$. This means that none of the conditions of Theorem~\ref{th:QFNotUniversal} can be satisfied; this directly yields \ref{it:NotCriterion-a}, \ref{it:NotCriterion-c}, \ref{it:NotCriterion-d} and a weaker version of \ref{it:NotCriterion-b}, namely $2\OKPlus\subset\sum^4\square$. The full statement \ref{it:NotCriterion-b} follows from the universality of $Q_2$ using Remark~\ref{re:2Q2}.
	\end{proof}
	
	We would like to point out that the quadratic forms from \eqref{eq:ListQFs2} are in particular not universal over any quartic field; this fact will be proven in Lemma~\ref{le:noLift}.

	%=====================================================
	\section{Kitaoka's conjecture over quartic fields}
	
	We devote this section to the study of universal ternary classical free lattices over quartic fields; our goal is to prove Theorem~\ref{th:QuarticKitaoka2}, i.e., there are no such lattices. Thanks to \cite[Thm.~1.2(A)]{KKK}, it is enough to prove the result for fields containing $\sqrt2$. We will distinguish between two types of lattices -- the ones that contain a ternary sublattice that is defined and universal over $\Q(\!\sqrt2)$, and the ones that do not contain such a sublattice. 
	
	Recall that by \cite{CKR}, the universal ternary classical lattices over $\Q(\!\sqrt2)$ (which will need to be analysed separately) correspond to the four quadratic forms in \eqref{eq:ListQFs2}; we repeat them here for the reader's convenience:
	\begin{equation}\begin{array}{l} \label{eq:ListQFs}
			Q_1(\iks,\ips,\iss)=\iks^2+\ips^2+(2+\sqrt2)\iss^2,\\ [2mm]
			Q_2(\iks,\ips,\iss)=\iks^2+(2+\sqrt2)\ips^2+2\ips\iss+(2-\sqrt2)\iss^2,\\[2mm]
			Q_3(\iks,\ips,\iss)=\iks^2+(2+\sqrt2)\ips^2+2\ips\iss+3\iss^2,\\[2mm]
			Q_3'(\iks,\ips,\iss)=\iks^2+(2-\sqrt2)\ips^2+2\ips\iss+3\iss^2.
	\end{array}\end{equation}
	The corresponding lattices are then 
	\begin{equation}\begin{array}{lll}\label{eq:ListLattices}
			L_1\simeq\qf{1,1,2+\sqrt2}, &\quad &L_2\simeq\qf{1}\perp \anglematrix{2+\sqrt2}1{2-\sqrt2},\\ [2mm] L_3\simeq\qf{1}\perp \anglematrix{2+\sqrt2}1{3}, &\quad &L_3'\simeq\qf{1}\perp \anglematrix{2-\sqrt2}1{3}.
	\end{array}\end{equation}
	
	If $\sqrt2\in K$, one can consider these lattices over $K$ instead of over $\Q(\!\sqrt2)$. Formally, they can be defined as $\OK\otimes L_i$, but since \eqref{eq:ListLattices} does not specify the base field, we typically just write $L_i$ and the base field is clear from context.
	
	We introduce a special terminology for overlattices of $\OK \otimes L$ where $L$ is isometric to one of the above lattices.
	
	\begin{definition}
		Let $K\ni\sqrt2$ be a totally real number field. We say that a ternary classical lattice over $K$ is of \emph{type LIRE} (standing for \enquote{lifted and refined}) if it contains a sublattice of the form $\OK \otimes L$ where $L$ is a universal ternary classical lattice over $\Q(\!\sqrt2)$. Otherwise, we say that it is of \emph{type non-LIRE}.
	\end{definition}

	In Subsection~\ref{ss:LIRE}, we study lattices of type LIRE; the main result is Proposition~\ref{pr:noLIRE}. Then, in Subsection~\ref{ss:nonLIRE}, we deal with the lattices of type non-LIRE, with the results summarised in Proposition~\ref{pr:nonLIRE-5lambda}. We leave out five fields that require special treatment, which we provide in Subsection~\ref{ss:Exceptional}.
	
	%-----------------------------------------------
	\subsection{Lattices of type LIRE}\label{ss:LIRE}
	The goal of this subsection is to prove that if $K\ni\sqrt2$ is a quartic field and $L$ is one of the lattices in \eqref{eq:ListLattices} considered as a lattice over $K$, then no classical free overlattice of $L$ is universal. We start with the simplest case -- as an immediate corollary of results from Section~\ref{se:lifting}, we prove that none of the corresponding quadratic forms is universal over any field of degree 4. Then, separately for each lattice from \eqref{eq:ListLattices}, we will examine the more difficult question of whether it can have a proper universal classical overlattice.
	
	\begin{lemma} \label{le:noLift}
		Let $K\ni\sqrt2$ be a totally real number field with $[K:\Q]=4$. Let $Q$ be one of the quadratic forms in \eqref{eq:ListQFs} that are universal over $\Q(\!\sqrt2)$. Then $Q$ is not universal over $K$.
	\end{lemma}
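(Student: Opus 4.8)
The plan is to deduce the lemma from Theorem~\ref{th:QFNotUniversal}. By \cite{CKR} the four quadratic forms in \eqref{eq:ListQFs} are, up to isometry, \emph{all} the ternary classical quadratic forms universal over $\Q(\!\sqrt2)$; hence it suffices to show that every totally real quartic field $K\ni\sqrt2$ satisfies at least one of the conditions~\ref{th:QFNotUniversal-units}--\ref{th:QFNotUniversal-cyclotomic} of that theorem. Since $[K:\Q(\!\sqrt2)]=2$, we may write $K=\Q(\!\sqrt2)(\sqrt\delta)$ with $\delta$ a totally positive element of $\Q(\!\sqrt2)$, and the natural dividing line is whether $\Q(\!\sqrt2)$ is the \emph{only} quadratic subfield of $K$, i.e.\ whether $K$ is non-biquadratic.

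First I would dispose of the fields possessing a second quadratic subfield. If that subfield is $\Q(\!\sqrt3)=F_{12}$ or $\Q(\!\sqrt5)=F_5$ -- which happens exactly for $K=\Q(\!\sqrt2,\sqrt m)$ with $m\in\{3,5,6,10\}$ -- then condition~\ref{th:QFNotUniversal-cyclotomic} applies at once. For a general biquadratic $K$ one can instead verify condition~\ref{th:QFNotUniversal-units}: by Lemma~\ref{le:unitsDontDissappear} it is enough to locate a quadratic subfield whose fundamental unit is totally positive and not a square, and absent a clean uniform version of this one may simply invoke \cite{KTZ}, which already excludes universal ternary classical forms over \emph{all} biquadratic fields. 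The only non-biquadratic field captured by the cyclotomic condition is $F_{16}=\Q(\zeta_{16}^{}+\zeta_{16}^{-1})$, a cyclic quartic containing $\sqrt2$, handled by Lemma~\ref{le:CyclotomicSubfield}\ref{le:CyclotomicSubfield-2}.

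The heart of the matter is the remaining, infinite, family: the non-biquadratic quartics whose unique quadratic subfield is $\Q(\!\sqrt2)$ and which are not $F_{16}$. For these neither~\ref{th:QFNotUniversal-units} nor~\ref{th:QFNotUniversal-cyclotomic} is automatic, so I would argue by contradiction. Assuming one of the forms is universal over $K$, Proposition~\ref{pr:KKK} gives $\UKPlus=\UKctv$, while Corollary~\ref{co:NormIndecAboveSqrt2} combined with Lemma~\ref{le:smallAreIndecomposable}(a) forces \emph{every} totally positive integer of norm below $2^4=16$ to have norm $1$ or $4$; equivalently, $K$ contains no totally positive integer of norm $2,3,5,6,\dots,15$. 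The main obstacle is to convert this extremely rigid arithmetic constraint into an upper bound on $\disc K$ -- concretely, to exhibit a generator of $K$ of controlled house and feed it into Proposition~\ref{pr:KubasBound} -- thereby cutting the infinite family down to a finite list of fields. Each field on that list is then eliminated individually, either by reading off $\abs{\UKPlus/\UKctv}=h^+(K)/h(K)>1$ from \cite{LMFDB} (condition~\ref{th:QFNotUniversal-units}) or by displaying a single indecomposable of norm different from $1$ and $4$ (condition~\ref{th:QFNotUniversal-indec}). I expect this discriminant bound -- equivalently, the statement that so few and so small indecomposables can occur only for finitely many quartic fields -- to be the crux of the whole argument.
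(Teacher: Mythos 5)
Your reduction to Theorem~\ref{th:QFNotUniversal} is the right framing, and the easy cases you dispose of are handled correctly (biquadratic fields via \cite{KTZ} or condition~\ref{th:QFNotUniversal-cyclotomic}; the field $F_{16}$ via Lemma~\ref{le:CyclotomicSubfield}\ref{le:CyclotomicSubfield-2}). But the main case -- the infinite family of non-biquadratic quartic fields $K\ni\sqrt2$ other than $F_{16}$ -- is not proved: your plan is to convert the constraint \enquote{every indecomposable has norm $1$ or $4$} into a bound on $\disc K$ via Proposition~\ref{pr:KubasBound}, and you yourself flag this conversion as the unresolved crux. That step does not follow from anything in the paper or in your proposal. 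Proposition~\ref{pr:KubasBound} requires a generator of $K$ with \emph{bounded house} already in hand; the \emph{absence} of totally positive integers of norms $2,3,5,\dots,15$ produces no such element (this is the opposite direction from Lemma~\ref{le:solve1308fields}, where the small element $\omega$ is hypothesised to exist and is itself the generator of bounded house). Indeed, the paper explicitly remarks after Proposition~\ref{pr:IndecAboveSqrt2} that the only field \emph{known} to have indecomposables exactly $\UKctv\cup(2+\sqrt2)\UKctv$ is $\Q(\!\sqrt2)$ itself -- i.e., ruling out such indecomposable structure over a general field is precisely what the authors cannot do directly and must circumvent. So what you call the crux is a genuine gap, quite possibly as hard as the lemma itself.

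The paper's actual proof avoids indecomposables entirely and uses condition~\ref{th:QFNotUniversal-2OK}, the one branch of Theorem~\ref{th:QFNotUniversal} you never invoke. By \cite[Thm.~3.3]{KY-EvenBetter}, the only totally real quartic fields satisfying $2\OKPlus\subset\sum\square$ are $\Q(\!\sqrt2,\sqrt5)$ and $\Q(\zeta_{20}^{}+\zeta_{20}^{-1})$; hence every other quartic field satisfies $2\OKPlus\not\subset\sum^4\square$ and Theorem~\ref{th:QFNotUniversal}\ref{th:QFNotUniversal-2OK} applies in one stroke. Of the two exceptions, $\Q(\zeta_{20}^{}+\zeta_{20}^{-1})$ does not contain $\sqrt2$, and $\Q(\!\sqrt2,\sqrt5)$ is biquadratic, so it is eliminated by \cite{KTZ} (or by condition~\ref{th:QFNotUniversal-cyclotomic}, since it contains $F_5=\Q(\!\sqrt5)$). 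If you want to salvage your argument, replace your contradiction step for the non-biquadratic family with this sums-of-squares input; the pieces you proved can then be kept, though they become redundant.
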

	\begin{proof}
		By \cite[Thm.~3.3]{KY-EvenBetter}, no quartic fields except for $\Q(\!\sqrt2,\sqrt5)$ and $\Q(\zeta_{20}^{}+\zeta_{20}^{-1})$ satisfy $2\OKPlus\subset\sum\square$, so the claim follows from Theorem~\ref{th:QFNotUniversal}\ref{th:QFNotUniversal-2OK}. The field $\Q(\zeta_{20}^{}+\zeta_{20}^{-1})$ does not contain $\sqrt2$. The field $\Q(\!\sqrt2,\sqrt5)$ is biquadratic and these do not admit universal ternary quadratic forms by \cite{KTZ}; alternatively, it contains $F_5=\Q(\zeta_5^{}+\zeta_5^{-1})=\Q(\!\sqrt5)$, so the non-universality also follows from Theorem~\ref{th:QFNotUniversal}\ref{th:QFNotUniversal-cyclotomic}.
	\end{proof}

	\subsubsection{Lattices containing \texorpdfstring{${L_1}$}{L_1}} \label{ss:L1}
	
	Here we ask whether there can exist a universal ternary classical free lattice over $K$ containing $L_1\simeq\qf{1,1,2+\sqrt2}$ as a sublattice. 
	
	\begin{lemma}\label{le:nutneI3}
		Let $K \ni \sqrt2$ be a totally real number field of degree $4$ with $\UKPlus=\UKctv$. Let $L$ be a classical free lattice over $K$ containing $L_1\simeq\qf{1,1,2+\sqrt2}$ as a sublattice. Then $L\simeq\qf{1,1,2+\sqrt2}$ or $L\simeq\qf{1,1,1}$. 
	\end{lemma}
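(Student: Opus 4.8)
The plan is to realize $L$ as an orthogonal splitting off the obvious unimodular binary sublattice, and then pin down the remaining unary piece using a determinant comparison together with the hypothesis $\UKPlus=\UKctv$.

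Write the standard basis of $L_1$ as $\e_1,\e_2,\e_3$ with $Q(\e_1)=Q(\e_2)=1$, $Q(\e_3)=2+\sqrt2$, and all mixed products zero; these vectors lie in $L_1\subset L$. First I would split off $U:=\OK\e_1\oplus\OK\e_2$, an orthonormal (hence unimodular) binary sublattice of $L$. Since $L$ is classical, $B_Q(\vv,\e_i)\in\OK$ for every $\vv\in L$, so $\vv\mapsto B_Q(\vv,\e_1)\e_1+B_Q(\vv,\e_2)\e_2$ is a projection of $L$ onto $U$ whose kernel is $W:=\{\vv\in L: B_Q(\vv,\e_1)=B_Q(\vv,\e_2)=0\}$. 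This gives an internal orthogonal decomposition $L=U\perp W$ with $W$ of rank $1$. Being a rank-one direct summand of the free module $L$ with free complement $U$, the lattice $W$ is itself free, say $W=\qf{\delta}$ with $\delta=Q$ of a generator. Hence $L\simeq\qf{1,1,\delta}$.

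Next I would read off the arithmetic of $\delta$ from a determinant comparison. As $L_1\subset L$ are both free of rank $3$, expressing the basis of $L_1$ through a basis of $L$ via a matrix $T\in\OK^{3\times3}$ gives $G_{L_1}=TG_LT^{\mathrm{T}}$, so $\det L_1=(\det T)^2\det L$, i.e. $2+\sqrt2\equiv(\det T)^2\,\delta\pmod{\UKctv}$. Taking norms (which annihilate $\UKctv$) yields $\norm{K/\Q}{\det T}^2\cdot\norm{K/\Q}{\delta}=\norm{K/\Q}{2+\sqrt2}=4$, where the last equality uses $[K:\Q(\!\sqrt2)]=2$, so $\norm{K/\Q}{2+\sqrt2}=\norm{\Q(\!\sqrt2)/\Q}{2+\sqrt2}^2=2^2=4$. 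Since $\norm{K/\Q}{\delta}$ is a positive integer, $\norm{K/\Q}{\det T}^2$ is a perfect-square divisor of $4$, hence equals $1$ or $4$.

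Finally I would conclude by this dichotomy. If $\norm{K/\Q}{\det T}^2=1$, then $\det T\in\UK$, so $T$ is invertible over $\OK$ and $L=L_1\simeq\qf{1,1,2+\sqrt2}$. If $\norm{K/\Q}{\det T}^2=4$, then $\norm{K/\Q}{\delta}=1$; as $\delta\in\OK$ is totally positive (because $L$ is positive definite), it is a totally positive unit, and the hypothesis $\UKPlus=\UKctv$ forces $\delta=u^2$ for some $u\in\UK$. Rescaling the generator of $W$ by $u^{-1}$ gives $\qf{\delta}\simeq\qf{1}$, so $L\simeq\qf{1,1,1}$. The step requiring the most care is the splitting $L=U\perp W$—specifically that the unimodular $U$ is an orthogonal direct summand and that the complementary rank-one lattice $W$ is free—after which the norm bookkeeping and the final use of $\UKPlus=\UKctv$ are routine.
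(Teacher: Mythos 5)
Your proof is correct and takes essentially the same route as the paper: both split off the unimodular sublattice $\qf{1,1}$ to write $L\simeq\qf{1,1,\delta}$, then use $\norm{K/\Q}{2+\sqrt2}=4$ to force $\norm{K/\Q}{\delta}\in\{1,4\}$, and finally invoke $\UKPlus=\UKctv$ to replace the totally positive unit $\delta$ by $1$. Your determinant comparison via $T$ is the paper's relation $2+\sqrt2=t^2\alpha$ in disguise (in the natural bases $T=\mathrm{diag}(1,1,t)$), so the two arguments coincide.
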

	\begin{proof}
		We claim that every classical free overlattice of $L_1$ is of the form $L\simeq\qf{1,1,\alpha}$ for some $\alpha\in\OK$, where $2+\sqrt2 = \alpha t^2$ for some $t\in\OK$. One can either deduce this using $\dual{L_1}$ as in Subsection~\ref{ss:L3}, or see it easily as follows:
		
		It is in general true that any classical lattice containing $\qf{1,1}$ is of the form $\qf{1,1}\perp L_0$ where all elements of $L_0$ are orthogonal to $\qf{1,1}$. We now apply this statement both to $L_1$ and to $L$, both of which contain $\OK\e_1+\OK\e_2 \simeq \qf{1,1}$; here, $\e_1,\e_2,\f$ is the orthogonal basis of $L_1$ where $Q(\e_i)=1$ and $Q(\f)=2+\sqrt2$. Clearly, since $KL=KL_1$ is a ternary quadratic space, a vector from $KL_1$ is orthogonal to $\OK\e_1+\OK\e_2$ if and only if it is a $K$-multiple of $\f$. Thus $L=\OK\e_1+\OK\e_2+\OK\vv$ for some vector $\vv \in K\f$. Moreover, since $L_1\subset L$, we must have $\f\in L$, so $\f\in\OK\vv$. If we denote $\alpha=Q(\vv)$, we get $L\simeq \qf{1,1,\alpha}$ where $2+\sqrt2=Q(\f)=Q(t\vv)=t^2\alpha$ for some $t\in\OK$.
		
		Now we just apply this general description of classical free overlattices of $L_1$ to quartic fields with $\UKPlus=\UKctv$: If $t \in \UK$, then $\qf{1,1,\alpha}\simeq \qf{1,1,2+\sqrt2}\simeq L_1$. If $t \notin \UK$, then $\norm{K/\Q}{t}= \pm 2$ and $\alpha$ is a unit. Since $\UKPlus=\UKctv$, we have $\alpha\in\UKctv$ and therefore $\qf{1,1,\alpha}\simeq\qf{1,1,1}$.
	\end{proof}

	\begin{proposition} \label{pr:L1}
		Let $K\ni\sqrt2$ be a totally real number field with $[K:\Q]=4$. Let $L$ be a ternary classical free lattice over $K$ containing $L_1$ as a sublattice. Then $L$ is not universal.
	\end{proposition}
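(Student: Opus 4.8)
The plan is to argue by contradiction: suppose $L$ is universal. Since $K\ni\sqrt2$ and $L$ is then a universal ternary classical lattice, Proposition~\ref{pr:KKK} gives $\UKPlus=\UKctv$, so the hypotheses of Lemma~\ref{le:nutneI3} are met. That lemma forces $L\simeq\qf{1,1,2+\sqrt2}$ or $L\simeq\qf{1,1,1}$, and it remains to rule out each of these two isometry classes as a universal lattice over $K$.

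The first case is immediate: the lattice $\qf{1,1,2+\sqrt2}\simeq L_1$ corresponds to the quadratic form $Q_1$ from \eqref{eq:ListQFs}, which by Lemma~\ref{le:noLift} is not universal over any quartic field containing $\sqrt2$. This contradicts the universality of $L$.

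For the second case I would exploit the fact that $\qf{1,1,1}$ is the sum of three squares, so its universality over $K$ is exactly the assertion $\OKPlus\subset\sum^3\square$; in particular $2\OKPlus\subset\sum\square$. By the result used in the proof of Lemma~\ref{le:noLift}, namely \cite[Thm.~3.3]{KY-EvenBetter}, the only quartic fields satisfying $2\OKPlus\subset\sum\square$ are $\Q(\!\sqrt2,\sqrt5)$ and $\Q(\zeta_{20}^{}+\zeta_{20}^{-1})$. Since $\sqrt2\in K$ while $\sqrt2\notin\Q(\zeta_{20}^{}+\zeta_{20}^{-1})$, we must have $K=\Q(\!\sqrt2,\sqrt5)$. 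But this field is biquadratic, and by \cite{KTZ} no real biquadratic field admits a universal ternary classical quadratic form; as $\qf{1,1,1}$ is free it corresponds to such a form, giving the final contradiction. Hence neither isometry class survives, and $L$ cannot be universal.

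The bulk of the work is already carried out in Lemmas~\ref{le:nutneI3} and~\ref{le:noLift}, so the proof is largely an assembly of prior results. The only genuinely new step is the treatment of $\qf{1,1,1}$, and this is where the main (mild) obstacle lies: unlike $L_1$, the sum of three squares is not among the forms in \eqref{eq:ListQFs}, so it cannot be excluded by Lemma~\ref{le:noLift} or Theorem~\ref{th:QFNotUniversal} directly and must instead be handled by pinning down the field via the sum-of-squares criterion and then appealing to the biquadratic case \cite{KTZ}.
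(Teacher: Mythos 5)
Your proof is correct, and its skeleton coincides with the paper's: reduce to $\UKPlus=\UKctv$ via Proposition~\ref{pr:KKK}, apply Lemma~\ref{le:nutneI3} to force $L\simeq\qf{1,1,2+\sqrt2}$ or $L\simeq\qf{1,1,1}$, and kill the first case with Lemma~\ref{le:noLift}. The only divergence is in the second case. The paper disposes of $\qf{1,1,1}$ in one stroke by citing Siegel \cite{Si}: sums of any number of squares are universal over no totally real field other than $\Q$ and $\Q(\!\sqrt5)$, and a quartic $K$ is neither. You instead observe that universality of $\qf{1,1,1}$ gives $\OKPlus\subset\sum^3\square$, hence $2\OKPlus\subset\sum\square$, invoke \cite[Thm.~3.3]{KY-EvenBetter} to pin $K$ down to $\Q(\!\sqrt2,\sqrt5)$ or $\Q(\zeta_{20}^{}+\zeta_{20}^{-1})$, discard the latter since it lacks $\sqrt2$, and discard the former by the biquadratic result of \cite{KTZ}. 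Your chain is sound and non-circular (neither cited result depends on this proposition), but it routes a classical fact through two substantially heavier modern theorems; Siegel's theorem gives the same conclusion directly and without any case analysis on the field. The one thing your route buys is uniformity of method with Lemma~\ref{le:noLift}, since you reuse exactly the ingredients already deployed there -- at the cost of making the treatment of $\qf{1,1,1}$ look like a special obstacle when it is in fact the easy case.
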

	\begin{proof}
		It suffices to consider the case $\UKPlus=\UKctv$, otherwise there can be no universal ternary classical lattice over $K$ containing $\sqrt2$ by Proposition~\ref{pr:KKK}. Therefore, we can apply Lemma~\ref{le:nutneI3} to get $L\simeq\qf{1,1,1}$ or $L\simeq\qf{1,1,2+\sqrt2}$. However, the former is not universal by \cite{Si} and the latter by Lemma~\ref{le:noLift}.
	\end{proof}
	
	%----------------------------------------
	
	\subsubsection{Lattices containing \texorpdfstring{${L_2}$}{L_2}} \label{ss:L2}
	
	Recall that $L_2\simeq\qf{1}\perp \anglematrix{2+\sqrt2}1{2-\sqrt2}$. Note that $L_2$ is a classical free lattice with a unit determinant, i.e., it is unimodular.
	
	\begin{proposition} \label{pr:L2}
		Let $K\ni\sqrt2$ be a totally real number field with $[K:\Q]=4$. Let $L$ be a ternary classical lattice over $K$ containing $L_2$ as a sublattice. Then $L$ is not universal.
	\end{proposition}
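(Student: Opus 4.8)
The plan is to exploit the fact that, in contrast to the lattice $L_1$ treated in the previous subsection, the lattice $L_2$ is \emph{unimodular}, which rigidly constrains its classical overlattices and collapses the entire problem. First I would verify unimodularity directly from the Gram matrix: the binary block $\anglematrix{2+\sqrt2}{1}{2-\sqrt2}$ has determinant $(2+\sqrt2)(2-\sqrt2)-1=4-2-1=1$, so $\det L_2=1\in\UK$. Since $L_2$ is classical and free, a free lattice with unit determinant, this means precisely that $L_2$ is unimodular, i.e., $L_2=\dual{L_2}$.

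The key step is then immediate. By the remark following Lemma~\ref{le:DualLattice}, a unimodular lattice admits no proper classical overlattice. Concretely, $L$ is a classical lattice containing $L_2$, hence a classical overlattice of $L_2$, and Lemma~\ref{le:DualLattice} gives $L\subset\dual{L_2}$. Combining this with $L_2\subset L$ and $\dual{L_2}=L_2$ yields $L_2\subset L\subset L_2$, which forces $L=L_2$. I would emphasise that this argument needs neither a freeness hypothesis on $L$ nor the assumption $\UKPlus=\UKctv$; the unimodularity does all the work.

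It then remains only to show that $L_2$ itself is not universal over $K$. But $L_2$ corresponds to the quadratic form $Q_2$ of \eqref{eq:ListQFs}, and since $[K:\Q]=4$ and $K\ni\sqrt2$, Lemma~\ref{le:noLift} states exactly that $Q_2$ is not universal over $K$. Hence $L\simeq L_2$ is not universal, as claimed. I expect no genuine obstacle in this case: whereas for $L_1$ the nontrivial overlattice $\qf{1,1,1}$ had to be produced and ruled out separately (using $\UKPlus=\UKctv$ and Siegel's theorem), the unimodularity of $L_2$ reduces the overlattice analysis to the single lattice $L_2$, whose non-universality is already recorded in Lemma~\ref{le:noLift}.
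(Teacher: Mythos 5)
Your proposal is correct and follows essentially the same route as the paper's proof: observe that $L_2$ is unimodular (unit determinant, classical, free), conclude via Lemma~\ref{le:DualLattice} that it has no proper classical overlattice so $L=L_2$, and then invoke Lemma~\ref{le:noLift} for the non-universality of $L_2$ over $K$. The only difference is that you spell out the determinant computation and the containment argument $L_2\subset L\subset\dual{L_2}=L_2$, which the paper leaves implicit.
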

	\begin{proof}
		Due to unimodularity, $L_2$ has no proper classical overlattice, so in fact $L = L_2$; however, $L_2$ is not universal over $K$ according to Lemma~\ref{le:noLift}.
	\end{proof}
	
	%-------------------------------------------------
	\subsubsection{Lattices containing \texorpdfstring{${L_3}$}{L_3}} \label{ss:L3}
	
	Recall that $L_3\simeq\qf{1}\perp \anglematrix{2+\sqrt2}1{3}$. This is the last remaining lattice to be dealt with, as examining the lattice $L_3'\simeq\qf{1}\perp \anglematrix{2-\sqrt2}1{3}$ is unnecessary thanks to the symmetry.

	The direct analogy of Lemma~\ref{le:nutneI3} does not hold in this case. It is still true that every classical lattice over a quartic field containing $L_3$ as a sublattice is either $L_3$, or unimodular -- but this time we do not know that the unimodular lattice represents $\qf{1,1}$, as in the following example. In particular, it could happen that the lattice is not diagonalisable.
	
	\begin{example} \label{ex:exceptionalField}
		Let $K=\Q\bigl(\!\sqrt{3-\sqrt2}\bigr)$ and $\alpha=(1+\sqrt2)\sqrt{3-\sqrt2}=\sqrt{5+3\sqrt2}$.
		
		Set $L = \qf{1} \perp \anglematrix{3}{\alpha}{2+\sqrt2}$. Note that $L$ is unimodular, since $\det L = 1$. However, $L$ is not diagonalisable: this is easy to see from the fact that the binary part does not represent any unit at all (which can comfortably be verified by a computation, e.g., in Magma). We claim that $L$ contains a sublattice isometric to $L_3$.
		
		We exhibit an explicit sublattice of $\anglematrix{3}{\alpha}{2+\sqrt2}$ isometric to $\anglematrix{2+\sqrt2}1{3}$: let $\e$ and $\f$ be the vectors such that $Q(\e)=3$, $Q(\f)=2+\sqrt2$, $B_Q(\e,\f)=\alpha$. Then, for the vectors $\vx_1 = \f$ and $\vx_2 =-(1+\sqrt2)\sqrt{3-\sqrt2}\,\e + 3\f$, we indeed have $Q(\vx_1)=2+\sqrt2$, $Q(\vx_2)=3$, $B_Q(\vx_1,\vx_2)=1$.
		It follows that $L_3$ is isometric to a sublattice of $L$.
		
		The lattice $L$ is not universal, and in fact no universal ternary classical lattice can exist over this field by Proposition~\ref{pr:KKK} as $K$ contains both $\sqrt2$ and a nonsquare totally positive unit $\ve = -3\vartheta^3 + 6\vartheta^2 + 5\vartheta - 9$, where $\vartheta=\sqrt{3-\sqrt2}$. 
	\end{example}
	
	\medskip
	
	Our approach is to use the fact that all classical overlattices of $L_3$ are sublattices of its dual $\dual{L}_3$ (see Lemma~\ref{le:DualLattice}). Denote $L_0=\anglematrix{2+\sqrt2}1{3}$; then $L_3=\qf{1}\perp L_0$. Using that $\dual{(J_1\perp J_2)}\simeq\dual{J_1}\perp\dual{J_2}$ for any lattices $J_1$ and $J_2$ (see \cite[\S82.F]{OMbook}) and that $\dual{\qf{1}}=\qf{1}$, we get that $\dual{L_3}\simeq\qf{1}\perp\dual{L_0}$. In particular, we only need to compute $\dual{L_0}$. 
	
	For the rest of this subsection, we denote $(\e_1,\e_2)$ the basis of $L_0$ with respect to which the Gram matrix of $L_0$ is $G=\bigl(\begin{smallmatrix}2+\sqrt2&1\\1&3\end{smallmatrix}\bigr)$. Furthermore, let $(\f_1,\f_2)$ be the basis of $\dual{L_0}$ with respect to which the Gram matrix of $\dual{L_0}$ is $G^{-1}$.
	By a direct application of Lemma~\ref{le:DualGramMatrix}, we get the following lemma.
	
	\begin{lemma} \label{le:DualBasis}
		Let $\e_1,\e_2,\f_1,\f_2$ be as above. Then
		\begin{equation} \label{eq:DualBasis}
			\f_1=\frac{1}{5+3\sqrt2}(3\e_1-\e_2),\quad \f_2=\frac{1}{5+3\sqrt2}\bigl(-\e_1+(2+\sqrt2)\e_2\bigr).
		\end{equation}
		In particular, $\dual{L_0}\simeq\qf{H}$, where
		\[
		H=\frac{1}{5+3\sqrt2}\begin{pmatrix} 3 & -1 \\ -1 & 2+\sqrt2 \end{pmatrix}.
		\]
	\end{lemma}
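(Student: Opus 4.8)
The plan is to apply Lemma~\ref{le:DualGramMatrix} directly to the binary lattice $L_0=\anglematrix{2+\sqrt2}1{3}$ with its Gram matrix $G=\bigl(\begin{smallmatrix}2+\sqrt2&1\\1&3\end{smallmatrix}\bigr)$. The whole statement reduces to an inversion of a $2\times 2$ matrix together with the identification of what that inverse means in terms of bases. First I would record that $\det G = 3(2+\sqrt2)-1 = 5+3\sqrt2$, which is the denominator appearing throughout \eqref{eq:DualBasis} and in $H$. For a $2\times2$ symmetric matrix the inverse has the standard closed form
\[
G^{-1}=\frac{1}{\det G}\begin{pmatrix} 3 & -1 \\ -1 & 2+\sqrt2 \end{pmatrix}=\frac{1}{5+3\sqrt2}\begin{pmatrix} 3 & -1 \\ -1 & 2+\sqrt2 \end{pmatrix},
\]
which is exactly the claimed $H$. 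This immediately gives the last sentence of the lemma once we invoke part~\ref{le:DualGramMatrix-Gram} of Lemma~\ref{le:DualGramMatrix}, namely that the Gram matrix of $\dual{L_0}$ with respect to $(\f_1,\f_2)$ equals $G^{-1}$, so $\dual{L_0}\simeq\qf{H}$.

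For the explicit expressions of $\f_1,\f_2$ in \eqref{eq:DualBasis}, I would use part~\ref{le:DualGramMatrix-coeff} of Lemma~\ref{le:DualGramMatrix}: the coefficient matrix $(h_{ij})$ expressing the dual basis vectors $\f_j=\sum_i h_{ij}\e_i$ in terms of $(\e_1,\e_2)$ is precisely $G^{-1}=H$. Reading off the columns of $H$, the first column $\frac{1}{5+3\sqrt2}(3,-1)^{\mathrm T}$ gives $\f_1=\frac{1}{5+3\sqrt2}(3\e_1-\e_2)$, and the second column $\frac{1}{5+3\sqrt2}(-1,2+\sqrt2)^{\mathrm T}$ gives $\f_2=\frac{1}{5+3\sqrt2}\bigl(-\e_1+(2+\sqrt2)\e_2\bigr)$, matching \eqref{eq:DualBasis} exactly.

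Since Lemma~\ref{le:DualGramMatrix} is already available and does all the conceptual work, there is essentially no obstacle here: the proof is a one-line matrix inversion followed by reading off the rows/columns. The only thing to be slightly careful about is the bookkeeping convention, i.e., whether the dual basis vectors correspond to columns or rows of $G^{-1}$; the statement of Lemma~\ref{le:DualGramMatrix}\ref{le:DualGramMatrix-coeff} fixes this as $\f_j=\sum_i h_{ij}\e_i$, so the $\f_j$ are indexed by the columns of $H$, and this is consistent with what is written in \eqref{eq:DualBasis}. Thus the proof is simply: \emph{By a direct application of Lemma~\ref{le:DualGramMatrix}, with $G^{-1}$ computed as above.}
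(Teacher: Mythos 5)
Your proof is correct and matches the paper exactly: the paper also obtains Lemma~\ref{le:DualBasis} as ``a direct application of Lemma~\ref{le:DualGramMatrix}'', i.e., by computing $\det G = 5+3\sqrt2$ and inverting the $2\times2$ Gram matrix. Your extra care about the column convention in part~\ref{le:DualGramMatrix-coeff} is exactly the right bookkeeping and introduces no discrepancy.
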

	
	We have $L_0=\OK\e_1+\OK\e_2$ and $\dual{L_0}=\OK\f_1+\OK\f_2$. In the following two lemmas, we will find alternative pseudobases of $L_0$ and $\dual{L_0}$ that will be easier to compare. We put $p_7=5+3\sqrt2$ to increase readability.
	
	\begin{lemma} \label{le:L0AltGenerators}
		Let $\e_1,\e_2,\f_1,\f_2$ be as above. Then $L_0=\OK p_7\f_1+\OK\e_1$.
	\end{lemma}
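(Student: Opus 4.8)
The plan is to verify the equality of two $\OK$-modules by a direct computation. First I would compute $p_7\f_1$ using the explicit formula \eqref{eq:DualBasis}: since $p_7 = 5+3\sqrt2$ is exactly the denominator occurring in $\f_1 = \frac{1}{5+3\sqrt2}(3\e_1-\e_2)$, the two cancel and we obtain the integral vector $p_7\f_1 = 3\e_1 - \e_2$, which manifestly lies in $L_0 = \OK\e_1 + \OK\e_2$.

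Next I would show that the pair $(p_7\f_1, \e_1) = (3\e_1 - \e_2, \e_1)$ is an $\OK$-basis of $L_0$, equivalently that $\OK p_7\f_1 + \OK\e_1 = \OK\e_1 + \OK\e_2$. The inclusion $\OK p_7\f_1 + \OK\e_1 \subseteq L_0$ is immediate, as both generators lie in $L_0$ by the previous step. For the reverse inclusion it suffices to express $\e_2$ in terms of the new generators, and indeed $\e_2 = 3\e_1 - (3\e_1 - \e_2) = 3\e_1 - p_7\f_1 \in \OK p_7\f_1 + \OK\e_1$. Equivalently, the transition matrix $\bigl(\begin{smallmatrix} 3 & 1 \\ -1 & 0 \end{smallmatrix}\bigr)$ expressing $(p_7\f_1, \e_1)$ in terms of $(\e_1, \e_2)$ has determinant $1 \in \UK$, so the two pairs generate the same lattice.

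There is no genuine obstacle here; the only point to watch is the cancellation of $p_7$ against the denominator of $\f_1$, which is precisely why the element $p_7 = 5 + 3\sqrt2$ (of norm $7$, whence the subscript) was singled out rather than an arbitrary scalar. The value of the reformulation is that it trades the fractional generator $\f_1 \in \dual{L_0}$ for the integral vector $p_7\f_1 \in L_0$; this puts $L_0$ and $\dual{L_0}$ on a common footing and will make the subsequent comparison of their pseudobases routine.
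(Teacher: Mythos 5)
Your proof is correct and is essentially identical to the paper's: both rely on the cancellation $p_7\f_1 = 3\e_1 - \e_2$ from \eqref{eq:DualBasis} and then verify the two inclusions via $\e_2 = 3\e_1 - p_7\f_1$. The added remark about the unimodular transition matrix is a harmless alternative phrasing of the same double-inclusion argument.
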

	\begin{proof}
		Denote $L=\OK p_7\f_1+\OK\e_1$. We want to prove $L=L_0$.
		
		Clearly, $\e_1\in L$. As $\e_2=3\e_1-p_7\f_1$, it follows that also $\e_2\in L$. Thus, $L_0\subset L$.
		
		On the other hand, $p_7\f_1=(3\e_1-\e_2)\in L_0$ and $\e_1\in L_0$; therefore, $L\subset L_0$. 
	\end{proof}
	
	\begin{lemma} \label{le:L0DualAltGenerators}
		Let $\e_1,\e_2,\f_1,\f_2$ be as above. Then $\dual{L}_0=\OK\f_1+\OK\e_1$.
	\end{lemma}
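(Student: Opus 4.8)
The plan is to establish the two inclusions $\OK\f_1 + \OK\e_1 \subset \dual{L}_0$ and $\dual{L}_0 \subset \OK\f_1 + \OK\e_1$ separately, relying on the explicit basis change recorded in Lemma~\ref{le:DualBasis} together with the relation already extracted in the proof of Lemma~\ref{le:L0AltGenerators}.

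For the inclusion $\OK\f_1 + \OK\e_1 \subset \dual{L}_0$, the generator $\f_1$ lies in $\dual{L}_0$ by construction, as it is a member of the dual basis. For $\e_1$, I would note that $\e_1 \in L_0 \subset \dual{L}_0$: the containment $L_0 \subset \dual{L_0}$ holds because $L_0$ is classical, so Lemma~\ref{le:DualLattice} applies. Alternatively, one checks directly that $B_Q(\e_1,\e_1) = 2+\sqrt2 \in \OK$ and $B_Q(\e_1,\e_2) = 1 \in \OK$, which is exactly the condition for $\e_1$ to lie in the dual.

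The substantive direction is $\dual{L}_0 = \OK\f_1 + \OK\f_2 \subset \OK\f_1 + \OK\e_1$, for which it suffices to write $\f_2$ as an $\OK$-combination of $\f_1$ and $\e_1$. Starting from the identity $p_7\f_1 = 3\e_1 - \e_2$ (with $p_7 = 5+3\sqrt2$, as in Lemma~\ref{le:L0AltGenerators}), I would solve $\e_2 = 3\e_1 - p_7\f_1$ and substitute this into the formula $\f_2 = p_7^{-1}\bigl(-\e_1 + (2+\sqrt2)\e_2\bigr)$ from Lemma~\ref{le:DualBasis}. The coefficient of $\e_1$ then simplifies via $-1 + 3(2+\sqrt2) = 5+3\sqrt2 = p_7$, so that after cancelling the common factor $p_7$ one arrives at the clean identity $\f_2 = \e_1 - (2+\sqrt2)\f_1$. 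This exhibits $\f_2 \in \OK\f_1 + \OK\e_1$ and closes the argument, giving the claimed pseudobasis (which is deliberately parallel to the one for $L_0$, so that the two lattices can be compared in what follows).

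I do not expect a genuine obstacle here: the entire content is the single cancellation $-1 + 3(2+\sqrt2) = p_7$, which is precisely what makes the denominator $1/p_7$ disappear. The only point requiring care is the bookkeeping of coefficients, ensuring that the final expression for $\f_2$ has coefficients in $\OK$ rather than merely in $K$; the vanishing of the denominator is exactly what guarantees integrality.
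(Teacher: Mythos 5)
Your proof is correct; I verified the key identity: with $p_7=5+3\sqrt2$ and $\f_1=\frac{1}{p_7}(3\e_1-\e_2)$, one has $\e_1-(2+\sqrt2)\f_1=\frac{1}{p_7}\bigl((5+3\sqrt2-6-3\sqrt2)\e_1+(2+\sqrt2)\e_2\bigr)=\f_2$, and the trivial inclusions $\f_1\in\dual{L_0}$, $\e_1\in L_0\subset\dual{L_0}$ are as you state. The overall strategy (two inclusions, explicit $\OK$-linear algebra with the dual basis of Lemma~\ref{le:DualBasis}) matches the paper, but your central computation is genuinely different and somewhat cleaner. The paper instead shows that $\f_2-2\f_1\in L_0$, via the computation $p_7(\f_2-2\f_1)=-7\e_1+(4+\sqrt2)\e_2=p_7\bigl(-\overline{p}_7\e_1+(2-\sqrt2)\e_2\bigr)$ with $\overline{p}_7=5-3\sqrt2$, which gives $\dual{L_0}\subset\OK\f_1+L_0$; it then invokes Lemma~\ref{le:L0AltGenerators} to rewrite $\OK\f_1+L_0=\OK\f_1+\OK p_7\f_1+\OK\e_1=\OK\f_1+\OK\e_1$. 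Your route expresses $\f_2$ directly in terms of the claimed generators, so it needs only the basis-change formula of Lemma~\ref{le:DualBasis} and bypasses Lemma~\ref{le:L0AltGenerators} entirely; what the paper's route buys is the reuse of machinery it has already set up (both alternative pseudobases are needed anyway for Proposition~\ref{pr:L0Summary}), at the cost of a slightly longer chain of containments.
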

	\begin{proof}
		As the first step, we claim that $\f_2-2\f_1\in L_0$: denoting $\overline{p}_7=5-3\sqrt2$ (so that $p_7\overline{p}_7=7$), we have
		\begin{multline*}
			p_7(\f_2-2\f_1)
			=\bigl(-\e_1+(2+\sqrt2)\e_2\bigr)-2(3\e_1-\e_2)
			=-7\e_1+(4+\sqrt2)\e_2\\
			= p_7(-\overline{p}_7)\e_1+p_7(2-\sqrt2)\e_2
			=p_7\bigl(-\overline{p}_7\e_1+(2-\sqrt2)\e_2\bigr)\in p_7 L_0.
		\end{multline*}
		It follows that $\dual{L_0}=\OK\f_1+\OK\f_2\subset\OK\f_1+L_0$. On the other hand, we have $L_0\subset\dual{L_0}$ by Lemma~\ref{le:DualLattice} and clearly $\OK\f_1\subset\dual{L_0}$. Using Lemma~\ref{le:L0AltGenerators}, we thus get
		\[
		\dual{L_0}=\OK\f_1+L_0=\OK\f_1+\OK p_7\f_1+\OK\e_1=\OK\f_1+\OK\e_1.\qedhere
		\]
	\end{proof}
	
	Using Lemmas~\ref{le:L0AltGenerators} and~\ref{le:L0DualAltGenerators}, we can now easily describe all overlattices of $L_0$.
	
	\begin{proposition} \label{pr:L0Summary}
		Let $\e_1,\e_2,\f_1,\f_2$ be as above. Let $K\ni\sqrt2$ be a totally real number field. Furthermore, denote $p_7=5+3\sqrt2$. Then the lattices $L'$ satisfying $L_0 \subset L' \subset \dual{L_0}$ are precisely the lattices of the form
		\[
		L' = \ia \f_1 + \OK\e_1 \quad \text{with an ideal $\ia$ satisfying } p_7\OK \subset \ia \subset \OK.
		\]
		In particular, $L_0$ corresponds to the choice $\ia=p_7\OK$ and $\dual{L_0}$ to the choice $\ia=\OK$. Furthermore:
		\begin{enumerate}
			\item $L'$ is free if and only if $\ia$ is a principal ideal; \label{pr:L0Summary-free}
			\item $L'$ is classical if and only if $\ia^2 \subset p_7\OK$.\label{pr:L0Summary-classical}
		\end{enumerate}
	\end{proposition}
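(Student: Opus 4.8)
The plan is to work entirely with the $\OK$-module structure afforded by Lemmas~\ref{le:L0AltGenerators} and~\ref{le:L0DualAltGenerators}. Since $\f_1$ and $\e_1$ are $K$-linearly independent (their coordinate vectors in the basis $(\e_1,\e_2)$ have determinant $1/p_7\neq0$), Lemma~\ref{le:L0DualAltGenerators} says that $\dual{L_0}=\OK\f_1\oplus\OK\e_1$ is a free $\OK$-module with basis $(\f_1,\e_1)$, while Lemma~\ref{le:L0AltGenerators} rewrites $L_0=p_7\OK\f_1+\OK\e_1$. In particular $\OK\e_1\subset L_0\subset L'$ for every intermediate lattice. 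I would therefore pass to the quotient $\dual{L_0}/\OK\e_1\cong\OK$ (via $a\f_1\mapsto a$): the $\OK$-submodules $L'$ with $\OK\e_1\subset L'\subset\dual{L_0}$ correspond bijectively to ideals $\ia\subset\OK$, the correspondence being $L'=\ia\f_1+\OK\e_1$. Under this dictionary the condition $L_0\subset L'$ reads $p_7\OK\subset\ia$ and $L'\subset\dual{L_0}$ reads $\ia\subset\OK$, giving exactly $p_7\OK\subset\ia\subset\OK$; the endpoints $\ia=p_7\OK$ and $\ia=\OK$ recover $L_0$ and $\dual{L_0}$.

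For part~\ref{pr:L0Summary-free}, note that $L'=\ia\f_1\oplus\OK\e_1\cong\ia\oplus\OK$ as $\OK$-modules. By the structure theory of finitely generated projective modules over a Dedekind domain, $\ia\oplus\OK$ is free precisely when its Steinitz class $[\ia]$ is trivial, i.e.\ when $\ia$ is principal; this gives \ref{pr:L0Summary-free} at once.

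For part~\ref{pr:L0Summary-classical} I would test the defining condition $B_Q(\vu,\vv)\in\OK$ on generators of $L'$. Writing $\vu=a\f_1+b\e_1$ and $\vv=a'\f_1+b'\e_1$ with $a,a'\in\ia$ and $b,b'\in\OK$, bilinearity gives
\[
B_Q(\vu,\vv)=\frac{3}{p_7}\,aa'+(ab'+a'b)+(2+\sqrt2)\,bb',
\]
where I have used $B_Q(\e_1,\e_1)=2+\sqrt2$, the dual-basis relation $B_Q(\f_1,\e_1)=1$, and $B_Q(\f_1,\f_1)=(G^{-1})_{11}=3/p_7$. The last two summands always lie in $\OK$, so $L'$ is classical if and only if $\frac{3}{p_7}aa'\in\OK$ for all $a,a'\in\ia$, that is, $3\ia^2\subset p_7\OK$. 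The one genuine point is then to remove the factor $3$: since $\norm{\Q(\!\sqrt2)/\Q}{p_7}=7$, the ideal $p_7\OK$ is supported on primes above $7$ and hence coprime to $3\OK$, so $p_7\OK\mid 3\ia^2$ forces $p_7\OK\mid\ia^2$. Thus $3\ia^2\subset p_7\OK$ is equivalent to $\ia^2\subset p_7\OK$, the stated criterion. I expect this coprimality step to be the only place requiring an arithmetic argument rather than bookkeeping; everything else is a direct translation between intermediate lattices and ideals.
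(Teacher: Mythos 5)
Your proof is correct and follows essentially the same route as the paper's: both classify the intermediate lattices via the generators from Lemmas~\ref{le:L0AltGenerators} and~\ref{le:L0DualAltGenerators}, both deduce freeness from the pseudobasis $(\f_1,\e_1)$ (your Steinitz-class phrasing is the same fact), and both reduce classicality to $3\ia^2\subset p_7\OK$ using $Q(\f_1)=3/p_7$ and then discard the factor $3$ by coprimality of $3$ and $p_7$. Your explicit correspondence-theorem argument for the bijection and the fully expanded bilinear form are slightly more detailed than the paper's shortcuts, but the content is identical.
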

	\begin{proof}
		First, recall that $L_0=\OK p_7\f_1+\OK\e_1$ by Lemma~\ref{le:L0AltGenerators} and $\dual{L_0}=\OK\f_1+\OK\e_1$ by Lemma~\ref{le:L0DualAltGenerators}. It follows that any lattice $L'$ satisfying $L_0\subset L'\subset\dual{L_0}$ must be of the form $L'=\ia\f_1+\OK\e_1$ where $\ia$ is an ideal satisfying $p_7\OK\subset\ia\subset\OK$.
		
		Let $L'=\ia\f_1+\OK\e_1$ be one such lattice. 
		
		To prove \ref{pr:L0Summary-free}, recall that $(\f_1,\e_1)$ is a pseudobasis of $L'$. Thus, $L'$ is free if and only if $\ia$ is in the same class as $\OK$ in the class group of $K$, i.e., if and only if $\ia$ is a principal ideal. 
		
		As for \ref{pr:L0Summary-classical}, we simply need to verify for each pair of elements $\vv_1,\vv_2 \in L'$ whether $B_Q(\vv_1,\vv_2) \in \OK$. Let $\vv_i=\alpha_i\f_1+\beta_i\e_1$ with $\alpha_i\in\ia$ and $\beta_i\in\OK$ for $i=1,2$. As $L_0\subset L'\subset\dual{L_0}$, we can skip most of the verification: we already know that $B_Q(\vu,\vw)\in\OK$ for all $\vu\in L'$ and $\vw\in L_0$.  Hence, after expanding the expression $B_Q(\vv_1,\vv_2)=B_Q(\alpha_1\f_1+\beta_1\e_1,\alpha_2\f_1+\beta_2\e_1)$, we see that the lattice $L'$ is classical if and only if $B_Q(\alpha_1\f_1,\alpha_2\f_1) \in \OK$ for every $\alpha_1,\alpha_2\in\mathfrak{a}$. Recall that $Q(\f_1)=\frac{3}{p_7}$ by Lemma~\ref{le:DualBasis}; hence, $L'$ is classical if and only if
		\[
		\alpha_1\alpha_2 \frac{3}{p_7} \in \OK \quad \text{ for every } \alpha_1,\alpha_2\in\ia.
		\]
		When we sum this condition over all elements $\alpha_1,\alpha_2$, we obtain (from the definition of the product of ideals) the equivalent condition:
		\[
		\ia^2 \frac{3}{p_7} \subset \OK.
		\]
		Finally, since $3$ and $p_7$ are relatively prime (they have relatively prime norms), $3$ cannot play any role. The given condition is therefore indeed equivalent to
		\[
		\ia^2 \subset p_7\OK. \qedhere
		\]
	\end{proof}
	
	\begin{corollary} \label{co:summary}
		Let $K\ni\sqrt2$ be a totally real number field. The lattice $L_3$ has a proper classical free overlattice over $K$ if and only if $p_7=5+3\sqrt2$ is not squarefree.
	\end{corollary}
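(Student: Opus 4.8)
The plan is to reduce the ternary statement about $L_3$ to the binary lattice $L_0=\anglematrix{2+\sqrt2}1{3}$, and then to convert the resulting condition on overlattices of $L_0$ into a divisibility statement about $p_7$.

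First I would establish the reduction. By Lemma~\ref{le:DualLattice}, every classical overlattice of $L_3$ lies between $L_3=\qf{1}\perp L_0$ and $\dual{L_3}\simeq\qf{1}\perp\dual{L_0}$. Letting $\vu$ generate the summand $\qf{1}$ (so $Q(\vu)=1$), I would show that any classical overlattice $\tilde{L}$ splits as $\tilde{L}=\qf{1}\perp L'$ with $L_0\subset L'\subset\dual{L_0}$: writing $\vx=c\vu+\vw$ with $c\in\OK$ and $\vw\in\dual{L_0}$ for $\vx\in\tilde{L}$, the inclusion $\vu\in L_3\subset\tilde{L}$ forces $\vw\in\tilde{L}$, so $L'=\tilde{L}\cap\dual{L_0}$ is as claimed. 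Since $\qf{1}$ is unimodular, $\tilde{L}$ is classical, resp. free, exactly when $L'$ is, and $\tilde{L}$ is proper over $L_3$ exactly when $L'\neq L_0$; conversely each such $L'$ yields an overlattice $\qf{1}\perp L'$. Hence $L_3$ admits a proper classical free overlattice if and only if $L_0$ does.

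Next I would apply Proposition~\ref{pr:L0Summary}: a proper classical free overlattice of $L_0$ is exactly a lattice $\ia\f_1+\OK\e_1$ whose ideal $\ia$ is principal and satisfies $p_7\OK\subsetneq\ia\subset\OK$ and $\ia^2\subset p_7\OK$. Writing $\ia=\gamma\OK$, these become $\gamma\mid p_7$, $p_7\mid\gamma^2$, and $\gamma\OK\neq p_7\OK$ (the condition $p_7\mid\gamma^2$ already forcing $\gamma\notin\UK$). It then remains to prove that such a $\gamma$ exists if and only if $p_7$ is not squarefree. If $p_7=\omega^2\delta$ with $\omega\in\OK\setminus\UK$ and $\delta\in\OK$, then $\gamma=\omega\delta$ works, since $p_7/\gamma=\omega\in\OK$, $\gamma^2/p_7=\delta\in\OK$, and $p_7/\gamma=\omega\notin\UK$. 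Conversely, from such a $\gamma$ I would set $\beta=p_7/\gamma\in\OK$ (a non-unit, as otherwise $\gamma\OK=p_7\OK$), deduce $\beta\mid\gamma$ from $\gamma/\beta=\gamma^2/p_7\in\OK$, and write $\gamma=\beta\epsilon$ to obtain $p_7=\beta^2\epsilon$, so that $\beta^2\mid p_7$ with $\beta\notin\UK$.

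The main obstacle is the arithmetic equivalence of the last step, where the paper's nonstandard notion of squarefreeness (divisibility by a square of an \emph{element}, not of an ideal) appears naturally; the delicate point is aligning ``proper overlattice'' with the condition $\gamma\OK\neq p_7\OK$, i.e. keeping strict divisibility and associate classes matched on the two sides, so that the purely arithmetic non-unit $\beta$ corresponds exactly to a strictly larger lattice. By contrast, the splitting in the first step is routine once one invokes the unimodularity of $\qf{1}$ together with the decomposition $\dual{L_3}\simeq\qf{1}\perp\dual{L_0}$.
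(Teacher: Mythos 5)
Your proposal is correct and follows essentially the same route as the paper: reduce to $L_0$ via the splitting $\qf{1}\perp L'$, invoke Proposition~\ref{pr:L0Summary} to translate classical free overlattices into principal ideals $\gamma\OK$ with $\gamma \mid p_7 \mid \gamma^2$, and then perform the same divisibility manipulation (your $\beta = p_7/\gamma$ is the paper's $t$) to reach squarefreeness. The only difference is one of detail: you spell out the orthogonal-splitting argument and both directions of the arithmetic equivalence, which the paper's proof states more tersely.
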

	\begin{proof}
		Instead of $L_3 = \qf{1} \perp L_0$, it suffices to consider only $L_0$; indeed, the classical overlattices of $L_3$ are precisely the lattices of the form $\qf{1} \perp L'$, where $L'$ is a classical overlattice of $L_0$.
		
		Thanks to  Lemma~\ref{le:DualLattice}, all classical overlattices of $L_0$ are contained in $\dual{L_0}$; all such lattices are described in Proposition~\ref{pr:L0Summary}. In particular, it follows that the classical free overlattices of $L_0$ correspond to the principal ideals $\mathfrak{a}$ satisfying $\mathfrak{a}^2 \subset p_7\OK \subset \mathfrak{a}$, i.e., to numbers $\alpha\in\OK$ satisfying $\alpha \mid p_7 \mid \alpha^2$; a proper sublattice corresponds to any non-trivial choice, i.e., $\alpha \notin p_7\UK$.
		
		These conditions can be rewritten as $p_7=\alpha t$ for $t \notin \UK$ together with $\alpha t \mid \alpha^2$, i.e., $t \mid \alpha$. Therefore, $\alpha=ts$ for some $s\in\OK$; hence, $p_7=t^2s$.
	\end{proof}
	
	We can finally prove the main result about overlattices of $L_3$.
	
	\begin{proposition} \label{pr:L3noFreeOverlattice}
		Let $K\ni\sqrt2$ be a quartic field in which $\UKPlus=\UKctv$. Then $L_3$ does not have a proper classical free overlattice over $K$. 
	\end{proposition}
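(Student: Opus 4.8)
The plan is to translate the statement into a divisibility question via Corollary~\ref{co:summary}: a proper classical free overlattice of $L_3$ exists over $K$ if and only if $p_7=5+3\sqrt2$ fails to be squarefree in $\OK$. Hence it suffices to prove that, under the hypotheses $\sqrt2\in K$, $[K:\Q]=4$ and $\UKPlus=\UKctv$, the element $p_7$ is squarefree. I would argue by contradiction, assuming $\omega^2\mid p_7$ for some $\omega\in\OK\setminus\UK$, and show that this forces $K$ to be one specific field which violates $\UKPlus=\UKctv$.

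First I would pin down the norm. Since $p_7\in\Q(\!\sqrt2)$ and $[K:\Q(\!\sqrt2)]=2$, we have $\norm{K/\Q}{p_7}=\bigl(\norm{\Q(\sqrt2)/\Q}{p_7}\bigr)^2=7^2=49$. Writing $p_7=\omega^2\mu$ with $\mu\in\OK$ and taking norms gives $\norm{K/\Q}{\omega}^2\cdot\norm{K/\Q}{\mu}=49$; as $\omega$ is a non-unit, $\abs{\norm{K/\Q}{\omega}}\ge2$, and the only perfect square $\ge4$ dividing $49$ is $49$ itself, so $\norm{K/\Q}{\omega}^2=49$ and $\abs{\norm{K/\Q}{\mu}}=1$, i.e.\ $\mu\in\UK$. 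Now $p_7$ is totally positive (its two conjugates $5\pm3\sqrt2$ are positive) and $\omega^2\succ0$ since $\omega\neq0$, so $\mu=p_7/\omega^2\in\UKPlus$. Invoking the hypothesis $\UKPlus=\UKctv$, we get $\mu=\nu^2$ for some $\nu\in\UK$, whence $p_7=(\omega\nu)^2$ is a perfect square in $\OK$.

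It remains to identify the resulting field. Setting $t=\omega\nu$, we have $t^2=5+3\sqrt2$, and since $\norm{\Q(\sqrt2)/\Q}{5+3\sqrt2}=7$ is not a perfect square, $5+3\sqrt2$ is not a square in $\Q(\!\sqrt2)$; hence $t\notin\Q(\!\sqrt2)$ and $\Q(t)$ is quartic. As $\Q(t)\subset K$ and $[K:\Q]=4$, we conclude $K=\Q(t)=\Q\bigl(\!\sqrt{5+3\sqrt2}\bigr)$. This is exactly the field of Example~\ref{ex:exceptionalField}, because the identity $\sqrt{5+3\sqrt2}=(1+\sqrt2)\sqrt{3-\sqrt2}$ yields $\Q\bigl(\!\sqrt{5+3\sqrt2}\bigr)=\Q\bigl(\!\sqrt{3-\sqrt2}\bigr)$. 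But that field possesses a nonsquare totally positive unit (the element $\ve$ exhibited in Example~\ref{ex:exceptionalField}), so $\UKPlus\neq\UKctv$ there, contradicting our hypothesis. Therefore $p_7$ is squarefree, and the proposition follows from Corollary~\ref{co:summary}.

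The only genuinely non-formal input is the last step: the fact that $\Q\bigl(\!\sqrt{3-\sqrt2}\bigr)$ fails $\UKPlus=\UKctv$. The norm bookkeeping and the upgrade from a unit multiple to a perfect square are routine; the real content is that the \emph{single} candidate field obstructing squarefreeness is precisely the exceptional field already isolated in Example~\ref{ex:exceptionalField}, and that its unit group is understood via the explicit nonsquare totally positive unit (equivalently, via its narrow and wide class numbers from \cite{LMFDB}). I expect this identification, together with verifying $\UKPlus\neq\UKctv$ for that one field, to be the crux of the argument.
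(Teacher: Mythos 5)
Your proposal is correct and follows essentially the same route as the paper's proof: reduce via Corollary~\ref{co:summary} to showing $p_7=5+3\sqrt2$ is squarefree, deduce from $\UKPlus=\UKctv$ and total positivity that a square divisor would force $p_7=\square$, identify $K=\Q\bigl(\!\sqrt{3-\sqrt2}\bigr)$, and contradict the hypothesis using the nonsquare totally positive unit from Example~\ref{ex:exceptionalField}. The only (harmless) difference is that you reach $p_7=(\text{unit})\cdot\pi^2$ by norm bookkeeping, whereas the paper uses the primality of $p_7$ in $\Q(\!\sqrt2)$ and the resulting ideal factorization $p_7\OK=\ip^2$ with $\ip$ principal.
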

	\begin{proof}
		By Corollary~\ref{co:summary}, we know that for a general field $K$, the lattice $L_3$ has a proper classical free overlattice if and only if $p_7=5+3\sqrt2$ is not squarefree. It remains to apply this to the case of a quartic field with units of all signatures.
		
		Note that $p_7$ is a prime in $\Q(\!\sqrt2)$. Since $K/\Q(\!\sqrt2)$ is an extension of degree $2$, the algebraic integer $p_7$ can only be divisible by a square if $p_7\OK=\ip^2$ and $\ip=\pi\OK$ is a principal ideal. This means that
		\[
		p_7 = \ve\pi^2 \qquad \text{ for } \ve \in \UK.
		\]
		Since both $p_7$ and $\pi^2$ are totally positive, we obtain that $\ve \in \UKPlus$. By assumption, $\ve=\eta^2$ for $\eta \in \UK$. Therefore, $p_7 = (\eta \pi)^2$. The field $K$ therefore necessarily contains $\sqrt{p_7}$. And since $p_7=5+3\sqrt2=(3-\sqrt2)(1+\sqrt2)^2$, it also contains $\sqrt{3-\sqrt2}$. However, for a quartic field, this forces $K = \Q\bigl(\!\sqrt{3-\sqrt2}\bigr)$. As we have seen in Example~\ref{ex:exceptionalField}, this field contains a nonsquare totally positive unit, and hence does not fulfil the assumptions.
	\end{proof}
	
	Note that the previous proposition talks only about \emph{free} lattices. A \emph{non-free} overlattice can easily exist -- in fact, it exists whenever $3-\sqrt2$ ramifies. 
	
	Now, as a corollary of Proposition~\ref{pr:L3noFreeOverlattice}, we get a complete result for all classical free lattices over quartic fields containing $L_3$.
	
	\begin{proposition} \label{pr:L3}
		Let $K\ni\sqrt2$ be a totally real number field with $[K:\Q]=4$. Let $L$ be a ternary classical free lattice over $K$ containing $L_3$ as a sublattice. Then $L$ is not universal.
	\end{proposition}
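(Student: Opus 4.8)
The plan is to reduce Proposition~\ref{pr:L3} to the two results already established for $L_3$, in direct analogy with the proof of Proposition~\ref{pr:L1}. The key observation is that we only ever need to worry about fields with units of all signatures: if $\UKPlus \neq \UKctv$, then by Proposition~\ref{pr:KKK} there is no universal ternary classical lattice over $K$ whatsoever, so in particular the given $L$ cannot be universal and we are done. I would therefore open the proof by restricting to the case $\UKPlus = \UKctv$.

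In that case, the structure of classical free overlattices of $L_3$ is completely pinned down by Proposition~\ref{pr:L3noFreeOverlattice}: over a quartic field with $\UKPlus = \UKctv$, the lattice $L_3$ has \emph{no} proper classical free overlattice. Since $L$ is by hypothesis a ternary classical free lattice containing $L_3$ as a sublattice --- i.e., $L$ is a classical free overlattice of $L_3$ --- this forces $L \simeq L_3$.

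Finally, $L_3$ is (isometric to) the lattice corresponding to the quadratic form $Q_3$ from \eqref{eq:ListQFs}, and Lemma~\ref{le:noLift} states precisely that none of the quadratic forms in \eqref{eq:ListQFs} that are universal over $\Q(\!\sqrt2)$ remains universal over a quartic $K \ni \sqrt2$. Hence $L \simeq L_3$ is not universal over $K$, which completes the argument.

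As for where the difficulty lies: essentially all the real work has already been done upstream. The delicate part is Proposition~\ref{pr:L3noFreeOverlattice} (which in turn rests on the dual-lattice computations of Subsection~\ref{ss:L3}, on Corollary~\ref{co:summary}, and on the identification of the exceptional field $\Q\bigl(\!\sqrt{3-\sqrt2}\bigr)$ in Example~\ref{ex:exceptionalField}), together with the lifting obstruction in Lemma~\ref{le:noLift}. Given those, the present proposition is a short bookkeeping argument; the only point demanding a little care is to confirm that ``containing $L_3$ as a sublattice'' really does place $L$ among the classical free overlattices of $L_3$ that are covered by Proposition~\ref{pr:L3noFreeOverlattice}, rather than a non-free overlattice (which, as the remark following that proposition notes, can genuinely exist). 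This is guaranteed here by the hypothesis that $L$ is free.
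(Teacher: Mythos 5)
Your proof is correct and follows exactly the paper's own argument: reduce to $\UKPlus=\UKctv$ via Proposition~\ref{pr:KKK}, invoke Proposition~\ref{pr:L3noFreeOverlattice} to force $L=L_3$, and conclude non-universality from Lemma~\ref{le:noLift}. Your closing remark about the free hypothesis ruling out non-free overlattices is also consistent with the paper's discussion following Proposition~\ref{pr:L3noFreeOverlattice}.
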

	\begin{proof}
		Thanks to Proposition~\ref{pr:KKK}, we can assume $\UKPlus=\UKctv$. Then, by Proposition~\ref{pr:L3noFreeOverlattice}, we see that the only option is $L=L_3$. However, $L_3$ is not universal according to Lemma~\ref{le:noLift}. 
	\end{proof}
	
	%--------------------------------
	\subsubsection{Summary for lattices of type LIRE}
	Now we combine all the results for lattices of type LIRE.
	
	\begin{proposition} \label{pr:noLIRE}
		Let $K\ni \sqrt2$ be a totally real quartic field. Then there is no universal ternary classical free lattice of type LIRE over $K$.
	\end{proposition}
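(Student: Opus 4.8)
The plan is to assemble the case-by-case results proved throughout this subsection. By the definition of type LIRE, any ternary classical free lattice $L$ of type LIRE over $K$ contains a sublattice $\OK \otimes L_0$, where $L_0$ is a universal ternary classical lattice over $\Q(\!\sqrt2)$. By \cite{CKR}, recalled in \eqref{eq:ListLattices}, such $L_0$ is isometric to one of $L_1$, $L_2$, $L_3$, $L_3'$, so $L$ contains (a copy over $K$ of) one of these four lattices as a sublattice. Moreover, I may assume throughout that $\UKPlus=\UKctv$: otherwise, by Proposition~\ref{pr:KKK}, the field $K$ admits no universal ternary classical lattice at all, and there is nothing to prove.

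For the first three lattices the conclusion is immediate, as $L$ is classical and free. If $L$ contains $L_1$, it is not universal by Proposition~\ref{pr:L1}; if it contains $L_2$, by Proposition~\ref{pr:L2}; and if it contains $L_3$, by Proposition~\ref{pr:L3}. Hence the only case requiring further attention is when $L$ contains $L_3'\simeq\qf{1}\perp\anglematrix{2-\sqrt2}1{3}$.

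To handle $L_3'$, I would invoke the symmetry already announced at the start of Subsection~\ref{ss:L3}: the nontrivial automorphism $\sqrt2\mapsto-\sqrt2$ of $\Q(\!\sqrt2)$ carries $L_3$ to $L_3'$ and the prime $p_7=5+3\sqrt2=(3-\sqrt2)(1+\sqrt2)^2$ to $\overline{p}_7=5-3\sqrt2=(3+\sqrt2)(1-\sqrt2)^2$. Every statement in Subsection~\ref{ss:L3} (Lemmas~\ref{le:DualBasis}, \ref{le:L0AltGenerators}, \ref{le:L0DualAltGenerators}, Proposition~\ref{pr:L0Summary}, Corollary~\ref{co:summary}, and Proposition~\ref{pr:L3noFreeOverlattice}) concerns only the $\Q(\!\sqrt2)$-structure of the binary part and the factorisation of $p_7$; each therefore has a verbatim analogue for $L_3'$ obtained by replacing $p_7$ with $\overline{p}_7$ and $3-\sqrt2$ with $3+\sqrt2$. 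In particular, the analogue of Proposition~\ref{pr:L3noFreeOverlattice} shows that, over a quartic field with $\UKPlus=\UKctv$, the lattice $L_3'$ has a proper classical free overlattice only if $K=\Q\bigl(\!\sqrt{3+\sqrt2}\bigr)$.

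It then remains to observe that $\Q\bigl(\!\sqrt{3+\sqrt2}\bigr)$ likewise admits no universal ternary classical lattice. The automorphism $\sqrt2\mapsto-\sqrt2$ extends to a field isomorphism $\Q\bigl(\!\sqrt{3-\sqrt2}\bigr)\to\Q\bigl(\!\sqrt{3+\sqrt2}\bigr)$, since it sends the minimal polynomial $x^2-(3-\sqrt2)$ to $x^2-(3+\sqrt2)$; as an abstract isomorphism of totally real fields it preserves total positivity, squares, and units, so the nonsquare totally positive unit exhibited in Example~\ref{ex:exceptionalField} maps to one in $\Q\bigl(\!\sqrt{3+\sqrt2}\bigr)$. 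Proposition~\ref{pr:KKK} then rules out a universal ternary classical lattice over this field as well, yielding the analogue of Proposition~\ref{pr:L3} for $L_3'$ and completing the last case. The substance of the argument is already contained in the preceding propositions; the only genuinely new point is the bookkeeping required to transfer the $L_3$ analysis to $L_3'$ through the base-field symmetry, which I expect to be the most delicate—though still essentially routine—step.
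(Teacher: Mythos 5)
Your proposal is correct and follows essentially the same route as the paper: the same reduction to the four sublattices from \eqref{eq:ListLattices}, the same use of Propositions~\ref{pr:L1}, \ref{pr:L2} and~\ref{pr:L3} for $L_1$, $L_2$, $L_3$, and the same $\sqrt2\mapsto-\sqrt2$ symmetry to dispose of $L_3'$. The only difference is one of economy: the paper handles $L_3'$ in a single stroke by transporting a hypothetical universal classical free overlattice of $L_3'$ along another embedding of $K$, obtaining a conjugate quartic field over which $L_3$ would have a universal classical free overlattice in contradiction with Proposition~\ref{pr:L3}, whereas you re-derive the whole chain of Subsection~\ref{ss:L3} (and the unit computation of Example~\ref{ex:exceptionalField}) under conjugation --- more bookkeeping, but equally valid.
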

	\begin{proof}
		Let $L$ be a universal ternary classical free lattice of type LIRE over $K$. First, note that $L$ must have a sublattice $L'$ isometric to one of the lattices listed in \eqref{eq:ListLattices}; without loss of generality, assume that $L'$ is equal to one of these lattices. But then $L$ cannot be universal: for $L_1$, this is contained in Proposition~\ref{pr:L1}, for $L_2$ in Proposition~\ref{pr:L2} and for $L_3$ in Proposition~\ref{pr:L3}. For $L_3'$, this follows from symmetry -- if $L_3'$ had some universal classical free overlattice over some field $K$, we could easily find a field (in fact just another embedding of $K$) over which $L_3$ would also have a universal classical free overlattice, which would be a contradiction.
	\end{proof}
	
	%-------------------------------------------------
	\subsection{Lattices of type non-LIRE} \label{ss:nonLIRE}
	
	Our main strategy in this subsection is selecting four particular elements and showing that any non-LIRE classical lattice that represents them has rank at least $4$. The question is which four elements should we choose. Inspired by the results from Krásenský--Romeo \cite{KR}, we consider the elements $1, 2+\sqrt2, 3,3(2+\sqrt2)$, which form something we may call a \emph{ternary criterion} -- every classical lattice over $\Q(\!\sqrt2)$ that represents all of them is either universal ternary, or at least quaternary. For technical reasons, it will be useful to replace the last element by its $\UKctv$-multiple $3(2-\sqrt2)$, i.e., we will use the following four elements:
	\[
	1, 2+\sqrt2, 3,3(2-\sqrt2).
	\]
	Although we are interested in lattices not over $\Q(\!\sqrt2)$ but over its extensions, these four elements still prove to be a good choice for most fields. In particular, we will distinguish two cases depending on whether there exists a certain kind of \enquote{small} elements in $K$ or not -- and we will use the above-listed four elements in the latter case.
	
	%------------
	\subsubsection{When there are no small elements}
	Here we study the fields where all integral elements $\omega$ satisfying $\omega^2 \preceq 6$ or $\omega^2 \preceq 3(2+\sqrt2)$ lie in $\Q(\!\sqrt2)$; this condition will later be denoted by \eqref{eq:SmallCondition}. We do not restrict to quartic fields; the main result, Proposition~\ref{pr:nonLIRE-5lambda}, holds for fields of all degrees.

	We start by listing some \enquote{small squares} in $\Q(\!\sqrt2)$.
	
	\begin{lemma} \label{le:smallsquares}
		Let $\omega \in \OK[\Q(\!\sqrt2)] = \Z[\sqrt2]$. Then:
		\begin{enumerate}
			\item If $\omega^2 \preceq 2+\sqrt2$, then $\omega = 0$. \label{le:smallsquares-a}
			\item If $\omega^2 \preceq 3$, then $\omega \in \{0,\pm1,\pm\sqrt2\}$. \label{le:smallsquares-b}
			\item If $\omega^2 \preceq 6$, then $\omega \in \{0, \pm1, \pm2, \pm\sqrt2, \pm(1+\sqrt2),\pm(1-\sqrt2)\}$. \label{le:smallsquares-c}
			\item If $\omega^2 \preceq 3(2+\sqrt2)$, then $\omega \in \{0, \pm1, \pm(1+\sqrt2)\}$. \label{le:smallsquares-d}
			\item If $\omega^2 \preceq 3(2-\sqrt2)$, then $\omega \in \{0, \pm1, \pm(1-\sqrt2)\}$. \label{le:smallsquares-e}
		\end{enumerate}
	\end{lemma}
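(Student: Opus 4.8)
The plan is to turn each of the five statements into a finite search over $(a,b)\in\Z^2$. Write $\omega=a+b\sqrt2$ and let $\sigma_1,\sigma_2\colon\Q(\!\sqrt2)\hookrightarrow\R$ be the two embeddings, with $\sigma_1(\sqrt2)=\sqrt2$ and $\sigma_2(\sqrt2)=-\sqrt2$. The first thing I would record is that for any $\gamma\in\OK[\Q(\!\sqrt2)]$ the relation $\omega^2\preceq\gamma$ is equivalent to the two numerical inequalities $\sigma_i(\omega)^2\le\sigma_i(\gamma)$ for $i=1,2$. Indeed, if equality held in just one embedding, say $\sigma_1(\omega^2)=\sigma_1(\gamma)$, then injectivity of $\sigma_1$ would force $\omega^2=\gamma$ and hence equality in both embeddings; thus the convention \enquote{$\prec$ or $=$} used to define $\preceq$ agrees here with the pointwise order, and no boundary phenomena can occur. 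This reduction is used uniformly in all five parts.

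Since $\sigma_1(\omega)=a+b\sqrt2$ and $\sigma_2(\omega)=a-b\sqrt2$, the two inequalities read $\abs{a+b\sqrt2}\le\sqrt{\sigma_1(\gamma)}$ and $\abs{a-b\sqrt2}\le\sqrt{\sigma_2(\gamma)}$. Adding and subtracting the bounds on $\sigma_1(\omega)$ and $\sigma_2(\omega)$ gives
\[
2\abs{a}\le\sqrt{\sigma_1(\gamma)}+\sqrt{\sigma_2(\gamma)},\qquad 2\sqrt2\,\abs{b}\le\sqrt{\sigma_1(\gamma)}+\sqrt{\sigma_2(\gamma)},
\]
which confines $(a,b)$ to a small box. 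Substituting the five values of $\gamma$ yields, for instance, $b=0$ and $\abs a\le1$ for $\gamma=2+\sqrt2$; $\abs a\le1$, $\abs b\le1$ for $\gamma=3$; and $\abs a\le2$, $\abs b\le1$ for each of $\gamma=6$, $\gamma=3(2+\sqrt2)$, and $\gamma=3(2-\sqrt2)$.

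It then remains to test each lattice point in the box against the pair $\sigma_i(\omega)^2\le\sigma_i(\gamma)$ and to retain exactly those that pass; this is a direct check over at most fifteen candidates per case and reproduces the listed sets, with the $\pm$ signs coming for free from $\omega\mapsto-\omega$. I would also avoid redoing part~\ref{le:smallsquares-e}: applying the nontrivial automorphism $\sigma_2$ of $\Q(\!\sqrt2)$, which fixes $\Z[\sqrt2]$ and interchanges the two embeddings, sends $\omega^2\preceq3(2+\sqrt2)$ to $\sigma_2(\omega)^2\preceq3(2-\sqrt2)$, so \ref{le:smallsquares-e} is simply the conjugate of \ref{le:smallsquares-d}. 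There is no genuine obstacle beyond organised bookkeeping; the only mild subtlety is the definition of $\preceq$, dealt with by the injectivity remark above, and one should pay attention to the candidates nearest the boundary---notably $\omega=1+\sqrt2$ for $\gamma=6$, where $\sigma_1(\omega)^2=3+2\sqrt2$ sits just below $6$---to be sure they are correctly kept rather than discarded.
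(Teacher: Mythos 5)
Your proof is correct and takes essentially the same route as the paper, whose proof simply declares the lemma ``a short and straightforward computation'' (noting, as you do, that \ref{le:smallsquares-d} and \ref{le:smallsquares-e} are conjugate); your contribution is to organise that computation explicitly via the pointwise-order reduction and the triangle-inequality box bounds, and all your bounds and case checks are accurate.
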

	\begin{proof}
		This is a short and straightforward computation. Also, note that \ref{le:smallsquares-a} follows directly from the fact that $2+\sqrt2$ is a nonsquare indecomposable, that \ref{le:smallsquares-b} is a corollary of \ref{le:smallsquares-c}, and that \ref{le:smallsquares-d} and \ref{le:smallsquares-e} are equivalent.
	\end{proof}
	
	For the rest of the section, we use the notation $\lambda=2+\sqrt2$.
	
	\begin{lemma} \label{le:3x3}
		Let $K\ni \sqrt2$ be a totally real number field in which the following condition is satisfied for every $\omega\in\OK$:
		\begin{equation}\label{eq:SmallConditionWeak}
			\omega^2\preceq 3\lambda\ \text{ or }\ \omega^2 \preceq 3 \quad \Longrightarrow \quad \omega \in \Q(\!\sqrt2).
		\end{equation}
		Let $L = \OK \vv_1 + \OK \vv_2 + \OK \vv_3$ be a classical lattice such that $Q(\vv_1)=1$, $Q(\vv_2)=\lambda$, $Q(\vv_3)=3$. Then $L$ is either isometric to one of the lattices $L_1$, $L_2$, $L_3$, $L_3'$ from \eqref{eq:ListLattices}, or $L\simeq \qf{1,\lambda,2}$ or $L\simeq \qf{1,\lambda,3}$.
	\end{lemma}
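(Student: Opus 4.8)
The plan is to read off the constraints that the hypotheses impose on the Gram matrix of $L$ and then classify the finitely many survivors up to isometry. Write $\lambda=2+\sqrt2$ and set $a=B_Q(\vv_1,\vv_2)$, $b=B_Q(\vv_1,\vv_3)$, $c=B_Q(\vv_2,\vv_3)$; these lie in $\OK$ because $L$ is classical, so the Gram matrix is
\[
G=\begin{pmatrix} 1 & a & b \\ a & \lambda & c \\ b & c & 3 \end{pmatrix}.
\]
By the Cauchy--Schwarz inequality, $a^2\preceq\lambda$, $b^2\preceq 3$ and $c^2\preceq 3\lambda$. Since $\lambda\prec 3\lambda$, each of the three squares is $\preceq 3\lambda$ or $\preceq 3$, so condition \eqref{eq:SmallConditionWeak} forces $a,b,c\in\Q(\!\sqrt2)\cap\OK=\Z[\sqrt2]$. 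Lemma~\ref{le:smallsquares} now pins them down: part~\ref{le:smallsquares-a} gives $a=0$ (as $a^2\preceq\lambda$), part~\ref{le:smallsquares-b} gives $b\in\{0,\pm1,\pm\sqrt2\}$, and part~\ref{le:smallsquares-d} gives $c\in\{0,\pm1,\pm(1+\sqrt2)\}$.

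Second, because $a=0$ and $Q(\vv_1)=1$ is a unit, I would orthogonalise $\vv_3$ against $\vv_1$: replacing $\vv_3$ by $\vv_3-b\vv_1$ is a unimodular (hence isometric) change of basis, and it yields $L\simeq\qf{1}\perp\anglematrix{\lambda}{c}{3-b^2}$. Negating $\vv_2$ lets me assume $c\in\{0,1,1+\sqrt2\}$, and $m:=3-b^2\in\{1,2,3\}$ depends only on $b^2$. This reduces everything to nine pairs $(m,c)$. Positive definiteness requires $\det=\lambda m-c^2\succ0$, which rules out exactly $(m,c)=(1,1)$ and $(1,1+\sqrt2)$, where the determinant equals $1+\sqrt2$ resp. $-1-\sqrt2$ and is not totally positive. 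Seven cases remain.

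Third, I would match the seven survivors with the lattices in \eqref{eq:ListLattices}. The three diagonal cases $c=0$ give $\qf{1,1,\lambda}=L_1$, $\qf{1,\lambda,2}$ and $\qf{1,\lambda,3}$ directly; the cases $(m,c)=(3,1),(3,1+\sqrt2)$ should be $L_3,L_3'$ and $(m,c)=(2,1),(2,1+\sqrt2)$ should both be $L_2$. Verifying these last isometries over $\Z[\sqrt2]$ is the step I expect to be the real obstacle, because the determinants agree only modulo $\UKctv$ (for example $\det\anglematrix{\lambda}{1+\sqrt2}{3}=3+\sqrt2=(5-3\sqrt2)(1+\sqrt2)^2\equiv\det L_3'\pmod{\UKctv}$). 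The decisive tool is that $1+\sqrt2$ and $\sqrt2-1$ are units swapping $\lambda$ and $2-\sqrt2$ up to a square, since $(\sqrt2-1)^2\lambda=2-\sqrt2$; rescaling a basis vector by such a unit, together with elementary moves $\vu\mapsto\vu-\vv$, produces explicit unimodular transformations. Concretely, for $L_3'$ one checks that in $\anglematrix{2-\sqrt2}{1}{3}$ the vectors $(1+\sqrt2)\vz_1$ and $\vz_2-\vz_1$ form a basis with Gram matrix $\anglematrix{\lambda}{1}{3-\sqrt2}$, which is the image of $\anglematrix{\lambda}{1+\sqrt2}{3}$ under $\vu_2\mapsto\vu_2-\vu_1$; and for $L_2$ one uses that $(\sqrt2-1)(\vu_1-\vu_2)$ has norm $2-\sqrt2$ inside $\anglematrix{\lambda}{1}{2}$, while $\anglematrix{\lambda}{1+\sqrt2}{2}\simeq\anglematrix{\lambda}{1}{2-\sqrt2}$ via $\vu_2\mapsto\vu_2-\vu_1$. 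Assembling these identifications exhausts the nine cases and completes the classification.
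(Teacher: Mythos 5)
Your proposal is correct and follows essentially the same route as the paper: Gram matrix of the three given vectors, Cauchy--Schwarz plus condition \eqref{eq:SmallConditionWeak} to force the off-diagonal entries into $\Z[\sqrt2]$, Lemma~\ref{le:smallsquares} to pin them down, and then a $3\cdot3$ case check. The only difference is that the paper leaves the case check implicit (``by checking all possible combinations''), whereas you carry it out explicitly, including the unimodular changes of basis identifying $\anglematrix{\lambda}{1}{2}$, $\anglematrix{\lambda}{1+\sqrt2}{2}$ with $L_2$ and $\anglematrix{\lambda}{1+\sqrt2}{3}$ with $L_3'$ -- all of which check out.
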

	
	Note that the Lemma does \emph{not} assume $\vv_1, \vv_2, \vv_3$ to be linearly independent; but as a corollary, we see that in fact they must be.
	
	\begin{proof}
		Consider the Gram matrix of $\vv_1$, $\vv_2$, $\vv_3$:
		\[
		G = \begin{pmatrix}
			1 & \omega_{12} & \omega_{13}\\
			\omega_{12} & \lambda & \omega_{23}\\
			\omega_{13} & \omega_{23} & 3\\
		\end{pmatrix}.
		\]
		By Cauchy--Schwarz inequality, $\omega_{12}^2\preceq \lambda$, $\omega_{13}^2\preceq 3$ and $\omega_{23}^2 \preceq 3\lambda$. Since $\lambda \preceq 3\lambda$, our assumption \eqref{eq:SmallConditionWeak} on $K$ yields that $\omega_{12},\omega_{13},\omega_{23} \in \Q(\!\sqrt2)$. Thus, Lemma~\ref{le:smallsquares} yields $\omega_{12}=0$, $\omega_{13} \in \{0,\pm1,\pm\sqrt2\}$ and $\omega_{23} \in \{0, \pm1, \pm(1+\sqrt2)\}$.
		
		By checking all possible combinations of $\omega_{13}$ and $\omega_{23}$, we verify the statement. (We can reduce the amount of work if we get rid of the $\pm$ by replacing $\vv_1$ by $-\vv_1$ and $\vv_2$ by $-\vv_2$ if needed; that only leaves $3 \cdot 3$ cases to consider.)
	\end{proof}
	
	By the previous lemma, if $L$ is a classical lattice representing $1$, $\lambda$ and $3$ of type non-LIRE, then (under certain assumptions) it contains either $\qf{1,\lambda,2}$ or $\qf{1,\lambda,3}$. We study these two possibilities separately.
	
	In the following, we will use a somewhat stronger version of condition \eqref{eq:SmallConditionWeak}:
	\begin{equation} \label{eq:SmallCondition}
		\forall\,\omega\in\OK: \quad \omega^2\preceq 3\lambda\ \text{ or }\ \omega^2 \preceq 6 \quad \Longrightarrow \quad \omega \in \Q(\!\sqrt2).
		\tag{{\smaller[1]{\raisebox{-2pt}{\AsteriskRoundedEnds}}}}
	\end{equation}
	
	\begin{lemma} \label{le:diag2}
		Let $K \ni \sqrt2$ be a totally real number field satisfying \eqref{eq:SmallCondition}. Let $L$ be a universal ternary classical lattice over $K$ which is of type non-LIRE. If $L$ contains a sublattice isometric to ${\qf{1,\lambda,2}}$, then $\sqrt{5\lambda}\in K$.
	\end{lemma}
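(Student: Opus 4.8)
The plan is to use universality to represent the fourth criterion element $3(2-\sqrt2)$, and to show that it is precisely the failure of $\qf{1,\lambda,2}$ to represent $3(2-\sqrt2)$ over $\Q(\!\sqrt2)$ that forces $\sqrt{5\lambda}$ into $K$. First I would fix an orthogonal basis $\vv_1,\vv_2,\vv_3$ of the sublattice $\qf{1,\lambda,2}$ with $Q(\vv_1)=1$, $Q(\vv_2)=\lambda$, $Q(\vv_3)=2$. Universality provides a vector $\vw=a\vv_1+b\vv_2+c\vv_3\in L$ with $Q(\vw)=3(2-\sqrt2)$, and since $L$ is classical, the numbers $a=B_Q(\vw,\vv_1)$, $\lambda b=B_Q(\vw,\vv_2)$ and $2c=B_Q(\vw,\vv_3)$ all lie in $\OK$. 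Writing $m=\lambda b$ and $n=2c$ and multiplying $Q(\vw)=a^2+\lambda b^2+2c^2=3(2-\sqrt2)$ by $2\lambda$ (using $\lambda(2-\sqrt2)=2$) turns the representation into the clean identity
\[
2\lambda a^2+2m^2+\lambda n^2=12,\qquad a,m,n\in\OK.
\]

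Positivity of the three summands gives $m^2\preceq6$, so \eqref{eq:SmallCondition} forces $m\in\Q(\!\sqrt2)$, and Lemma~\ref{le:smallsquares}\ref{le:smallsquares-c} leaves only the finitely many values $m\in\{0,\pm1,\pm2,\pm\sqrt2,\pm(1\pm\sqrt2)\}$. For each of them $\gamma:=(12-2m^2)/\lambda=2a^2+n^2$ lies in $\Q(\!\sqrt2)$; moreover $a$ and $n$ cannot both lie in $\Q(\!\sqrt2)$, for otherwise $\vw$ would exhibit a representation of $3(2-\sqrt2)$ by $\qf{1,\lambda,2}$ over $\Q(\!\sqrt2)$, which a short finite check (inspecting the coefficient of $\sqrt2$ after bounding the admissible values of $b$) rules out. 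The main device for exploiting \eqref{eq:SmallCondition} is to multiply a coordinate by a totally positive unit $\eta$ of $\Q(\!\sqrt2)$: from $a^2\preceq\gamma/2$ one gets $(\eta a)^2\preceq\eta^2\gamma/2$, so as soon as some $\eta$ makes $\eta^2\gamma/2\preceq3\lambda$ or $\preceq6$, condition \eqref{eq:SmallCondition} forces $\eta a$, hence $a$, into $\Q(\!\sqrt2)$; the analogous statement with the bound $n^2\preceq\gamma$ controls $n$.

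Carrying this out, one checks that unless $\gamma$ lies in the square class of $5\lambda$ the configuration is incompatible with \eqref{eq:SmallCondition}: either both $a$ and $n$ are forced into $\Q(\!\sqrt2)$, contradicting the previous paragraph, or the quadratic subextension generated is $\Q(\!\sqrt2)(\sqrt\lambda)$ or $\Q(\!\sqrt2)(\sqrt{3\lambda})$, whose generator has square $\preceq3\lambda$ and hence again violates \eqref{eq:SmallCondition}. This pins the representing vector to the class $\gamma\equiv5(2-\sqrt2)$, i.e.\ $m=\pm1$. There $a$ is forced into $\Q(\!\sqrt2)$ while $n$ is not, so $n=\sqrt{\gamma-2a^2}$ generates a quadratic subextension of $K$; running over the finitely many admissible $a$ and computing the class of $\gamma-2a^2$, condition \eqref{eq:SmallCondition} eliminates every value except $a=0$ (each nonzero candidate yields $\gamma-2a^2$ in the class of $4\pm\sqrt2$, so $\sqrt{\gamma-2a^2}$ would be a non-rational integer of $K$ with square $\preceq6$). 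This leaves $n^2=\gamma=5(2-\sqrt2)$, and since $5(2-\sqrt2)=(\sqrt2-1)^2\cdot5\lambda$ differs from $5\lambda$ by a square in $\Q(\!\sqrt2)$, we conclude $\sqrt{5\lambda}\in K$.

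The heart of the argument — and its main obstacle — is the bookkeeping that shows \eqref{eq:SmallCondition} annihilates every competing square class: the classes of $\lambda$ and $3\lambda$ are killed because $\sqrt\lambda$ and $\sqrt{3\lambda}$ are non-rational algebraic integers whose squares are $\preceq3\lambda$, whereas the class of $5\lambda$ survives precisely because no totally positive unit multiple of $5\lambda$ is $\preceq3\lambda$ or $\preceq6$. I expect the delicate point to be the residual configurations (such as $m=0$ with a nonzero forced coordinate) that a single representation does not by itself exclude; these must be disposed of by feeding in the representability of the remaining criterion values $1,\lambda,3$ guaranteed by universality, so that $5\lambda$ remains the unique square class compatible with both \eqref{eq:SmallCondition} and universality.
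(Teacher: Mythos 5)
Your setup and most of your machinery coincide with the paper's proof: your identity $2\lambda a^2+2m^2+\lambda n^2=12$ for $a=B_Q(\vw,\vv_1)$, $m=B_Q(\vw,\vv_2)$, $n=B_Q(\vw,\vv_3)$ is exactly the paper's condition $\det G=0$ for the Gram matrix of the four vectors, and your use of \eqref{eq:SmallCondition} plus the unit trick (multiplying by $(1+\sqrt2)$ so that the bound $3(2-\sqrt2)$ becomes $3\lambda$) to force $a,m\in\Q(\!\sqrt2)$ is the same as in the paper. However, there is a genuine gap at the step where you dismiss the case $a,n\in\Q(\!\sqrt2)$. You claim this would yield a representation of $3(2-\sqrt2)$ by $\qf{1,\lambda,2}$ over $\Q(\!\sqrt2)$, ruled out by a finite check. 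That finite check does succeed for the form itself, but it is not the relevant statement: integrality of $a$, $m=\lambda b$ and $n=2c$ does \emph{not} make the coordinates $b,c$ integral, so all you know is that $\vw$ lies in the dual lattice $\dual{\qf{1,\lambda,2}}$ over $\Z[\sqrt2]$ --- and that dual \emph{does} represent $3(2-\sqrt2)$. Concretely, $(a,m,n)=(1,\sqrt2,2-\sqrt2)$, i.e.\ $(a,b,c)=\bigl(1,\sqrt2-1,1-\tfrac{\sqrt2}{2}\bigr)$, satisfies $2\lambda a^2+2m^2+\lambda n^2=12$, as does $(a,m,n)=(1-\sqrt2,\sqrt2,\sqrt2)$; no arithmetic computation over $\Q(\!\sqrt2)$ can eliminate these configurations.

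These two configurations are precisely where the non-LIRE hypothesis must enter, and your argument never uses it. The paper shows that in exactly these cases the lattice generated by $\vv_1,\vv_2,\vv_3,\vw$ is isometric to $\OK\otimes L_1$ with $L_1\simeq\qf{1,1,\lambda}$ universal over $\Q(\!\sqrt2)$, so $L$ would be of type LIRE, a contradiction. This hypothesis is not a technicality: over $K=\Q(\!\sqrt2)$ itself, which satisfies \eqref{eq:SmallCondition}, the universal lattice $L_1$ contains $\qf{1,\lambda,2}$ as a sublattice (take $\e_1$, $\e_3$, $\sqrt2\,\e_2$), yet $\sqrt{5\lambda}\notin\Q(\!\sqrt2)$; so the statement is false without non-LIRE, and any proof omitting it must break down somewhere --- yours breaks down at exactly this step. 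Relatedly, your closing suggestion to dispose of residual configurations by invoking representability of $1,\lambda,3$ is not what is needed (and is not how the paper proceeds); once the two dual-lattice configurations are excluded via non-LIRE, the remaining square-class bookkeeping (killing $\lambda$, $3\lambda$, $3\pm\sqrt2$, $4\pm\sqrt2$ by \eqref{eq:SmallCondition}, leaving only $5\lambda$) goes through as you describe and as in the paper.
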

	\begin{proof}
		Consider the Gram matrix corresponding to the three basis vectors of $\qf{1,\lambda,2}$ and the vector representing $3(2-\sqrt2)$:
		\begin{equation}\label{eq:Gram}
			G = \begin{pmatrix}
				1 & 0 & 0 & \alpha \\
				0 & \lambda & 0 & \beta \\
				0 & 0 & 2 & x \\
				\alpha & \beta & x & 3(2-\sqrt2)
			\end{pmatrix}.
		\end{equation}
		By Cauchy--Schwarz inequality, $\alpha^2 \preceq 3(2-\sqrt2)$ and $\beta^2 \preceq \lambda3(2-\sqrt2) = 6$. By assumption, $\beta \in \Q(\!\sqrt2)$. Furthermore, $\alpha^2(1+\sqrt2)^2 \preceq 3(2-\sqrt2)(1+\sqrt2)^2 = 3\lambda$, hence by assumption, $\alpha(1+\sqrt2) \in \Q(\!\sqrt2)$, so we have $\alpha\in\Q(\!\sqrt2)$ as well.
		
		Thus $\alpha \in \{0, \pm1, \pm(1-\sqrt2)\}$ and $\beta \in \{0, \pm1, \pm2, \pm\sqrt2, \pm(1+\sqrt2),\pm(1-\sqrt2) \}$ by parts \ref{le:smallsquares-e} and \ref{le:smallsquares-c} of Lemma~\ref{le:smallsquares}, respectively.
		
		Now, remember that $L$ is a ternary lattice. Thus, $\rank G \leq 3$; in particular, $\det G = 0$. A direct computation yields:
		\[
		\det G = -\lambda x^2 + 2(6-\lambda\alpha^2-\beta^2),
		\]
		so the condition $\det G = 0$ can be rewritten as
		\[
		x^2 = (2-\sqrt2) (6-\lambda\alpha^2-\beta^2).
		\]
		We already know that there are only three possibilities for $\alpha^2$ and six for $\beta^2$. It remains to check all $3\cdot 6$ combinations and compute the corresponding $x$. (Typically, this $x$ will be an algebraic integer of degree $4$.) Since $x^2$ must be totally positive, we can ignore those pairs where $6-\lambda\alpha^2-\beta^2$ is not totally positive. 
		
		First, we handle separately the two cases which yield $x\in\Q(\!\sqrt2)$: If $\alpha=\pm1$, $\beta=\pm\sqrt2$, we get $x^2 = 6-4\sqrt2$, so $x = \pm (2-\sqrt2)$. With this choice of $\alpha, \beta, x$, the Gram matrix is indeed totally positive semidefinite of rank $3$; however, a computation shows that the corresponding vectors generate a lattice isometric to $\qf{1,1,\lambda}\simeq L_1$, which is impossible, as $L$ is of type non-LIRE. Similarly, $\alpha=\pm(1-\sqrt2)$, $\beta=\pm\sqrt2$ yields $x^2 = 2$, so $x = \pm \sqrt2$. This is also impossible, because the vectors again generate a lattice isometric to $L_1$.
		
		The remaining pairs $\alpha^2,\beta^2$ yield that $\det G = 0$ only if 
		\[
		x^2\in\{12-6\sqrt2, 10-5\sqrt2, 4-2\sqrt2, 8-4\sqrt2, 10-7\sqrt2,\lambda,10-6\sqrt2,8-5\sqrt2,6-2\sqrt2, 4-\sqrt2\}.
		\] 
		Note that $12-6\sqrt2 = \sqrt2^23(2-\sqrt2)$ is a square if and only if $3(2-\sqrt2)$ is a square, hence if and only if $3\lambda$ is a square. Applying analogous square reductions for all the numbers (for example $10-7\sqrt2=(1-\sqrt2)^4\lambda$) yields that if $\det G = 0$, then at least one of the following must be a square:
		\[
		3\lambda,\ 5\lambda,\ \lambda,\ 3+\sqrt2,\ 4+\sqrt2,\ 3-\sqrt2,\ 4-\sqrt2.
		\]
		However, most of these numbers cannot be a square in a field which satisfies condition \eqref{eq:SmallCondition}: for example, if $x^2=3\lambda$, then clearly $x^2\preceq3\lambda$, so \eqref{eq:SmallCondition} yields $x\in\Q(\!\sqrt2)$, which is not true. Therefore, we are left with only one possibility, namely that $5\lambda$ is a square in $K$. 
	\end{proof}

	\begin{lemma} \label{le:diag3}
		Let $K \ni \sqrt2$ be a totally real number field satisfying \eqref{eq:SmallCondition}. Let $L$ be a ternary classical lattice over $K$ and suppose that $L$ contains a sublattice isometric to ${\qf{1,\lambda,3}}$. Then $L$ is not universal.  
	\end{lemma}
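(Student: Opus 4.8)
The plan is to follow the template of Lemma~\ref{le:diag2}. Suppose for contradiction that $L$ is universal; then it represents $3(2-\sqrt2)$ by some $\vv_4\in L$. Let $\vv_1,\vv_2,\vv_3$ be the orthogonal basis of the sublattice $\qf{1,\lambda,3}$ and form the Gram matrix $G$ of $\vv_1,\vv_2,\vv_3,\vv_4$; writing $\alpha=B_Q(\vv_1,\vv_4)$, $\beta=B_Q(\vv_2,\vv_4)$, $\gamma=B_Q(\vv_3,\vv_4)$ (all in $\OK$, since $L$ is classical), this is the arrowhead matrix with diagonal $(1,\lambda,3,3(2-\sqrt2))$. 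As $L$ is ternary, $\rank G\le3$, so $\det G=0$, which expands to the single relation
\[
\gamma^2=9(2-\sqrt2)-3\alpha^2-\tfrac{3}{\lambda}\beta^2 .
\]

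First I would constrain $\alpha$ and $\beta$. The Cauchy--Schwarz inequality gives $\alpha^2\preceq3(2-\sqrt2)$, $\beta^2\preceq\lambda\cdot3(2-\sqrt2)=6$ (using $\lambda(2-\sqrt2)=2$) and $\gamma^2\preceq9(2-\sqrt2)$. Since $\alpha^2(1+\sqrt2)^2\preceq3(2-\sqrt2)(1+\sqrt2)^2=3\lambda$, condition \eqref{eq:SmallCondition} forces $\alpha(1+\sqrt2)\in\Q(\!\sqrt2)$ and hence $\alpha\in\Q(\!\sqrt2)$; likewise $\beta\in\Q(\!\sqrt2)$. By Lemma~\ref{le:smallsquares} this leaves $\alpha\in\{0,\pm1,\pm(1-\sqrt2)\}$ and $\beta\in\{0,\pm1,\pm2,\pm\sqrt2,\pm(1\pm\sqrt2)\}$, while the bound on $\gamma^2$ lies below neither $6$ nor $3\lambda$, so $\gamma$ remains the free variable. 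A further restriction comes from integrality: $\gamma^2\in\OK\cap\Q(\!\sqrt2)=\Z[\sqrt2]$, so $\tfrac{3}{\lambda}\beta^2=\tfrac32(2-\sqrt2)\beta^2$ must lie in $\Z[\sqrt2]$, which eliminates $\beta\in\{\pm1,\pm(1\pm\sqrt2)\}$ and leaves only $\beta\in\{0,\pm\sqrt2,\pm2\}$.

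This reduces everything to nine pairs $(\alpha^2,\beta^2)$. I would discard those making $\gamma^2$ not totally positive, and then for each remaining pair strip squares out of $\Q(\!\sqrt2)$ -- crucially also totally positive unit squares, so as to balance the two real places -- and feed the resulting radical into \eqref{eq:SmallCondition}: whenever the balanced squarefree part is $\preceq6$ or $\preceq3\lambda$, its square root is forced into $\Q(\!\sqrt2)$, a contradiction. This kills the pairs $(0,0),(0,2),(0,4),(1,2),(3-2\sqrt2,2)$, in each of which $\gamma^2$ reduces modulo squares to one of the small nonsquares $\lambda,3\lambda,3$.

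The real obstacle is the two surviving pairs, where $\alpha$ is a unit and $\beta=0$, giving $\gamma^2\equiv15-9\sqrt2$ (resp. its conjugate $9-3\sqrt2$); their norm $63$ is too large for \eqref{eq:SmallCondition} to bite. Here $\vv_4\perp\vv_2$, so $L$ splits as $\qf\lambda\perp M$ with $M$ binary containing $\qf{1,3}$, and $\vv_4-\alpha\vv_1$ is a multiple of $\vv_3$ of norm $5-3\sqrt2$ (resp. $3-\sqrt2$). From $(\gamma)^2=(3)(5-3\sqrt2)$ and the inertness of $3$ in $\Q(\!\sqrt2)$ one checks that $3$ ramifies in $K$ and that the ideal $\ia$ appearing in a pseudobasis of $M$ satisfies $\ia^2=\tfrac13\OK$. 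Since $L$ is universal, Proposition~\ref{pr:KKK} gives $\UKPlus=\UKctv$; thus if $\ia$ were principal we would obtain $\sqrt3\in K$, against \eqref{eq:SmallCondition}, so $M$ (and hence $L$) is non-free. I expect the main difficulty to be ruling out this residual non-free lattice $\qf\lambda\perp M$: my plan is to dispose of it by a direct non-representation argument, falling back -- should it pin $K$ down to one of finitely many fields -- on the special analysis of Subsection~\ref{ss:Exceptional}.
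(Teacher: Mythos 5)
Your computational skeleton is exactly the paper's: the same arrowhead Gram matrix, the same Cauchy--Schwarz bounds, the same trick of multiplying $\alpha$ by $(1+\sqrt2)$ to force $\alpha\in\Q(\!\sqrt2)$, the same integrality argument eliminating $\beta^2\in\{1,3\pm2\sqrt2\}$, and the same list of seven candidate values for $\gamma^2$, of which all but $15-9\sqrt2$ and $9-3\sqrt2$ (square-equivalent to $3(3+\sqrt2)$ and $3(3-\sqrt2)$) are killed by \eqref{eq:SmallCondition}. Up to that point your argument is correct and coincides with the paper's proof.

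The genuine gap is in the two surviving cases, i.e., when $3(3\pm\sqrt2)$ is a square in $K$, and you concede it yourself: what you offer there is a plan, not a proof. Three concrete problems. First, the splitting $L\simeq\qf{\lambda}\perp M$ does not follow from $\vv_4\perp\vv_2$: that orthogonality only splits the sublattice $\OK\vv_1+\OK\vv_2+\OK\vv_3+\OK\vv_4$, while $L$ may contain further vectors whose $\vv_2$-component lies in $\frac1\lambda\OK$; since $\lambda$ is not a unit, Lemma~\ref{le:UnitSplits} does not apply, so your subsequent pseudobasis computation for $M$ rests on an unproved claim. Second, even a complete proof that $L$ must be non-free would conclude nothing, because the lemma is stated for arbitrary ternary classical lattices, not only free ones. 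Third, the fallback to Subsection~\ref{ss:Exceptional} cannot work: that subsection treats five specific quartic fields, none of which is $\Q\bigl(\!\sqrt{3(3\pm\sqrt2)}\bigr)$, and in the present lemma $K$ may have arbitrary degree, so no finite list of fields is available a priori. The idea you are missing -- and the paper's actual finish -- is a unit-group argument: if $3(3\pm\sqrt2)=\square$ in $K$, then $K$ contains $F=\Q\bigl(\!\sqrt{3(3\pm\sqrt2)}\bigr)$, generated by a root of $\its^4-18\its^2+63$, and LMFDB gives $\abs{\UKPlus[F]/\UKctv[F]}=h^+(F)/h(F)=4$; by Lemma~\ref{le:unitsDontDissappear} this forces $\abs{\UKPlus/\UKctv}>1$, and then Proposition~\ref{pr:KKK} excludes any universal ternary classical lattice over $K$, closing both cases at once. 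Ironically, you do invoke Proposition~\ref{pr:KKK}, but only to get $\UKPlus=\UKctv$ for your ideal computation; applied instead to the subfield $F$ via Lemma~\ref{le:unitsDontDissappear}, it ends the proof immediately.
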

	\begin{proof}
		The proof is analogous to that of Lemma~\ref{le:diag2}. We consider the Gram matrix
		\[
		G = \begin{pmatrix}
			1 & 0 & 0 & \alpha \\
			0 & \lambda & 0 & \beta \\
			0 & 0 & 3 & x \\
			\alpha & \beta & x & 3(2-\sqrt2)
		\end{pmatrix}.
		\]
		We get the same conditions for $\alpha$ and $\beta$ as before; thus, again, $\alpha \in \{0, \pm1, \pm(1-\sqrt2)\}$ and $\beta \in \{0, \pm1, \pm2, \pm\sqrt2, \pm(1+\sqrt2),\pm(1-\sqrt2) \}$.
		
		This time we get
		\[
		\det G = -\lambda x^2 + 3(6-\lambda\alpha^2-\beta^2),
		\]
		so $\det G = 0$ can be rewritten as
		\[
		x^2 = \frac{3}{\lambda} (6-\lambda\alpha^2-\beta^2).
		\]
		This expression is totally positive precisely for the same pairs $\alpha^2,\beta^2$ as in the previous proof; but this time, we can exclude even more cases, based on the condition $x^2 \in \OK$. This excludes all the cases where the expression on the right side is not an algebraic integer; thus, $\beta^2 \notin \{1^2,(1+\sqrt2)^2,(1-\sqrt2)^2\}$.
		
		The remaining pairs $\alpha^2,\beta^2$ yield that $\det G = 0$ only if
		\[
		x^2\in\{18-9\sqrt2, 6-3\sqrt2, 12-6\sqrt2,15-9\sqrt2,9-6\sqrt2, 9-3\sqrt2, 3\}.
		\] 
		One of these expressions is a square in $K$ if and only one of the following is a square: 
		\[
		\lambda,\ 3\lambda,\ 3(3+\sqrt2),\ 3,\ 3(3-\sqrt2).
		\]
		This time, only the numbers $3(3\pm\sqrt2)$ can be a square in a field fulfilling condition \eqref{eq:SmallCondition}.
		
		Assume that $K\ni\sqrt{\vphantom{\sqrt2}\smash{3(3+\sqrt2)}}$, and consider its subfield $F=\Q\bigl(\!\sqrt{\vphantom{\sqrt2}\smash{3(3+\sqrt2)}}\bigr)$, generated by a root of $\its^4-18\its^2+63$. Looking up this field in the database LMFDB \cite{LMFDB}, we can see that $\abs{\UKPlus[F]/\UKctv[F]}=h^{+}(F)/h(F)=4$. By Lemma~\ref{le:unitsDontDissappear}, we have $\abs{\UKPlus/\UKctv} \geq \abs{\UKPlus[F]/\UKctv[F]}$. Therefore, the lattice $L$ cannot be universal over such field $K$ by Proposition~\ref{pr:KKK}. The same argument applies for $\sqrt{\vphantom{\sqrt2}\smash{3(3-\sqrt2)}}$, since it is also a root of $\its^4-18\its^2+63$.
	\end{proof}
	
	Finally, we put everything together.
	
	\begin{proposition} \label{pr:nonLIRE-5lambda}
		Let $K \ni \sqrt2$ be a totally real number field satisfying \eqref{eq:SmallCondition}. If $K$ admits a universal ternary classical lattice which is of type non-LIRE, then $5\lambda=5(2+\sqrt2)$ is a square in $K$. 
	\end{proposition}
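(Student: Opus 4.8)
The plan is to combine the classification in Lemma~\ref{le:3x3} with the two case-lemmas~\ref{le:diag2} and~\ref{le:diag3}, using universality to produce a suitable rank-$3$ sublattice and the non-LIRE hypothesis to discard the unwanted isometry classes. First I would observe that condition~\eqref{eq:SmallCondition} is stronger than~\eqref{eq:SmallConditionWeak} (because $3\preceq6$), so Lemma~\ref{le:3x3} is available for every field $K$ satisfying~\eqref{eq:SmallCondition}.

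Now let $L$ be a universal ternary classical lattice of type non-LIRE over $K$. Since $1,\lambda,3\in\OKPlus$, universality yields vectors $\vv_1,\vv_2,\vv_3\in L$ with $Q(\vv_1)=1$, $Q(\vv_2)=\lambda$, $Q(\vv_3)=3$. I would then pass to the sublattice $L'=\OK\vv_1+\OK\vv_2+\OK\vv_3\subset L$, which inherits classicality from $L$, and apply Lemma~\ref{le:3x3}: up to isometry, $L'$ is one of $L_1,L_2,L_3,L_3'$ from~\eqref{eq:ListLattices}, or $\qf{1,\lambda,2}$, or $\qf{1,\lambda,3}$.

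The rest is case elimination. If $L'\simeq L_i$ for one of the four lattices in~\eqref{eq:ListLattices}, then $L$ contains a sublattice isometric to $\OK\otimes L_i$ -- a universal ternary classical $\Q(\!\sqrt2)$-lattice -- so $L$ would be of type LIRE, contradicting the hypothesis. If $L'\simeq\qf{1,\lambda,3}$, then $L$ contains a sublattice isometric to $\qf{1,\lambda,3}$, whence Lemma~\ref{le:diag3} forces $L$ to be non-universal, again a contradiction. The only surviving possibility is $L'\simeq\qf{1,\lambda,2}$; then $L$ is a universal ternary classical non-LIRE lattice containing a copy of $\qf{1,\lambda,2}$, so Lemma~\ref{le:diag2} gives $\sqrt{5\lambda}\in K$, i.e., $5\lambda$ is a square in $K$, as required.

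The genuine difficulty of the proposition has already been absorbed into the lemmas -- the delicate Gram-matrix computation bounding $x^2$ in Lemma~\ref{le:diag2} and the LMFDB unit-signature argument in Lemma~\ref{le:diag3} -- so at this level the only point demanding care is the non-LIRE elimination: I must be sure that an isometry $L'\simeq L_i$ genuinely realises $L$ as type LIRE (which it does, since $L'\subset L$ and the $L_i$ are exactly the $\OK$-extensions of the universal $\Q(\!\sqrt2)$-lattices), and that $L'$ really is a classical sublattice of the correct rank so that Lemma~\ref{le:3x3} applies verbatim (the linear independence of $\vv_1,\vv_2,\vv_3$ being guaranteed a posteriori by the classification itself).
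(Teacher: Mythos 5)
Your proposal is correct and follows essentially the same route as the paper's own proof: apply Lemma~\ref{le:3x3} to the sublattice generated by vectors representing $1$, $\lambda$, $3$, discard the four lattices from \eqref{eq:ListLattices} via the non-LIRE hypothesis, eliminate $\qf{1,\lambda,3}$ by Lemma~\ref{le:diag3}, and conclude $\sqrt{5\lambda}\in K$ from Lemma~\ref{le:diag2} in the remaining case. The care you take over classicality of the sublattice and the a posteriori linear independence matches the paper's implicit handling of those points.
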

	\begin{proof}
		First, \eqref{eq:SmallCondition} implies the assumptions of Lemma~\ref{le:3x3}. Thus, if $L$ is a universal ternary classical lattice over $K$, then the sublattice generated by the vectors representing $1$, $\lambda$ and $3$ is either isometric to $L_1$, $L_2$, $L_3$ or $L_3'$ (all of which is impossible, as $L$ is of type non-LIRE), or it is isometric to $\qf{1,\lambda,2}$ or $\qf{1,\lambda,3}$.
		
		In the former case, Lemma~\ref{le:diag2} yields that $5\lambda$ is a square; in the latter case, Lemma~\ref{le:diag3} yields that no such universal ternary classical lattice exists. Thus, $5\lambda$ must be a square.
	\end{proof}
	
	Assuming that the universal ternary lattice is free, we can strengthen the result of Proposition~\ref{pr:nonLIRE-5lambda}.
	
	\begin{proposition}\label{pr:nonLIRE-free}
		Let $K \ni \sqrt2$ be a totally real number field satisfying \eqref{eq:SmallCondition}. Then there is no universal ternary classical \textbf{free} lattice over $K$ of type non-LIRE. 
	\end{proposition}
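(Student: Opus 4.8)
The plan is to argue by contradiction, using freeness to run a determinant computation whose outcome clashes with condition \eqref{eq:SmallCondition}. Suppose $L$ is a universal ternary classical free lattice of type non-LIRE over $K$. By Proposition~\ref{pr:KKK} we may assume $\UKPlus=\UKctv$. Exactly as at the start of the proof of Proposition~\ref{pr:nonLIRE-5lambda}, the sublattice $S$ spanned by vectors representing $1$, $\lambda$ and $3$ is, by Lemma~\ref{le:3x3}, isometric either to one of $L_1,L_2,L_3,L_3'$ -- impossible, as $L$ is of type non-LIRE -- or to $\qf{1,\lambda,2}$ or $\qf{1,\lambda,3}$; Lemma~\ref{le:diag3} rules out the last option, so $S\simeq\qf{1,\lambda,2}$. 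Fix an orthogonal basis $\e_1,\e_2,\e_3$ of $S$ with $Q(\e_1)=1$, $Q(\e_2)=\lambda$, $Q(\e_3)=2$. As $L$ is universal it represents the totally positive element $3(2-\sqrt2)$ by some $\vv_4\in L$, and I would reinspect the case analysis in the proof of Lemma~\ref{le:diag2}: under \eqref{eq:SmallCondition} every configuration of the entries $B_Q(\vv_4,\e_i)$ is excluded save the one responsible for the conclusion $5\lambda=\square$, namely $B_Q(\vv_4,\e_1)=0$ and $B_Q(\vv_4,\e_2)=\pm1$ (with $B_Q(\vv_4,\e_3)^2=5(2-\sqrt2)$).

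This configuration produces two full-rank sublattices of $L$ with coprime determinants. The first is $S_1=S\simeq\qf{1,\lambda,2}$, so $\det S_1=2\lambda$. The second is $S_2=\OK\e_1+\OK\e_2+\OK\vv_4$ (contained in $L$ since $\e_1,\e_2\in S\subseteq L$ and $\vv_4\in L$); as $\e_1$ is orthogonal to both $\e_2$ and $\vv_4$, we have $S_2\simeq\qf1\perp\anglematrix{\lambda}{\pm1}{3(2-\sqrt2)}$, and using $\lambda(2-\sqrt2)=2$ its determinant is $3\lambda(2-\sqrt2)-1=5$ (in particular $\e_1,\e_2,\vv_4$ are independent, so $S_2$ does have rank $3$). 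Because $L$ is free, both $S_1$ and $S_2$ are full-rank free sublattices, so $\det L$ divides each of $2\lambda$ and $5$ up to units; as $\norm{K/\Q}{2\lambda}$ and $\norm{K/\Q}{5}$ are coprime powers of $2$ and $5$, the elements $2\lambda$ and $5$ are coprime, whence $\det L\in\UK$. Positive definiteness gives $\det L\succ0$, so $\det L\in\UKPlus=\UKctv$ and $L$ is unimodular. Substituting this into $\det S_2=(\det C)^2\det L$ for the integral change-of-basis matrix $C$ yields that $5$ is a square in $\OK$, i.e.\ $\sqrt5\in K$.

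Now the contradiction is immediate: applying \eqref{eq:SmallCondition} to $\omega=\sqrt5\in\OK$, for which $\omega^2=5\preceq6$, would force $\sqrt5\in\Q(\!\sqrt2)$, which is absurd. Therefore no universal ternary classical free lattice of type non-LIRE can exist over $K$. I expect the genuine work to lie in the last step of the first paragraph -- pinning down, from the proof of Lemma~\ref{le:diag2}, not just the bare conclusion $5\lambda=\square$ but the exact surviving Gram data of $\vv_4$, since it is precisely that data which yields the determinant-$5$ sublattice $S_2$. Once this is in hand, the determinant bookkeeping (full-rank free sublattices have determinant divisible by $\det L$, and $2\lambda$, $5$ are coprime) and the final appeal to \eqref{eq:SmallCondition} are routine.
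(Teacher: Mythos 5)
Your proof is correct, and its second half takes a genuinely different route from the paper's. The first paragraph coincides with the paper's own setup: the paper likewise reduces to $S\simeq\qf{1,\lambda,2}$ and, by re-inspecting the proof of Lemma~\ref{le:diag2}, pins down exactly the Gram data you describe ($\alpha=0$, $\beta^2=1$, $x^2=5(2-\sqrt2)$). From there, however, the paper proceeds structurally: it assembles the four vectors into a lattice $J\simeq\qf{1}\perp J_0$ defined over $F=\Q(\!\sqrt{5\lambda})$, shows $J$ is unimodular (hence has no proper classical overlattice, so $L=J$) but non-free over $F$, and then uses freeness of $L$ over $K$ to force the relevant ideal of $F$ to become principal in $K$; since $h(F)=2$ (an input from LMFDB), \emph{all} ideals of $F$ capitulate, in particular the prime $\ip_2$ with $\ip_2^2=(2+\sqrt2)$, so $2+\sqrt2=\omega^2$ in $K$ (using $\UKPlus=\UKctv$), contradicting \eqref{eq:SmallCondition}. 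Your determinant bookkeeping replaces all of this: the full-rank free sublattices $S_1\simeq\qf{1,\lambda,2}$ and $S_2\simeq\qf{1}\perp\anglematrix{\lambda}{\pm1}{3(2-\sqrt2)}$ of the free lattice $L$ have determinants $2\lambda$ and $5$, coprime by the norm argument, so $\det L$ is a totally positive unit, hence a square of a unit (again $\UKPlus=\UKctv$), whence $5=(\det C_2)^2\det L$ is a square in $\OK$ and $\sqrt5\in\OK$; then $5\prec6$ and \eqref{eq:SmallCondition} give the contradiction. Your version is more elementary and self-contained -- no class-number input, no capitulation argument, no verification that $J_0$ is unimodular and non-free. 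What the paper's longer route buys is structural information reused later: the identification $L=J$ is established for arbitrary (not necessarily free) universal non-LIRE lattices, and the lattice $\qf{1}\perp J_0$ over $\Q(\!\sqrt{5\lambda})$ is cited in the remark following the proposition as the unique non-LIRE candidate sublattice. Both arguments invoke freeness exactly once: yours to run the determinant comparison, the paper's to force an ideal of $F$ to become principal in $K$.
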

	\begin{proof}
		Suppose that there exists a universal ternary classical lattice $L$ over $K$ of type non-LIRE; $L$ is not necessarily free. For later use, note that $\UKPlus=\UKctv$ by Proposition~\ref{pr:KKK}. By Proposition~\ref{pr:nonLIRE-5lambda}, we must have $\sqrt{5\lambda}\in K$. Denote $F=\Q(\!\sqrt{5\lambda})$. By inspecting the proof of Lemma~\ref{le:diag2}, we see that $L$ contains a sublattice $J$ generated by four vectors with Gram matrix $G$ from \eqref{eq:Gram} corresponding to the values $x^2=5(2-\sqrt2)$, $\beta^2=1$, $\alpha=0$. If we compute how to express the vector representing $2$ as a linear combination of the vectors representing $\lambda$ and $3(2-\sqrt2)$, we arrive at $J\simeq\qf{1}\perp J_0$ where $J_0=\ia\e_1 + \OK \e_2$ and the Gram matrix of $\e_1,\e_2$ is $\bigl(\begin{smallmatrix} 2 & \vartheta \\ \vartheta & 3(2-\sqrt2)\end{smallmatrix}\bigr)$ and $\ia=(\frac{5}{\vartheta},1)$ with $\vartheta=\sqrt{\vphantom{\sqrt2}\smash{5(2-\sqrt2)}}$. (Note that $F$ is a Galois extension of $\Q$, and hence indeed $\vartheta\in F$.)
		
		The lattice $J$ is not free over $F$. It is unimodular. Hence, it does not have any proper overlattice (over any field), so $L=J$. Therefore, the only possibility for a universal ternary classical free lattice of type non-LIRE is this lattice $J$ over a field $K$ over which it becomes free. In particular, $\ia$ must be a principal fractional ideal in $K$. Since $h(F)=2$, it follows that all ideals from $F$ must become principal in $K$. However, in $F$, we have $(2+\sqrt2) = \ip_2^2$. Since $\ip_2$ is a principal ideal in $K$ and $\UKPlus=\UKctv$, we get $2+\sqrt2=\omega^2$; that contradicts the condition \eqref{eq:SmallCondition}.
	\end{proof}
	
	\begin{remark}
		We would like to point out that the result of Proposition~\ref{pr:nonLIRE-5lambda} is quite strong. 
		\begin{enumerate}[wide=0pt]
			\item Note that if $K\ni\sqrt2$ is a field satisfying \eqref{eq:SmallCondition} and $L$ a universal ternary classical lattice over $K$, then $L$ has to contain a sublattice isometric to one of the following lattices:
			\begin{itemize}
				\item $L_1$, $L_2$, $L_3$, or $L_3'$ listed in \eqref{eq:ListLattices} which are defined over $\Q(\!\sqrt2)$; or
				\item the lattice $\qf{1}\perp J_0$ from the proof of Proposition~\ref{pr:nonLIRE-free} which is defined over $\Q(\!\sqrt{5\lambda})$. 
			\end{itemize}
			\item It seems that most fields satisfy the condition \eqref{eq:SmallCondition}; see Remark~\ref{re:AllExcField} for the situation in quartic fields.
		\end{enumerate}
	\end{remark}
	
	%-----------------------------
	\subsubsection{When small elements do exist}
	
	The following fields will need to be handled separately in some of the upcoming proofs. The lower index is the discriminant of the field.
	\[
	\arraycolsep=1.5pt\def\arraystretch{1.8}
	\begin{array}{lclcl}
		K_{1600}&=&\Q\bigl(\!\sqrt2,\sqrt{5}\bigr)=\Q(\alpha) & \hphantom{i} & \text{where $\alpha$ is a root of } \iks^4-6\iks^2+4,\\
		K_{2048}&=&\Q\bigl(\zeta_{16}^{}+\zeta_{16}^{-1}\bigr)=\Q\bigl(\!\sqrt{\lambda}\bigr)=\Q(\alpha) & & \text{where $\alpha$ is a root of } \iks^4-4\iks^2+2,\\
		K_{2624}&=&\Q(\alpha) & & \text{where $\alpha$ is a root of } \iks^4-2\iks^3-3\iks^2+2\iks+1,\\
		K_{10816}&=&\Q\bigl(\!\sqrt2,\sqrt{13}\bigr)=\Q(\alpha) & & \text{where $\alpha$ is a root of } \iks^4-2\iks^3-9\iks^2+10\iks-1,\\
		K_{51200}&=&\Q\bigl(\!\sqrt{5\lambda}\bigr)=\Q(\alpha) & & \text{where $\alpha$ is a root of } \iks^4-20\iks^2+50.\\
	\end{array}
	\]
	As $K_{2624}$ is not a Galois extension of $\Q$, there are two distinct copies contained in $\R$: $\Q\bigl(\!\sqrt{7+2\sqrt2}\bigr)$ and $\Q\bigl(\!\sqrt{7-2\sqrt2}\bigr)$. An interesting fact about $K_{51200}$ is that among totally real quartic fields $K\ni\sqrt2$, it is the first (by discriminant) with $h(K)> 1$.
	
	\bigskip
	
	The following lemma is the only result in this paper which heavily depends on the use of a computer. But the computations are not particularly extensive -- they took around $15$ minutes on a PC.
	
	Remember that by Proposition~\ref{pr:KKK}, the only relevant fields are those where all totally positive units are squares. This is why we assume $\UKPlus=\UKctv$ in the following lemma; we could prove the statement without this assumption (with more exceptional fields) at the cost of running a more extensive computation, see Remark~\ref{re:AllExcField}.

	\begin{lemma} \label{le:solve1308fields}
		Let $K \ni \sqrt2$ be a totally real quartic field with $\UKPlus=\UKctv$. Let $\omega\in \OK$.
		\begin{enumerate}
			\item If $K \neq K_{2048}, K_{2624}$, then $\omega^2 \preceq 3\lambda$ implies $\omega \in \Q(\!\sqrt2)$.
			\item If $K \neq K_{1600}, K_{2048}, K_{2624}, K_{10816}$, then $\omega^2 \preceq 6$ implies $\omega \in \Q(\!\sqrt2)$.
		\end{enumerate}
	\end{lemma}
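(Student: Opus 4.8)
The plan is to recast the lemma as a statement about \enquote{small} elements: apart from the listed fields, no $K$ in the given class contains an $\omega\in\OK\setminus\Q(\!\sqrt2)$ with $\omega^2\preceq3\lambda$ (part (a)) or $\omega^2\preceq6$ (part (b)). I would prove this by showing that the mere \emph{existence} of such an $\omega$ forces $\disc K$ below an explicit bound, which leaves only finitely many candidate fields to inspect on a computer. Throughout, write $c=3\lambda=6+3\sqrt2$ in case (a) and $c=6$ in case (b). The first, elementary, observation is that $\omega^2\preceq c$ gives $\sigma_i(\omega)^2=\sigma_i(\omega^2)\le\sigma_i(c)\le\house{c}$ for every embedding $\sigma_i$, so $\house{\omega}\le\sqrt{\house{c}}$, where $\house{3\lambda}=6+3\sqrt2$ and $\house{6}=6$. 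Moreover, since $\omega\notin\Q(\!\sqrt2)$ while $\Q(\!\sqrt2)\subset K$ and $[K:\Q]=4$, we must have $K=\Q(\!\sqrt2,\omega)$.

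Next I would bound $\disc K$, splitting according to $\deg_\Q\omega$. If $\omega$ has degree $4$, then $K=\Q(\omega)$ and Proposition~\ref{pr:KubasBound} applies directly:
\[
\disc K\le\frac{2^{12}}{5^5}\house{\omega}^{12}\le\frac{2^{12}}{5^5}\house{c}^{6},
\]
which is below $1.52\times10^{6}$ in case (a) and below $6.2\times10^{4}$ in case (b). If instead $\omega$ has degree $2$, then $\Q(\omega)$ is a quadratic subfield of $K$ distinct from $\Q(\!\sqrt2)$, so $K$ is biquadratic, $K=\Q(\!\sqrt2,\sqrt{m})$ with $\Q(\omega)=\Q(\!\sqrt m)$. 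Writing the two conjugates of $\omega$ as $u\pm v\sqrt m$, their difference $2v\sqrt m$ is a nonzero $\Z$-multiple of $\sqrt m$ (because $\omega$ is a nonrational algebraic integer), while both conjugates lie in $[-\sqrt{\house c},\sqrt{\house c}]$; hence $\sqrt m\le2\sqrt{\house c}$, i.e. $m\le4\house{c}$. This yields $m\le40$ in case (a) and $m\le24$ in case (b), and all such biquadratic fields have discriminant far below the bounds above. In either case $\disc K$ is bounded by an explicit constant.

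The final step is the finite verification. Only finitely many totally real quartic fields $K\ni\sqrt2$ have discriminant below these bounds, and I would enumerate them, for instance as quadratic extensions $\Q(\!\sqrt2)(\sqrt\delta)$ of bounded relative discriminant via $\disc K=64\,\norm{\Q(\!\sqrt2)/\Q}{\mathfrak{d}_{K/\Q(\!\sqrt2)}}$, or directly from \cite{LMFDB}. The hypothesis $\UKPlus=\UKctv$ prunes the list sharply: using $\abs{\UKPlus/\UKctv}=h^{+}(K)/h(K)$ one discards every field with $h^{+}(K)\ne h(K)$. For each surviving field I would compute a $\Z$-basis of $\OK$ and search for $\omega\in\OK\setminus\Q(\!\sqrt2)$ satisfying the coordinatewise inequality $\omega^2\preceq c$; this is a bounded lattice-point search in the box $\prod_i[-\sqrt{\house c},\sqrt{\house c}]$ under $\OK\hookrightarrow\R^4$. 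The fields for which this search succeeds are exactly the exceptional ones, namely $K_{2048},K_{2624}$ in case (a) and additionally $K_{1600},K_{10816}$ in case (b); for every other field the search fails and the conclusion holds.

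The main obstacle is therefore not conceptual but computational: one must guarantee completeness of the field enumeration up to the (rather large, in case (a)) discriminant bound and the correctness of the short-vector search. This is precisely the point at which the argument depends on the computer, as advertised in the statement. A reassuring feature is that possessing an $\omega$ with $\omega^2\preceq c$ is a very restrictive property, so in practice the short-vector search terminates almost immediately for the overwhelming majority of candidate fields, and the handful of genuine exceptions stands out clearly.
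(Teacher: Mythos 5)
Your proposal is correct and follows essentially the same route as the paper: bound $\house{\omega}$ by the coordinatewise inequality, apply Proposition~\ref{pr:KubasBound} to get $\disc K \lesssim 1.51\times10^6$ when $\Q(\omega)=K$, handle the case $[\Q(\omega):\Q]=2$ separately by reducing to finitely many biquadratic fields (the paper uses a trace bound where you use the conjugate-difference bound $m\leq 4\house{c}$, an immaterial difference), and finish with an LMFDB enumeration pruned by $h^+(K)=h(K)$ followed by a Magma search for small elements. The only point worth adding to your computation is the paper's remark that LMFDB lists fields up to isomorphism, which may swap $\lambda=2+\sqrt2$ with $2-\sqrt2$; this is harmless because the validity of the implication is stable under such isomorphisms.
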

	\begin{proof}
		We prove both parts at the same time. Assume that $\omega \notin \Q(\!\sqrt2)$ satisfies either of the inequalities.
		
		Assume that $\Q(\omega) = K$. The condition $\omega^2\preceq 6$ is equivalent to $\house{\omega} \leq \sqrt6$, while the condition $\omega^2 \preceq 3\lambda$ implies $\house{\omega} \leq \sqrt{6+3\sqrt2} \approx 3.2004$. In either case, $K = \Q(\omega)$ with $\house{\omega} \leq \sqrt{6+3\sqrt2}$. This yields a bound on the discriminant of $K$. In particular, by Proposition~\ref{pr:KubasBound} we get
		\[
		\disc K \leq \frac{2^{12}}{5^5}\house{\omega}^{12} \leq \frac{2^{12}}{5^5}(6+3\sqrt2)^6 \approx 1\,513\,496.96.
		\]
		The LMFDB database \cite{LMFDB} contains all totally real quartic fields with discriminant up to $10^7$; in particular, it contains all the fields where such an $\omega$ can exist. We downloaded all 1308 totally real quartic fields $K\ni\sqrt2$ with $\disc K < 1\,513\,497$; after disregarding those where $\abs{\UKPlus/\UKctv} > 1$, we were left with 360 fields.
		
		Before we proceed, let us clarify that this list gives fields only up to isomorphism (i.e., up to embedding into $\R$), and these isomorphisms may send $\lambda$ to $2-\sqrt2$. Fortunately, we see that the (in)validity of the implication $\omega^2 \preceq 3\lambda \,\Rightarrow\, \omega\in\Q(\!\sqrt2)$ is stable under field isomorphism, since the isomorphisms map $\Q(\!\sqrt2)$ to itself. Thus, in the process described below, it is indeed enough to check just $360$ fields -- one need not worry about the isomorphisms in the non-Galois case.
		
		In each of these remaining fields, we used a simple program in Magma to find all elements satisfying $\omega^2\preceq 3\lambda$ or $\omega^2\preceq 6$. In almost all cases we got precisely the 5 values listed in Lemma~\ref{le:smallsquares}\ref{le:smallsquares-d} and the 11 values listed in Lemma~\ref{le:smallsquares}\ref{le:smallsquares-c}, all belonging to $\Q(\!\sqrt2)$. The only exceptional fields were $K_{1600}$, $K_{2048}$, $K_{2624}$ and $K_{10816}$, just as the lemma claims.
		
		Finally, let us handle the case when $\Q(\omega) \neq K$. Then $\Q(\omega)$ is a real quadratic field, and $\omega$ must satisfy $2\Tr{\Q(\omega)/\Q}{\omega^2} = \Tr{K/\Q}{\omega^2}\leq 24$. This implies that $\Q(\omega)=\Q(\!\sqrt{n})$ with $n \leq 6$ or $n \equiv 1 \pmod4$, $n \leq 24$, which leaves only very few biquadratic fields $K = \Q(\!\sqrt{2},\sqrt{n})$ to check. Since all of them have rather small discriminants (at most $28\,244$, which corresponds to $\Q(\!\sqrt2,\sqrt{21})$), they were already included in the computation which solved the main part of the proof.
	\end{proof} 
	
	\begin{remark}\label{re:AllExcField}
		Note that the four fields in Lemma~\ref{le:solve1308fields} are the only quartic fields with $\UKPlus=\UKctv$ that violate condition \eqref{eq:SmallCondition}. Out of curiosity, we ran the same program for all $1308$ fields mentioned in the previous proof (no longer excluding those with $\UKPlus \neq \UKctv$), resulting in the following statement.
		
		\textit{
			Let $K\ni \sqrt2$ be a totally real quartic field. Let $\omega \in \OK$.
			\begin{enumerate}
				\item If $K \neq K_{2048}, K_{2624}$ and also $K\neq K_{7168}, K_{18432}$, then $\omega^2 \preceq 3\lambda$ implies $\omega\in\Q(\!\sqrt2)$.
				\item If $K \neq K_{1600}, K_{2048}, K_{2624}, K_{10816}$ and also $K\neq K_{2304}, K_{7168}, K_{14336}$, then $\omega^2 \preceq 6$ implies $\omega \in \Q(\!\sqrt2)$.
			\end{enumerate}
		}
		
		It is interesting to note that these new \enquote{exceptional fields} (which nevertheless require no exceptional treatment in our proof as they contain a nonsquare totally positive unit) are only those fields which we already discovered in proofs of Lemmas~\ref{le:diag2} and~\ref{le:diag3} as fields that clearly violate \eqref{eq:SmallCondition}: $K_{7168}=\Q\bigl(\!\sqrt{3+\sqrt2}\bigr)$, $K_{18432}=\Q\bigl(\!\sqrt{3\lambda}\bigr)$, $K_{2304}=\Q\bigl(\!\sqrt2,\sqrt3\bigr)$ and $K_{14336}=\Q\bigl(\!\sqrt{4+\sqrt2}\bigr)$.
	\end{remark}

	%-----------------------------
	\subsubsection{Lattices of type non-LIRE in quartic fields}

	Finally, we are almost ready to prove that ternary lattices of type non-LIRE are never universal over fields of degree $4$. Note that the solution for a few exceptional fields is postponed -- see Lemma~\ref{le:exceptional}.
	
	\begin{proposition} \label{pr:nonLIRE}
		Let $K \ni \sqrt2$ be a totally real number field with $[K:\Q]=4$. Moreover, suppose that $K\neq K_{1600}, K_{2048}, K_{2624}, K_{10816}, K_{51200}$. If $L$ is a ternary classical lattice over $K$ which is of type non-LIRE, then $L$ is not universal.
	\end{proposition}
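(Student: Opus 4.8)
The plan is to argue by contradiction: suppose $L$ is a universal ternary classical lattice over $K$ of type non-LIRE. The first step is to invoke Proposition~\ref{pr:KKK}: since $K\ni\sqrt2$ admits a universal ternary classical lattice, we must have $\UKPlus=\UKctv$. This is exactly the hypothesis of the computational Lemma~\ref{le:solve1308fields}.

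Next, I would combine $\UKPlus=\UKctv$ with the standing exclusions. Since $K\neq K_{2048},K_{2624}$, part~(a) of Lemma~\ref{le:solve1308fields} gives that $\omega^2\preceq 3\lambda$ forces $\omega\in\Q(\!\sqrt2)$; since $K\neq K_{1600},K_{2048},K_{2624},K_{10816}$, part~(b) gives that $\omega^2\preceq 6$ forces $\omega\in\Q(\!\sqrt2)$. Taken together, these two implications are precisely condition~\eqref{eq:SmallCondition}. Thus $K$ satisfies \eqref{eq:SmallCondition}, which unlocks the machinery of Subsection~\ref{ss:nonLIRE}.

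With \eqref{eq:SmallCondition} available, Proposition~\ref{pr:nonLIRE-5lambda} applies immediately: since $K$ admits a universal ternary classical lattice of type non-LIRE, the element $5\lambda=5(2+\sqrt2)$ must be a square in $K$, i.e.\ $\sqrt{5\lambda}\in K$. The final step is to pin down the resulting field. From $(\sqrt{5\lambda})^2=10+5\sqrt2$ we recover $\sqrt2=\tfrac15\bigl((\sqrt{5\lambda})^2-10\bigr)$, so $\Q(\!\sqrt2)\subset\Q(\!\sqrt{5\lambda})$; as $10+5\sqrt2$ is visibly not a square in $\Q(\!\sqrt2)$, the field $\Q(\!\sqrt{5\lambda})$ has degree $4$ over $\Q$, with $\sqrt{5\lambda}$ a root of $\iks^4-20\iks^2+50$. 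This identifies $\Q(\!\sqrt{5\lambda})=K_{51200}$. Because $\sqrt{5\lambda}\in K$ and $[K:\Q]=4$, the inclusion $K_{51200}\subset K$ of two quartic fields forces $K=K_{51200}$, contradicting the standing hypothesis $K\neq K_{51200}$.

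The heavy lifting has already been carried out in Proposition~\ref{pr:nonLIRE-5lambda} and the computational Lemma~\ref{le:solve1308fields}, so I do not expect a genuine obstacle here; this proposition is essentially an assembly step. The only point requiring care is the bookkeeping: one must check that the five excluded fields split exactly into the four ``small-element'' exceptions of Lemma~\ref{le:solve1308fields} needed to guarantee \eqref{eq:SmallCondition}, plus the single field $K_{51200}=\Q(\!\sqrt{5\lambda})$ produced by the ``$5\lambda$ is a square'' conclusion. That these five exceptions close the argument perfectly — and that $K_{51200}$ genuinely cannot be dispatched by this route, since $5\lambda$ really is a square there, which is why it is deferred to Lemma~\ref{le:exceptional} — is the substance of the statement.
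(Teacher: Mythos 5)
Your proposal is correct and follows essentially the same route as the paper's proof: both rest on Proposition~\ref{pr:KKK}, Lemma~\ref{le:solve1308fields}, and Proposition~\ref{pr:nonLIRE-5lambda}, ending with the identification $\Q(\!\sqrt{5\lambda})=K_{51200}$. The only difference is organizational --- the paper splits into cases according to whether \eqref{eq:SmallCondition} holds, whereas you first extract $\UKPlus=\UKctv$ and then deduce \eqref{eq:SmallCondition} from the field exclusions, which is a clean linearization of the same argument.
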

	\begin{proof}
		Suppose that $L$ is universal over $K$. First assume that $K$ satisfies \eqref{eq:SmallCondition}. Then $\sqrt{5\lambda}\in K$ by Proposition~\ref{pr:nonLIRE-5lambda}. Since $K$ is quartic, $K$ must be $\Q(\!\sqrt{5\lambda})=K_{51200}$. 
		
		Assume now that there exists an element $\omega \in \OK \setminus \Q(\!\sqrt2)$ satisfying $\omega^2\preceq 3\lambda$ or $\omega^2 \preceq 6$.
		If $\UKPlus=\UKctv$, then by Lemma~\ref{le:solve1308fields}, $K$ is necessarily one of $K_{1600}$, $K_{2048}$, $K_{2624}$ and $K_{10816}$. 
		If $\UKPlus\neq\UKctv$, then there is no universal ternary classical lattice over $K$ by Proposition~\ref{pr:KKK}. 
	\end{proof}
	
	%--------------------------------------------
	\subsection{Exceptional fields} \label{ss:Exceptional}
	
	It remains to handle the five fields for which the main proof failed for various reasons. Over them, we look at all ternary classical lattices, making no assumptions about the LIRE-type.
	
	\begin{lemma}\label{le:exceptional}
		Over the following quartic fields, there are no universal ternary classical lattices:
		$K_{1600}$, $K_{2048}$, $K_{2624}$, $K_{10816}$, $K_{51200}$.
	\end{lemma}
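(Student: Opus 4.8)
The plan is to dispose of the five fields individually, after one uniform reduction: if $\UKPlus\neq\UKctv$ for any of them, then Proposition~\ref{pr:KKK} already forbids a universal ternary classical lattice, so I may assume $\UKPlus=\UKctv$ throughout. The crucial simplification comes from the remark preceding this lemma, namely that $K_{51200}$ is the \emph{first} totally real quartic field containing $\sqrt2$ with nontrivial class number; hence $h(K_{1600})=h(K_{2048})=h(K_{2624})=h(K_{10816})=1$, so over these four fields every classical lattice is free and thus corresponds to a classical quadratic form. This lets me invoke form-level results for them, and it also means that over them Proposition~\ref{pr:noLIRE} rules out \emph{all} LIRE lattices (not merely the free ones), leaving only the non-LIRE case to treat.

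Three of the fields then follow from the literature. Both $K_{1600}=\Q(\!\sqrt2,\sqrt5)$ and $K_{10816}=\Q(\!\sqrt2,\sqrt{13})$ are real biquadratic, so by \cite{KTZ} they admit no universal ternary classical quadratic form, and since $h=1$ this rules out universal lattices as well. The field $K_{2048}=\Q(\zeta_{16}^{}+\zeta_{16}^{-1})=F_{16}$ is the maximal totally real subfield of $\Q(\zeta_{2^4})$, so \cite{KY-EvenBetter} (the case $k=2^n$ with $n\neq3$) shows it admits no universal ternary classical form, again enough because $h=1$.

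For $K_{2624}$ the LIRE case is already settled by Proposition~\ref{pr:noLIRE}, so only non-LIRE lattices remain. Since $K_{2624}$ is precisely one of the fields excluded from Proposition~\ref{pr:nonLIRE} because it violates \eqref{eq:SmallCondition}, I would rerun the argument of Lemmas~\ref{le:3x3},~\ref{le:diag2} and~\ref{le:diag3} using the \emph{explicit} list of small elements $\omega$ with $\omega^2\preceq3\lambda$ or $\omega^2\preceq6$ produced by the Magma computation behind Lemma~\ref{le:solve1308fields}. Allowing the off-diagonal Gram entries to take these few extra values (those lying outside $\Q(\!\sqrt2)$) enlarges the finite case analysis but keeps it finite; each resulting candidate ternary lattice is then tested, and I expect every one either to be isometric to a lattice of type LIRE (impossible for a non-LIRE lattice) or to fail to represent some explicit totally positive element.

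Finally, over $K_{51200}=\Q(\!\sqrt{5\lambda})$ there are genuinely non-free lattices since $h=2$, so I must argue at the level of lattices. This field \emph{does} satisfy \eqref{eq:SmallCondition}, so Lemma~\ref{le:3x3} applies: the sublattice spanned by vectors representing $1,\lambda,3$ is isometric to one of $L_1,L_2,L_3,L_3'$, or to $\qf{1,\lambda,2}$, or to $\qf{1,\lambda,3}$. The case $\qf{1,\lambda,3}$ dies by Lemma~\ref{le:diag3}; in the LIRE cases the lattice is an overlattice of some $L_i$, which I rule out using the overlattice computations of Subsection~\ref{ss:LIRE} (Proposition~\ref{pr:noLIRE} for free overlattices, together with the observation that $L_2$ is unimodular and that, since $5+3\sqrt2$ stays squarefree and $3-\sqrt2$ is unramified here, $L_3$ has no proper classical overlattice at all over $K_{51200}$, with the few non-free overlattices of $L_1$ checked directly). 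In the remaining case $\qf{1,\lambda,2}$, Proposition~\ref{pr:nonLIRE-5lambda} forces $5\lambda$ to be a square (true here), and the analysis in the proof of Proposition~\ref{pr:nonLIRE-free} pins the lattice down to the single explicit non-free unimodular lattice $J=\qf{1}\perp J_0$ described there; being unimodular, $J$ has no proper overlattice, so any universal lattice must equal $J$, and it then suffices to show $J$ is \emph{not} universal by exhibiting one totally positive element it fails to represent. The main obstacle throughout is exactly this last type of step: once the extra small elements (for $K_{2624}$) or the single non-free candidate $J$ (for $K_{51200}$) have been isolated, the clean norm/trace and Cauchy--Schwarz bookkeeping no longer closes the argument, and one must fall back on an explicit, computer-assisted representation check to confirm non-universality.
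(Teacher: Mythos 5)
Your reduction steps are sound: Proposition~\ref{pr:KKK} lets you assume $\UKPlus=\UKctv$; the four fields of discriminant below $51200$ have class number $1$, so over them lattices and forms coincide and Proposition~\ref{pr:noLIRE} kills every LIRE lattice; $K_{1600}$ and $K_{10816}$ then follow from \cite{KTZ} exactly as in the paper; and your treatment of $K_{2048}=F_{16}$ by citing \cite{KY-EvenBetter} (the case $k=2^4$) is a legitimate shortcut that the paper does not take -- the paper instead handles $K_{2048}$ directly. However, for $K_{2624}$ and $K_{51200}$ your proposal is a plan rather than a proof. For $K_{2624}$ you propose to rerun the Gram-matrix case analysis of Lemmas~\ref{le:3x3}, \ref{le:diag2}, \ref{le:diag3} with the enlarged list of small elements and then state that you ``expect'' each surviving candidate to be LIRE or non-universal; nothing is verified, no obstruction is exhibited, and the conclusion is left conditional on a computation you do not perform. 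For $K_{51200}$ the same happens twice: the non-free classical overlattice of $L_1$ (which exists, since $\lambda\OK=\ip_2^2$ with $\ip_2$ non-principal there) is only ``checked directly,'' and the non-universality of the unimodular lattice $J$ is reduced to ``exhibiting one totally positive element it fails to represent'' without exhibiting one. These are precisely the points where the lemma's content lies, so the argument has a genuine gap for two of the five fields.

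The paper closes these cases by a different and uniform device that avoids the LIRE/non-LIRE and free/non-free split altogether: for each of $K_{2048}$, $K_{2624}$, $K_{51200}$ it names four totally positive elements (respectively $1,p_2,p_{17},p_{17}'$; $1,\lambda,p_7,p_7'$; $1,\lambda,\alpha_{14},\alpha_{14}'$) such that (i) a small-element check of the shape ``$\omega^2\preceq\alpha_i\alpha_j$ implies $\omega=0$'' forces the vectors representing the first three to be pairwise orthogonal in any classical lattice, and (ii) the last two elements have equal norm but non-square product. Orthogonality applied to both triples ($1,\lambda,p_7$ and $1,\lambda,p_7'$, say) forces the vectors representing $p_7$ and $p_7'$ to span the same line, hence to differ by a scalar, making $p_7p_7'$ a square -- a contradiction. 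This argument applies verbatim to arbitrary (possibly non-free) ternary classical lattices, which is exactly what the statement requires over $K_{51200}$ where $h(K)=2$. If you want to salvage your route, the minimal fix is to replace your two deferred computations by such an explicit four-element obstruction (or actually carry out and report the representation checks you postpone); as written, the cases $K_{2624}$ and $K_{51200}$ remain open.
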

	\begin{proof}
		The fields $K_{1600}=\Q(\!\sqrt2,\sqrt5)$ and $K_{10816}=\Q(\!\sqrt2,\sqrt{13})$ are biquadratic and of class number $1$; in particular, all lattices over them are free. However, biquadratic fields do not admit any universal ternary classical free lattice by \cite{KTZ}.
		
		For $K=K_{2048}$, we claim that there is no ternary classical lattice which represents all of $1$, $p_2$, $p_{17}$ and $p_{17}'$ where the totally positive elements $p_2$, $p_{17}$ and $p_{17}'$ satisfy $\norm{K/\Q}{p_2}=2$, $\norm{K/\Q} {p_{17}}=\norm{K/\Q}{p_{17}'}=17$ and $p_{17}p_{17}'$ is not a square. (Up to multiplication by squares of units, this defines $p_2$ uniquely, but there are four distinct choices of $p_{17}$ and then three choices for $p_{17}'$.) In fact, it is enough to verify\footnote{Which we do not do here; but it boils down just to checking that $\omega^2 \preceq p_{17}$ or $\omega^2 \preceq p_2p_{17}$ implies $\omega=0$.} that the vectors representing $1$, $p_2$ and $p_{17}$ must be orthogonal. By applying automorphisms (since $K_{2048}$ is a Galois extension), we get that vectors representing $1$, $p_2$ and $p_{17}'$ must also be orthogonal, which then means that the vectors representing $p_{17}$ and $p_{17}'$ differ only by a scalar. This is a contradiction, as $p_{17}p_{17}'$ is not a square.
		
		For $K=K_{2624}$, we claim that no ternary classical lattice represents all of $1$, $\lambda$, $p_7$, $p_7'$, where the totally positive elements $p_{7}$ and $p_{7}'$ satisfy $\norm{K/\Q} {p_{7}}=\norm{K/\Q}{p_{7}'}=7$ and $p_{7}p_{7}'$ is not a square. (Up to multiplication by squares of units and switching $p_7$ with $p_7'$, this defines these elements uniquely.) In fact, again, one can prove that the vectors representing $1$, $\lambda$, $p_7$ must be orthogonal.
		
		For $K=K_{51200}$, we claim that no ternary classical lattice represents all of $1$, $\lambda$, $\alpha_{14}$, $\alpha_{14}'$, where the totally positive elements $\alpha_{14}$ and $\alpha_{14}'$ satisfy $\norm{K/\Q} {\alpha_{14}}=\norm{K/\Q} {\alpha_{14}'}=14$ and $\alpha_{14}\alpha_{14}'$ is not a square. (There are four choices for $\alpha_{14}$ up to multiplication by squares.) In fact, again, one can prove that the vectors representing $1$, $\lambda$, $\alpha_{14}$ must be orthogonal.
	\end{proof}
	
	This finally allows us to prove the main result.
	
	\begin{theorem}\label{th:QuarticKitaokaFreeLattices}
		Let $K$ be a totally real number field with $[K:\Q]=4$. Then there is no universal ternary classical free quadratic lattice over $K$.
	\end{theorem}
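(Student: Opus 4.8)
The plan is to assemble the results of this section into a single dichotomy. Because the theorem concerns an arbitrary totally real quartic field, I would first apply \cite[Thm.~1.2(A)]{KKK} to reduce to the case $\sqrt2\in K$; this is exactly the reduction announced at the start of the section, and it settles all quartic fields that do not contain $\sqrt2$. From here on I assume $\sqrt2\in K$ and argue by contradiction, supposing that $L$ is a universal ternary classical free lattice over $K$.

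Next I would peel off the five fields that resist the generic argument. If $K$ is one of $K_{1600}$, $K_{2048}$, $K_{2624}$, $K_{10816}$, $K_{51200}$, then Lemma~\ref{le:exceptional} already guarantees that no universal ternary classical lattice exists over $K$ at all, so in particular $L$ cannot exist. This step is a pure invocation; the work was done inside Lemma~\ref{le:exceptional}.

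For every remaining field the crucial point is that the LIRE / non-LIRE distinction is exhaustive: by definition $L$ either does or does not contain a sublattice $\OK\otimes L'$ with $L'$ universal ternary classical over $\Q(\!\sqrt2)$. In the first case $L$ is of type LIRE, and Proposition~\ref{pr:noLIRE} shows it is not universal; in the second case $L$ is of type non-LIRE, and since $K$ avoids the five excluded fields, Proposition~\ref{pr:nonLIRE} again shows it is not universal. Either way we reach a contradiction, so no such $L$ exists.

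The one point I would verify carefully is that the two case-analyses dovetail: the fields excluded in Proposition~\ref{pr:nonLIRE} are precisely those treated in Lemma~\ref{le:exceptional}, so together they cover every quartic field with $\sqrt2\in K$ with no gap. I do not expect any genuine obstacle at this final stage -- all the difficulty has been front-loaded into the non-LIRE analysis (culminating in Proposition~\ref{pr:nonLIRE-5lambda}) and the computer-assisted Lemma~\ref{le:solve1308fields}, so the theorem follows by combining the reduction from \cite{KKK}, Lemma~\ref{le:exceptional}, and Propositions~\ref{pr:noLIRE} and~\ref{pr:nonLIRE}.
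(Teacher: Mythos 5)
Your proposal is correct and follows essentially the same route as the paper: the paper's own proof is precisely the reduction to $\sqrt2\in K$ via \cite[Thm.~1.2]{KKK} followed by combining Proposition~\ref{pr:noLIRE}, Proposition~\ref{pr:nonLIRE}, and Lemma~\ref{le:exceptional}. Your added check that the fields excluded from Proposition~\ref{pr:nonLIRE} are exactly those covered by Lemma~\ref{le:exceptional} is the right point to verify, and it holds.
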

	\begin{proof}
		Invoking \cite[Thm.~1.2]{KKK}, we can assume $\sqrt2\in K$. Then we combine Proposition~\ref{pr:noLIRE}, Proposition~\ref{pr:nonLIRE} and Lemma~\ref{le:exceptional}.
	\end{proof}
	
	%---------------------------------------------------
	%======================================
	
	\section{Kitaoka's conjecture over fields where \texorpdfstring{$2$}{2} is not squarefree} \label{se:notSquarefree}
	
	In this short section, we study Kitaoka's conjecture for fields of arbitrary degree under the assumption that $2$ is not squarefree. One possible motivation for focusing on these fields is the fact that by \cite[Thm.~1.1]{KKK}, a universal ternary classical free lattice can only exist if $(2)$ ramifies. We also explicitly exclude fields where $2$ is a square, as they are studied in the rest of the paper. 
	
	Throughout this section, we consider the case when $\UKPlus=\UKctv$. Thus, for every nonzero $t \in \OK$ there exists $\eta_t \in \UK$ such that $\eta_t t \in \OKPlus$. The assumption that $2$ is not squarefree (i.e., it is divisible by some $t^2$ which is not a unit) yields $2=\gamma t^2$, and after possibly replacing $t$ by $\eta_tt$ and adjusting $\gamma$ accordingly, we can assume $\gamma,t \in \OKPlus$.
	
	\begin{lemma} \label{le:squarefree}
		Let $K \not\ni\sqrt2$ be a totally real number field with $\UKPlus=\UKctv$. Let $2=\gamma t^2$ for some $\gamma, t\in \OKPlus$. If there exists a universal ternary classical lattice over $K$, then $t$ or $\gamma t$ is a square.
	\end{lemma}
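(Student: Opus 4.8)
The plan is to peel off a unary summand and then show that the two numbers $t$ and $s:=\gamma t$, which turn out to be indecomposable, cannot both be fitted into the remaining binary part unless one of them is a square. First I record the arithmetic, writing $d=[K:\Q]$. Since $\gamma\in\OKPlus$ is a non-unit, $\norm{K/\Q}{\gamma}\geq 2$; from $\gamma t^2=2$ we get $\norm{K/\Q}{t}^2=2^d/\norm{K/\Q}{\gamma}\leq 2^{d-1}$, so $\norm{K/\Q}{t}<2^d$, and likewise $\norm{K/\Q}{s}=2^d/\norm{K/\Q}{t}\leq 2^{d-1}<2^d$ because $t$ is a totally positive non-unit. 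Hence both $t$ and $s=\gamma t$ are indecomposable by Lemma~\ref{le:smallAreIndecomposable}(a). Note also that $\gamma$ is a non-square: otherwise $2=(\sqrt{\gamma}\,t)^2$ would contradict $\sqrt2\notin K$.

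Let $L$ be a universal ternary classical lattice. As $1\in\UKPlus$ is represented, Lemma~\ref{le:UnitSplits} yields $L\simeq\qf1\perp L'$ with $L'$ binary classical and positive definite. Writing a representation $t=\omega^2+Q(\vv)$ of the indecomposable $t$ and using that a sum of two totally positive integers is never indecomposable, I get $\omega=0$ or $\vv=0$; thus $t$ is a square (and then we are done) or $t$ is represented by $L'$, and symmetrically for $s$. Now I exploit universality through the element $2$: its only decomposition into two totally positive integers is $1+1$, and $\omega^2=2$ is impossible, so any representation $2=\omega^2+Q(\vv)$ forces $L'$ to represent $1$ or $2$. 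If $L'$ represents the unit $1$, then by Lemma~\ref{le:UnitSplits} again $L\simeq\qf{1,1,c}$ for some $c$; the indecomposables $t$ and $s$ are then each of the form $\square$ or $c\,\square$, and if both were of the form $c\,\square$ their quotient $\gamma$ would be a square, which is impossible, so one of $t,\gamma t$ is a square, as claimed.

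It remains to treat the case where $L'$ represents $2$ but not $1$; here I may assume neither $t$ nor $s$ is a square, so by the dichotomy above $L'$ represents both, say $Q(\vu)=t$, $Q(\vw)=s$, while $Q(\f)=2$. The vectors $\vu,\vw$ are linearly independent (otherwise $s=c^2t$ for some $c\in K$, forcing $\gamma=c^2$, a square), so they form a $K$-basis of $KL'$, and the Cauchy--Schwarz inequality together with $\sqrt2\notin K$ forces $B_Q(\vu,\vw)\in\{0,\pm1\}$. A useful observation is that $L'$ represents no totally positive unit: if $Q(\vx)=1$, then projecting $\vu$ onto $\vx$ gives $t=B_Q(\vx,\vu)^2+Q\bigl(\vu-B_Q(\vx,\vu)\vx\bigr)$, and indecomposability of $t$ forces $\vx\perp\vu$, similarly $\vx\perp\vw$, whence $\vx=0$.

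To finish, I would use the identity $\gamma t^2=2$ as a clearing device: multiplying $Q$-relations by $t$ turns them into sum-of-two-squares identities (for instance $t\f-B_Q(\f,\vu)\vu$ is orthogonal to $\vu$ with norm $t\bigl(2t-B_Q(\f,\vu)^2\bigr)$), and combine this with $B_Q(\vu,\vw)\in\{0,\pm1\}$ and the classicality of $L'$ to exhibit a totally positive integer not represented by $L=\qf1\perp L'$. \emph{This last step is the main obstacle}: the constraints coming from representing $1$, $2$, $t$, $s$ individually are mutually consistent, so the contradiction must be wrung from a carefully chosen further value, or from a $2$-adic local analysis, which would sit naturally with the hypothesis that $2$ is not squarefree.
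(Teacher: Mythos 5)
There is a genuine gap, and you name it yourself: the case where $L'$ represents $2$ but not $1$, with $t$ and $s=\gamma t$ both nonsquare, ends with a plan rather than a proof. The missing idea is concrete: use the fact that $L$ must also represent $\gamma$ itself. Like $t$ and $\gamma t$, the element $\gamma$ is a totally positive nonsquare non-unit dividing $2$, hence indecomposable by Lemma~\ref{le:smallAreIndecomposable}, and your own dichotomy argument then puts it inside $L'$: writing $\gamma=\omega^2+Q(\vct{g})$ with $\vct{g}\in L'$, indecomposability forces $\omega=0$, since the alternative makes $\gamma$, and hence $2=\gamma t^2$, a square. Now finish as follows. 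Cauchy--Schwarz gives $B_Q(\vct{g},\vu)^2\preceq \gamma\cdot t$, and since $\gamma t$ is an indecomposable nonsquare, $\vct{g}\perp\vu$; thus $(\vct{g},\vu)$ is an orthogonal $K$-basis of the binary space $KL'$. Write $\vw=a\vct{g}+b\vu$ with $a,b\in K$. Then $B_Q(\vw,\vu)=bt\in\OK$ by classicality, and $(bt)^2\preceq Q(\vw)Q(\vu)=\gamma t\cdot t=2$, so $bt\in\{0,\pm1\}$ because $\sqrt2\notin K$. If $bt=0$, then $\gamma t=Q(\vw)=a^2\gamma$, so $t=a^2$; if $bt=\pm1$, then multiplying $\gamma t=Q(\vw)=a^2\gamma+b^2t$ by $t$ and using $\gamma t^2=2$ yields $2=a^2\gamma t+1$, so $\gamma t=(1/a)^2$. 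In both cases an element of $\OK$ is a square in $K$, hence in $\OK$ (its square root is integral over $\Z$ and lies in $K$), so $t$ or $\gamma t$ is a square, contradicting the standing assumptions of your final case.

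This rescue is exactly the engine of the paper's proof, packaged differently. The paper never splits off $\qf1$ and never invokes the element $2$: it takes four vectors representing $1$, $t$, $\gamma$, $\gamma t$, forces all their mutual inner products to vanish -- except the one between the vectors for $t$ and $\gamma t$, which lies in $\{0,\pm1\}$, and the one between the vectors for $\gamma$ and $\gamma t$, which stays free -- using Cauchy--Schwarz plus indecomposability and nonsquareness, and then uses that four vectors in a ternary lattice have singular Gram matrix; $\det G=0$, combined with $\gamma t^2=2$, gives that $t$ or $\gamma t$ is a square. Your route through Lemma~\ref{le:UnitSplits} is viable but costs you a case analysis and contains one inaccuracy: when $L'$ represents $1$, the splitting gives $L\simeq\qf{1,1}\perp L''$ with $L''$ classical unary but not necessarily \emph{free}, so you may not write $L\simeq\qf{1,1,c}$; the argument survives because the values of $L''=\ia\vv$ are still of the form $Q(\vv)\,\square$, but freeness should not be smuggled into a statement about lattices. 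Finally, whether the constraints from representing $1,2,t,s$ are \enquote{mutually consistent} is beside the point: universality hands you all of $\OKPlus$, and $\gamma$ is the element that does the work.
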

	\begin{proof}
		For the sake of contradiction, assume that $t\neq\square$ and $\gamma t \neq \square$. Observe that also $\gamma\neq\square$ since $2\neq\square$. Also, $t$ is not a unit, since all totally positive units are squares. Thus all three elements $\gamma$, $t$ and $\gamma t$ have norm strictly below $\norm{K/\Q}{\gamma t^2}=\norm{K/\Q}{2}=2^{[K:\Q]}$ and therefore they are indecomposable by Lemma~\ref{le:smallAreIndecomposable}.
		
		The universal lattice must represent $1$, $t$, $\gamma$ and $\gamma t$. The Gram matrix of the corresponding vectors is
		\[
		G=
		\begin{pmatrix}
			1 & \beta_{12} & \beta_{13} & \beta_{14}\\
			\beta_{12} & t & \beta_{23} & \beta_{24}\\
			\beta_{13} & \beta_{23} & \gamma & \beta_{34}\\
			\beta_{14} & \beta_{24} & \beta_{34} & \gamma t
		\end{pmatrix}
		\]
		for some $\beta_{ij}\in\OK$. Cauchy--Schwarz inequality yields $\beta_{12}^2 \preceq t$; since $t$ is indecomposable and nonsquare, we have $\beta_{12}=0$. For the same reason we get $\beta_{13}=\beta_{14}=\beta_{23}=0$, since $\gamma$ and $\gamma t$ are also nonsquare indecomposables. Further, $\beta_{24}^2 \preceq \gamma t^2 = 2$, so $\beta_{24}=0$ or $\beta_{24}=\pm1$; without loss of generality, we may assume $\beta_{24}=1$ in the latter case. Therefore, we have
		\[
		G=
		\begin{pmatrix}
			1 & 0 & 0 & 0\\
			0 & t & 0 & 0\\
			0 & 0 & \gamma & \beta\\
			0 & 0 & \beta & \gamma t
		\end{pmatrix}
		\qquad
		\text{or}
		\qquad
		G=
		\begin{pmatrix}
			1 & 0 & 0 & 0\\
			0 & t & 0 & 1\\
			0 & 0 & \gamma & \beta\\
			0 & 1 & \beta & \gamma t
		\end{pmatrix}.
		\]
		Since the lattice is ternary, $\det G=0$. In the former case, this yields $\gamma^2 t = \beta^2$, so $t$ is a square (in $K$ and thus also in $\OK$), a contradiction. In the latter case, one gets $\gamma^2t^2-\gamma-\beta^2t=0$, which can be rewritten as $2\gamma=\gamma+\beta^2t$, yielding $\gamma= \beta^2 t$, so $\gamma t$ is a square (also a contradiction).
	\end{proof}
	
	Note that the assumption $\UKPlus=\UKctv$ can be replaced by $t\notin\UK$, since it is only used at the beginning of the proof. However, as mentioned before, the condition $\UKPlus=\UKctv$ is necessary in all the applications below, because it allows us to assume $t \in\OKPlus$.
	
	The above lemma seems to be applicable in most fields not containing $\sqrt2$ where $2$ is not squarefree (see in Proposition~\ref{pr:2notSquarefreeDeg6} below for the strong corollary it yields for fields of degree $6$); we primarily use it to fully solve this situation for fields of degree $2^k$.
	
	\begin{proposition} \label{pr:2notSquarefreeDeg2power}
		Let $K$ be a totally real number field of degree $2^k, k\in\Z^+$, with $\UKPlus=\UKctv$. Assume that $\sqrt2\notin K$ and that $2$ is not squarefree in $K$. Then there is no universal ternary classical lattice over $K$.
	\end{proposition}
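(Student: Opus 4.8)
The plan is to argue by contradiction. Assume a universal ternary classical lattice exists over $K$, and fix a decomposition $2=\gamma t^2$ with $\gamma,t\in\OKPlus$ and $t\notin\UK$, which exists precisely because $2$ is not squarefree. The whole point is that under our standing hypotheses $\sqrt2\notin K$ and $\UKPlus=\UKctv$, Lemma~\ref{le:squarefree} applies to \emph{every} such \enquote{admissible} decomposition, telling us that $t$ or $\gamma t$ is a square. First I would extract the numerical data: since $[K:\Q]=2^k$, we have $\norm{K/\Q}{\gamma}\,\norm{K/\Q}{t}^2=\norm{K/\Q}{2}=2^{2^k}$, so $\norm{K/\Q}{t}=2^a$ for some integer $a$ with $1\le a\le 2^{k-1}$ (the upper bound because $\norm{K/\Q}{\gamma}=2^{2^k-2a}\ge1$; and if $a=2^{k-1}$ then $\gamma$ is a totally positive unit, hence a square, forcing $\sqrt2\in K$). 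The key translation of the lemma is that $a$ must be \emph{even}: if $t=\square$ then $\norm{K/\Q}{t}$ is a perfect square, while if $\gamma t=\square$ then $\norm{K/\Q}{\gamma t}=2^{2^k-a}$ is a perfect square, and in both cases evenness of $a$ follows.

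Next I would set up two \enquote{descent moves} that turn one admissible decomposition into another. If $t=s^2$ is a square, then $2=\gamma s^4$, and after replacing $s$ by a totally positive associate (possible since $\UKPlus=\UKctv$ yields units of every signature), we obtain an admissible decomposition $2=\gamma'(t')^2$ with $\norm{K/\Q}{t'}=2^{a/2}$. If instead $\gamma t=u^2$, then $2=u^2t$ gives, after a similar sign adjustment, an admissible decomposition $2=\gamma''(t'')^2$ with $\norm{K/\Q}{t''}=2^{\,2^{k-1}-a/2}$. Using $1\le a\le 2^{k-1}$ one checks that $t',t''\notin\UK$, so these new decompositions are genuinely admissible and the evenness statement (and more) applies to them as well.

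The heart of the argument is to iterate this. I claim that for every admissible decomposition, $2^{\,j}$ divides $a=v_2\bigl(\norm{K/\Q}{t}\bigr)$, for all $1\le j\le k$. The base case $j=1$ is the evenness above. For the inductive step, assume the claim for $j$ (with $j\le k-1$) and suppose some admissible decomposition has $2^{\,j}\mid a$ but $2^{\,j+1}\nmid a$. Applying Lemma~\ref{le:squarefree} and the appropriate move produces an admissible decomposition whose norm-exponent is either $a/2$ or $2^{k-1}-a/2$; since $2^{\,j}\mid 2^{k-1}$ for $j\le k-1$, the inductive hypothesis applied to this new decomposition forces $2^{\,j}\mid a/2$, i.e.\ $2^{\,j+1}\mid a$, in either case. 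This contradiction proves the claim for $j+1$. Taking $j=k$ gives $2^{k}\mid a$, which together with $1\le a\le 2^{k-1}<2^{k}$ is impossible; this final contradiction finishes the proof. (For $k=1$ only the base case is needed: $a$ even and $1\le a\le 2^{0}=1$ already clash.)

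I expect the main obstacle to be careful bookkeeping rather than any deep idea. Specifically, one must verify that the two moves really output decompositions with $\gamma',t'\in\OKPlus$ and $t'\notin\UK$ — this is exactly where $\UKPlus=\UKctv$ is used, both to pass to totally positive associates of $s$ and $u$ and to exclude $\gamma$ being a nonsquare totally positive unit — and one must confirm the divisibility $2^{\,j}\mid 2^{k-1}-a/2$ in the second move, which is what pins down the requirement $j\le k-1$. The delicate conceptual point is to phrase the inductive hypothesis uniformly over \emph{all} admissible decompositions, so that it is legitimately applicable to the decompositions freshly produced by the moves; once that is set up, the induction itself is very short.
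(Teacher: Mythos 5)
Your proposal is correct and takes essentially the same route as the paper: your two \enquote{descent moves} are exactly the content of the paper's Lemma~\ref{le:claim} (itself built on Lemma~\ref{le:squarefree}), and your induction showing $2^{\,j}\mid a$ for all $j\le k$ is a repackaging of the paper's infinite descent, in which each application of the moves decreases $v_2$ of the exponent by one until an impossible odd exponent appears. The bookkeeping you flag (total positivity of the new factors, non-unit checks, and the divisibility $2^{\,j}\mid 2^{k-1}-a/2$ for $j\le k-1$) matches the paper's verification, so there is no gap.
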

	
	A significant part of the proof is contained in the following lemma.
	
	\begin{lemma}\label{le:claim}
		Let $K\not\ni\sqrt2$ be a totally real number field of degree $d$ with $\UKPlus=\UKctv$. Assume that $K$ admits a universal ternary classical lattice. If $2$ can be written as $2=\gamma t^2$ for $\gamma,t \in \OKPlus$ with $\norm{K/\Q}{t}=2^j$, then it can also be written as $2=\tilde\gamma \tilde{t}^2$ for $\tilde\gamma,\tilde{t} \in \OKPlus$ with $\norm{K/\Q}{\tilde{t}} \in \{2^{j/2},2^{(d-j)/2}\}$.
		
		In particular, the exponent $j$ cannot be odd.
	\end{lemma}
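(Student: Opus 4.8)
The plan is to reduce everything to the dichotomy provided by Lemma~\ref{le:squarefree}. Taking norms in $2 = \gamma t^2$ gives $2^d = \norm{K/\Q}{\gamma}\cdot 2^{2j}$, hence $\norm{K/\Q}{\gamma} = 2^{d-2j}$; in particular $2j \le d$. Since $K$ admits a universal ternary classical lattice and $\UKPlus=\UKctv$, the hypotheses of Lemma~\ref{le:squarefree} are met, and it tells us that either $t$ or $\gamma t$ is a square in $K$. I would treat these two cases separately, and in each one \emph{unsquare} the appropriate element to produce the desired factorisation $2=\tilde\gamma\tilde t^2$.

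First suppose $t=s^2$ for some $s\in\OK$. Then $\norm{K/\Q}{s}^2 = 2^j$, so $\norm{K/\Q}{s}=2^{j/2}$. Rewriting $2 = \gamma t^2 = (\gamma t)\,s^2$ suggests taking $\tilde t = s$ and $\tilde\gamma = \gamma t$; the only issue is that $s$ need not be totally positive. Here is where $\UKPlus=\UKctv$ enters: there is a unit $\eta$ with $\tilde t := \eta s \succ 0$, and then $\tilde\gamma := \gamma t\,\eta^{-2}$ lies in $\OKPlus$, because $\gamma t\succ0$ and $\eta^{-2}$ is a square of a unit, hence totally positive. A direct check gives $\tilde\gamma\tilde t^2 = \gamma t\,\eta^{-2}\cdot\eta^2 s^2 = \gamma t\cdot t = 2$, while $\norm{K/\Q}{\tilde t}=2^{j/2}$. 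The case $\gamma t = r^2$ is symmetric: now $\norm{K/\Q}{r}^2 = \norm{K/\Q}{\gamma t} = 2^{d-2j}\cdot 2^{j}=2^{d-j}$, I rewrite $2 = \gamma t^2 = (\gamma t)\,t = r^2 t$, and after correcting the signature of $r$ by a unit as above I obtain $\tilde t := \eta r\in\OKPlus$ and $\tilde\gamma := t\,\eta^{-2}\in\OKPlus$ with $2=\tilde\gamma\tilde t^2$ and $\norm{K/\Q}{\tilde t}=2^{(d-j)/2}$. Together the two cases give $\norm{K/\Q}{\tilde t}\in\{2^{j/2},2^{(d-j)/2}\}$, as claimed.

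For the final assertion that $j$ is even I would argue by contradiction. If $j$ were odd, then $\norm{K/\Q}{t}=2^j$ is not a perfect square, so $t$ cannot be a square and the first case is excluded; Lemma~\ref{le:squarefree} then forces $\gamma t$ to be a square, whence $2^{d-j}=\norm{K/\Q}{\gamma t}$ is a perfect square and $d-j$ is even, i.e.\ $d$ is odd. But a universal ternary classical lattice can only exist over a field of even degree by Earnest--Khosravani \cite{EK}, so $d$ is even -- a contradiction. The one point requiring care, which I expect to be the main (if modest) obstacle, is the signature bookkeeping in the two cases: the square roots $s,r$ are determined only up to sign in each embedding, so one genuinely needs $\UKPlus=\UKctv$ to replace them by totally positive associates without destroying the identity $2=\tilde\gamma\tilde t^2$. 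Everything else is routine norm arithmetic together with the even-degree input of \cite{EK}.
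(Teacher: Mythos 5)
Your proof is correct and follows essentially the same route as the paper's: invoke Lemma~\ref{le:squarefree} to get that $t$ or $\gamma t$ is a square, unsquare it and fix the signature with a unit (using $\UKPlus=\UKctv$) to obtain $\tilde\gamma\tilde t^2=2$ with the claimed norm, and then combine the integrality of norms with the even-degree result of \cite{EK} to rule out odd $j$. The only differences are cosmetic (you compute $\norm{K/\Q}{\gamma}=2^{d-2j}$ explicitly and phrase the parity contradiction via excluding the first case, where the paper notes directly that neither $j/2$ nor $(d-j)/2$ is an integer), so nothing further is needed.
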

	\begin{proof}
		By Lemma~\ref{le:squarefree}, we know that $t$ or $\gamma t$ is a square. In the former case, write $t=\omega^2$. Let $\eta\in\UK$ such that $\eta\omega\in\OKPlus$. Then $2 = \gamma \omega^4 = \gamma\omega^2\eta^{-2} (\eta\omega)^2$. If we put $\tilde{t}=\eta\omega$, clearly $\norm{K/\Q}{\tilde{t}} = \sqrt{\norm{K/\Q}{t}} = 2^{j/2}$.
		
		In the latter case, write $\gamma t=\omega^2$ and let again $\eta\in\UK$ such that $\eta\omega\in\OKPlus$. Then $2 = t \omega^2 = t\eta^{-2} (\eta\omega)^2$. If we put $\tilde{t}=\eta\omega$, we get $\norm{K/\Q}{\tilde{t}} = \sqrt{\norm{K/\Q}{\gamma t}} = \sqrt{\norm{K/\Q}{2/t}} = 2^{(d-j)/2}$.
		
		Finally, by \cite{EK}, the existence of a universal ternary lattice implies that the degree $d$ is even. Thus, if $j$ were odd, then neither $j/2$ nor $(d-j)/2$ would be an integer, which is a contradiction. 
	\end{proof}
	
	Now, to prove the proposition, we repeatedly apply this lemma in a field of degree $2^k$.
	
	\begin{proof}[Proof of Proposition~\ref{pr:2notSquarefreeDeg2power}]
		By assumption, $2=\gamma t^2$ for $\gamma\in\OKPlus$ and $t\in\OK\setminus \UK$. Note that $\gamma$ is also not a unit, since otherwise $2$ would be a square due to $\UKPlus=\UKctv$. After possibly replacing $t$ by $\eta_t t$ and $\gamma$ by $\gamma \eta_t^{-2}$, we can assume $t\in\OKPlus$.
		
		For the sake of contradiction, assume that there exists a universal ternary classical lattice over $K$. We have $2=\gamma t^2$ for $\gamma,t\in\OKPlus$ where $\norm{K/\Q}{t} = 2^j$ with $0<j<d/2$. (The values $j=0$ and $j=d/2$ are impossible, as neither $t$ nor $\gamma$ are units.) This inequality is impossible if $d=2^1$. Thus $k\geq 2$, and Lemma \ref{le:claim} will now allow us to use infinite descent to get a contradiction.
		
		First observe that if $j$ is odd, then Lemma \ref{le:claim} yields an immediate contradiction. On the other hand, we will show that if $t$ is replaced by $\tilde{t}$, then the $2$-valuation of $j$ decreases. Indeed: Since $j < d/2=2^{k-1}$, we have $v_2(j)<v_2(d/2)$ and thus $v_2(d/2 - j/2) = v_2(j/2) = v_2(j) - 1$. This allows us to decrease $v_2(j)$ step by step, ultimately getting an odd $j$ and thus a contradiction.
	\end{proof}
	
	We also show the corollary of Lemma~\ref{le:squarefree} for fields of degree $6$:
	
	\begin{proposition} \label{pr:2notSquarefreeDeg6}
		Let $K$ be a totally real number field of degree $6$ with $\UKPlus=\UKctv$. Assume that $\sqrt2\notin K$ and that $2$ is not squarefree in $K$. If $K$ admits a universal ternary classical lattice, then $2=\eta\alpha^3$ for some $\eta\in\UK$, $\alpha\in\OK$. 
	\end{proposition}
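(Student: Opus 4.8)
The plan is to iterate Lemmas~\ref{le:squarefree} and~\ref{le:claim} to build an infinite chain of factorizations of $2$ whose \enquote{square roots} stabilize, and then read off the cube from the valuations at the primes above $2$.

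First I would fix a factorization $2 = \gamma t^2$ with $\gamma, t \in \OKPlus$, both non-units (exactly as in the paragraph preceding Lemma~\ref{le:squarefree}: $t$ is a non-unit since $2$ is not squarefree, and $\gamma$ is a non-unit because otherwise $\UKPlus=\UKctv$ would force $2=\square$, contradicting $\sqrt2\notin K$). Since $\norm{K/\Q}{t}^2$ divides $\norm{K/\Q}{2}=2^6$ and $t$ is a non-unit, we have $\norm{K/\Q}{t}=2^j$ with $j\in\{1,2,3\}$. By Lemma~\ref{le:claim} the exponent $j$ cannot be odd, so $j=2$, and consequently $\norm{K/\Q}{\gamma}=2^{6-4}=4$ as well. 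This is the point where the degree $6$ enters decisively: it pins down $\norm{K/\Q}{t}=\norm{K/\Q}{\gamma}=4$.

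Next I would set up the iteration. Suppose $2 = a_n a_{n+1}^2$ up to a square of a unit, with $a_n,a_{n+1}\in\OKPlus$ of norm $4$; this holds for $(a_0,a_1)=(\gamma,t)$. Lemma~\ref{le:squarefree} gives two cases. If $a_{n+1}$ is a square, say $a_{n+1}=\omega^2$, then $2=(a_n\omega^2)\omega^2$ is a factorization with $\norm{K/\Q}{\omega}=2$, i.e.\ an odd exponent $j=1$, contradicting Lemma~\ref{le:claim}; hence this branch never occurs. Therefore $a_n a_{n+1}$ is a square, and choosing a totally positive square root (possible since $\UKPlus=\UKctv$) I define $a_{n+2}\in\OKPlus$ with $a_{n+2}^2=\mu^2 a_n a_{n+1}$ for some $\mu\in\UK$. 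A short check gives $\norm{K/\Q}{a_{n+2}}=4$ and $2=a_{n+1}a_{n+2}^2$ up to a square of a unit, so the hypotheses are reproduced and the process yields an infinite sequence $(a_n)_{n\geq0}$ in $\OKPlus$, each of norm $4$. Finally I would analyse valuations. Since $\norm{K/\Q}{a_n}=4$ is a power of $2$, each $a_n$ is divisible only by the primes $\ip_1,\dots,\ip_r$ above $2$. Writing $e_i=v_{\ip_i}(2)$ and $x_n=v_{\ip_i}(a_n)$, the relation $2=a_n a_{n+1}^2$ (valid up to a unit square, which does not affect valuations) gives the affine recurrence $x_n+2x_{n+1}=e_i$, i.e.\ $x_{n+1}=(e_i-x_n)/2$, with attracting fixed point $e_i/3$ and ratio $-\tfrac12$. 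Thus the integer sequence $x_n$ converges to $e_i/3$, is eventually constant equal to $e_i/3$, and in particular $3\mid e_i$. Hence for all large $n$ we get $(a_n)=\prod_i\ip_i^{e_i/3}$, so $(a_n)^3=\prod_i\ip_i^{e_i}=(2)$, and $\eta:=2/a_n^3\in\UK$ yields $2=\eta\,a_n^3$ with $\alpha=a_n$.

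The main obstacle is organising the iteration so that it genuinely continues forever: one must check at each step that the square branch of Lemma~\ref{le:squarefree} is the only admissible one (the other being excluded by Lemma~\ref{le:claim}) and that both the norm equal to $4$ and the factorization $2=a_n a_{n+1}^2$ are reproduced modulo squares of units. This bookkeeping of unit-square ambiguities is harmless for the valuation argument but must be tracked carefully; the convergence of the integer sequence $x_n$ to $e_i/3$ is then the clean mechanism that forces $3\mid e_i$ at every prime above $2$ and hence delivers the cube.
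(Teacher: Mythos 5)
Your proof is correct, and while it shares its first half with the paper's, it closes by a genuinely different mechanism. Both arguments begin identically: Lemma~\ref{le:claim} forces $\norm{K/\Q}{t}=\norm{K/\Q}{\gamma}=2^2$, and the branch of Lemma~\ref{le:squarefree} in which $t$ (respectively $a_{n+1}$) is a square is excluded because it would create a factorization of $2$ with odd exponent $j=1$. The paper then finishes finitely: from $\gamma t=\omega^2$ it analyses the possible ideal factorizations of the two norm-$4$ ideals, showing that either $(\gamma)=(t)$ or $(\gamma)=\ip^2$, $(t)=\mathfrak{q}^2$ for distinct primes $\ip,\mathfrak{q}$, and it kills the second case by applying Lemma~\ref{le:squarefree} a second time to $2=t\omega^2$ (neither $\omega$ nor $t\omega$ can then be a square, by the shape of their ideals); this yields $(2)=(\gamma)^3$ at once. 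You instead iterate Lemma~\ref{le:squarefree} indefinitely and let the valuation recurrence $x_n+2x_{n+1}=e_i$ finish the job; note that it gives even more than \enquote{eventually constant}: $3x_n-e_i=(-1/2)^n(3x_0-e_i)$ is an integer for all $n$, hence zero, so $x_n=e_i/3$ for \emph{every} $n$, and in particular $(\gamma)=(t)$, which is exactly the paper's key intermediate claim. Your route buys independence from the case analysis of norm-$4$ ideal shapes, and it is in the same infinite-descent spirit as the paper's own proof of Proposition~\ref{pr:2notSquarefreeDeg2power} for degree-$2^k$ fields; what it costs is the unit-square and total-positivity bookkeeping you flag, which does go through because $\UKPlus=\UKctv$ supplies units of all signatures. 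The paper's argument is shorter; yours is more mechanical and, since it never uses the particular shapes of the ideals involved, closer in form to something that could attack the general claim $2=\eta\alpha^k$ suggested at the end of Section~\ref{se:notSquarefree}.
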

	\begin{proof}
		We can again assume that $2=\gamma t^2$ for $\gamma, t \in \OKPlus$, and since neither $\gamma$ nor $t$ is a unit, $\norm{K/\Q}{t} = 2^1$ or $2^2$. The former case is impossible, since by Lemma \ref{le:claim}, the exponent is even. 
		
		Thus $2=\gamma t^2$ with $\norm{K/\Q}{\gamma}=\norm{K/\Q}{t} = 2^2$. By Lemma~\ref{le:squarefree}, $t$ or $\gamma t$ is a square. We shall now prove that $t\neq \square$. Indeed, if $t=\omega^2$, it would allow us to write $2=\gamma\omega^4=\gamma\omega^2\eta_{\omega}^{-2}(\eta_{\omega}\omega)^2=\tilde{\gamma}\tilde{t}^2$ with $\norm{K/\Q}{\tilde{t}} = 2^1$; and we already proved that this is impossible.
		
		Thus $t\neq\square$, so the only remaining case is $\gamma t = \omega^2$. We now aim to prove that $(\gamma)=(t)$ as ideals. Assume not. Then, due to their norm and the fact that the ideal $(\gamma t)$ is a square, the only possibility is $(\gamma) = \ip^2$ and $(t)=\mathfrak{q}^2$ for distinct prime ideals $\ip$, $\mathfrak{q}$. This gives (after possibly replacing $\omega$ by $\eta\omega$ as usual) that $2=t\omega^2$ for $(t)=\mathfrak{q}^2$ and $(\omega)=\ip\mathfrak{q}$. Clearly, neither $\omega$ nor $t\omega$ are squares; this contradicts Lemma~\ref{le:squarefree}.
		
		Therefore $(\gamma)=(t)$ and $(2)=(\gamma)^3$. This proves the claim.
	\end{proof}
	
	Using the same technique for fields of degree $10$, one can prove $2=\eta\alpha^5$. This suggests that the following might be obtainable as a corollary of Lemma~\ref{le:squarefree}: \emph{Let $K$ be a totally real number field with $\UKPlus=\UKctv$ such that $2$ is not squarefree. If $K$ admits a universal ternary classical lattice, then $2=\eta\alpha^k$ for some $\eta\in\UK$, $\alpha\in\OK$, $k\geq2$.} Of course, if $k$ is even, this just recovers the case $\sqrt2\in K$ which was explicitly excluded in our formulations so far; if we forbid $2=\square$, then $k\geq 3$ will be an odd divisor of $[K:\Q]$.

	%===========================================
	
	\section{Notes on Explicit Kitaoka's Conjecture} \label{se:NotesConj}
	
	In the introduction, we proposed Conjecture~\ref{co:KitaokaStrong}, namely that the only fields admitting a universal ternary classical lattice are $\Q(\!\sqrt{n})$ for $n=2,3,5$. This is of course supported by Theorem~\ref{th:QuarticKitaoka2}; but let us now look at some evidence that this conjecture holds for all lattices, not just for free ones.
	
	For quadratic fields, the full, lattice version of this conjecture has been proven only recently in \cite[Thm.~7.3]{KK+}. For quartic fields, we have the following partial result.
	
	\begin{proposition} \label{pr:quartic250k}
		Let $K$ be a totally real number field of degree $4$ with $\disc K \leq 250\,000$. Then there is no universal ternary classical lattice over $K$.
	\end{proposition}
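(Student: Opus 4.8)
The plan is to upgrade the free-lattice result to arbitrary lattices at the cost of a discriminant bound, by isolating the genuinely new phenomenon --- non-free lattices --- and disposing of it through a finite computation. Theorem~\ref{th:QuarticKitaokaFreeLattices} already excludes a universal ternary classical \emph{free} lattice over \emph{every} totally real quartic field, so the only thing left to do is to rule out non-free ones. Since a non-free $\OK$-lattice can exist only when $h(K)>1$, I may assume $h(K)>1$ throughout. Using the reduction of \cite[Thm.~1.2]{KKK} I concentrate on the case $\sqrt2\in K$ (the finitely many quartic fields with $\sqrt2\notin K$ and $\disc K\le 250\,000$ being absorbed into the finite check below), and then Proposition~\ref{pr:KKK} lets me assume $\UKPlus=\UKctv$, since otherwise no universal ternary classical lattice exists at all.

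The next step is to observe that most of the work in the previous section was already carried out for \emph{arbitrary} classical lattices, not merely free ones. Indeed, Proposition~\ref{pr:nonLIRE} together with Lemma~\ref{le:exceptional} excludes every universal ternary classical lattice of type non-LIRE over a quartic field $K\ni\sqrt2$, and Proposition~\ref{pr:L2} does the same for lattices of type LIRE containing $L_2$, relying only on the unimodularity of $L_2$. Consequently the sole surviving possibility is a \emph{non-free} classical overlattice of $L_1$, $L_3$ or $L_3'$.

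To pin these down I would use the dual-lattice description. For $L_1\simeq\qf{1,1,2+\sqrt2}$, every classical overlattice splits off $\qf{1,1}$ exactly as in the proof of Lemma~\ref{le:nutneI3}, so it is governed by a classical overlattice of the unary lattice $\qf{2+\sqrt2}$; by Lemma~\ref{le:DualLattice} these correspond to ideals $\ia$ with $\ia^2\subset(2+\sqrt2)\OK$. For $L_3$, and symmetrically for its conjugate $L_3'$, Proposition~\ref{pr:L0Summary} reduces the question to ideals $\ia$ with $\ia^2\subset p_7\OK$, resp.\ $\ia^2\subset p_7'\OK$, where $p_7=5+3\sqrt2$ and $p_7'=5-3\sqrt2$. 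In every case a non-free overlattice forces a \emph{non-principal} ideal $\ia$ whose square divides $2+\sqrt2$, resp.\ $5\pm3\sqrt2$; in particular it requires $h(K)>1$ together with a very specific splitting, namely that the prime of norm $2$, resp.\ $7$, ramifies in $K/\Q(\!\sqrt2)$ into a non-principal prime.

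Finally I would make the problem finite. Using \cite{LMFDB} I list all totally real quartic fields $K\ni\sqrt2$ with $\disc K\le 250\,000$, $h(K)>1$ and $\UKPlus=\UKctv$; the smallest of these is $K_{51200}$, which has already been dealt with in Lemma~\ref{le:exceptional}. For each field on this short list I rule out a universal non-free overlattice of $L_1$, $L_3$ or $L_3'$ by the very device used in Lemma~\ref{le:exceptional}: I exhibit four totally positive elements --- typically $1$, $2+\sqrt2$, and a conjugate pair of equal norm whose product is a nonsquare --- for which any classical lattice representing all four must have rank at least $4$, contradicting ternarity. The hard part is precisely this last step: whereas the non-LIRE case was settled uniformly via condition \eqref{eq:SmallCondition}, the non-free LIRE overlattices must be excluded field by field, and it is exactly the bound $\disc K\le 250\,000$ that keeps the list of fields with $h(K)>1$ finite. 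As already noted in the proof of Lemma~\ref{le:solve1308fields}, one should also keep in mind that LMFDB records fields only up to isomorphism; this causes no difficulty here, because the isomorphism of $\Q(\!\sqrt2)$ swapping $2+\sqrt2$ and $2-\sqrt2$ preserves $\Q(\!\sqrt2)$ and merely interchanges the roles of $L_3$ and $L_3'$.
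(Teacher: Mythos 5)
Your treatment of the fields containing $\sqrt2$ is correct and is genuinely different from the paper's proof: the paper makes no use of the LIRE machinery here, whereas you exploit the fact that Proposition~\ref{pr:nonLIRE}, Lemma~\ref{le:exceptional} and Proposition~\ref{pr:L2} hold for arbitrary (not necessarily free) lattices, so that after Theorem~\ref{th:QuarticKitaokaFreeLattices} only \emph{non-free} classical overlattices of $L_1$, $L_3$, $L_3'$ survive; your description of these via non-principal ideals $\ia$ with $\ia^2\subset(2+\sqrt2)\OK$, resp.\ $\ia^2\subset p_7\OK$, $p_7'\OK$, is accurate and would shrink the computational work considerably compared with the paper.

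The genuine gap is the case $\sqrt2\notin K$, $h(K)>1$. You invoke \cite[Thm.~1.2]{KKK} to pass to $\sqrt2\in K$, but in this paper that reduction is only ever applied to \emph{free} lattices (it is exactly how Theorem~\ref{th:QuarticKitaokaFreeLattices} is proved), and the paper explicitly cautions that parts of \cite{KKK} require the restricted setting of quadratic forms; for non-free lattices no such reduction is available. Your declared fallback --- absorbing these fields into the finite check --- is inconsistent with the check you actually describe: you list only fields $K\ni\sqrt2$, your structural reduction (LIRE versus non-LIRE, overlattices of $L_1$, $L_3$, $L_3'$) is meaningless without $\sqrt2$, and your proposed test elements ($1$, $2+\sqrt2$, a conjugate pair) do not exist in such fields. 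Worse, for $\sqrt2\notin K$ you cannot use Proposition~\ref{pr:KKK} to force $\UKPlus=\UKctv$, so fields with nonsquare totally positive units would also remain open. The paper closes exactly this hole by a different argument valid for all quartic fields and all lattices: $\abs{\UKPlus/\UKctv}\geq4$ is impossible by \cite[Prop.~2.5]{KKK}, and $\abs{\UKPlus/\UKctv}=2$ forces $2\OKPlus\subset\sum\square$ by \cite[Thm.~3.1]{KKK}, which by \cite{KY-EvenBetter} only happens for $\Q(\!\sqrt2,\sqrt5)$ and $\Q(\zeta_{20}^{}+\zeta_{20}^{-1})$, both of class number $1$; it then checks computationally all $36$ quartic fields with $h(K)>1$, $\UKPlus=\UKctv$ and $\disc K\leq 250\,000$, with or without $\sqrt2$. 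This omitted case is not marginal: since $K_{51200}$ is the quartic field of smallest discriminant containing $\sqrt2$ with $h(K)>1$, every quartic field with $h(K)>1$ and discriminant below $51\,200$ lies entirely outside your argument, and such fields make up much of the list the paper must check.
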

	
	In other words, this stronger version of Kitaoka's conjecture holds for the $2672$ quartic fields with the smallest discriminant. Since the exceptional fields tend to be those with a small discriminant (among real quadratic fields, a universal ternary classical lattice exists exactly for the three with the least discriminant), this indicates the validity of this conjecture for all quartic fields.
	
	\begin{proof}[Proof of Proposition~\ref{pr:quartic250k}]
		First, note that if $h(K)=1$, then there is no universal ternary classical lattice over $K$ by Theorem~\ref{th:QuarticKitaoka2}. Second, the results of \cite{KKK} show that a quartic field with a universal ternary lattice must have $\UKPlus=\UKctv$. The reason is as follows: $\abs{\UKPlus/\UKctv} \geq 4$ is clearly impossible (\cite[Prop.~2.5]{KKK}); and if $\abs{\UKPlus/\UKctv} =2$, then $2\OKPlus \subset \sum\square$ by \cite[Thm.~3.1]{KKK}, but by \cite{KY-EvenBetter}, this only allows $\Q(\!\sqrt2,\sqrt5)$ and $\Q(\zeta_{20}+\zeta_{20}^{-1})$ which have class number $1$.
		
		So the only fields to be handled are those where $h(K)>1$ and $\UKPlus=\UKctv$. There are only $36$ such quartic fields with $\disc K \leq 250\,000$, and we have tested them all with a simple script in Magma and found that they do not admit a universal ternary lattice.
		
		In fact, in all $36$ cases we found four elements $\alpha_1,\alpha_2,\alpha_3,\gamma\in\OKPlus$ with the following properties: 
		\begin{enumerate}[(1)]
			\item If $Q(\vv_i)=\alpha_i$ in a classical lattice, then $\vv_1,\vv_2,\vv_3$ are orthogonal vectors. \label{en:1}
			\item $\gamma$ is not represented by the dual of $\qf{\alpha_1,\alpha_2,\alpha_3}$.
		\end{enumerate} 
		Together, this implies that these four elements cannot be simultaneously represented by a ternary classical lattice. Both of these properties are easily checked on a computer; in particular, property~\ref{en:1} is equivalent to checking that $\omega^2 \preceq \alpha_i\alpha_j$ implies $\omega=0$.
	\end{proof}
	
	\medskip
	
	Let us move from quartic fields to fields of higher degree. The main results supporting Kitaoka's conjecture are contained in \cite{EK,KY-weak,KKK} as we already discussed. Some conclusions of the present paper hold for fields of arbitrary degree, in particular Propositions~\ref{pr:nonLIRE-5lambda} and~\ref{pr:nonLIRE-free} for fields containing $\sqrt2$ and all of Section~\ref{se:notSquarefree} for fields where $2$ is not squarefree.
	
	Also, recall that Kala--Yatsyna \cite{KY-EvenBetter} studied Kitaoka's conjecture for fields of the form $\Q(\zeta_k^{}+\zeta_k^{-1})$ where $k$ is a prime or a power of $2$. Let us conclude by announcing that, drawing on the results both from \cite{KKK} and the present paper, we fully solved this case -- in a separate article, we shall prove the following: \emph{The field $\Q(\zeta_k^{}+\zeta_k^{-1})$ admits a universal ternary classical quadratic form if and only if it is $\Q(\!\sqrt2)$, $\Q(\!\sqrt3)$ or $\Q(\!\sqrt5)$.} Since all the known exceptional fields belong to this class, we consider it quite a convincing piece of evidence towards Conjecture~\ref{co:KitaokaStrong}.

\end{document}